\newcommand{\Sy}{\mathrm{Sym}(\Omega)}
\newcommand{\aut}{\mathrm{Aut}(T_d)}
\newcommand{\Aut}{\mathrm{Aut}}
\newcommand{\sub}{\ensuremath{\operatorname{Sub}}}%
\newcommand{\urs}{\ensuremath{\operatorname{URS}}}
\newcommand{\fix}{\ensuremath{\mathrm{Fix}}}%
\renewcommand{\H}{\ensuremath{\mathcal{H}}}
\newcommand{\K}{\ensuremath{\mathcal{K}}}%
\newcommand{\A}{\ensuremath{\mathcal{A}}}%
\newcommand{\X}{\ensuremath{\mathcal{X}}}%
\newcommand{\mon}{\ensuremath{\mathrm{Mon}}}%
\newcommand{\env}{\ensuremath{\mathrm{Env}}}%
\newcommand{\envA}{\ensuremath{\mathrm{Env}(\mathcal{A}_\Gamma)}}%
\newcommand{\prob}{\ensuremath{\mathrm{Prob}}}%
\title[Amenable URS's and lattice embeddings]{Amenable uniformly recurrent subgroups and lattice embeddings}
\author{Adrien Le Boudec}
\thanks{This work was carried out when the author was F.R.S.-FNRS Postdoctoral Researcher. Current affiliation: CNRS, UMPA - ENS Lyon. Partially supported by ANR-14-CE25-0004 GAMME}
\address{UCLouvain, IRMP,	Chemin du Cyclotron 2, 1348 Louvain-la-Neuve, Belgium}
\address{CNRS, Unité de Mathématiques Pures et Appliquées,	ENS-Lyon, France}
\email{adrien.le-boudec@ens-lyon.fr}
\date{January 23, 2020}
\theoremstyle{plain}
\newtheorem{thm}{Theorem}[section]
\newtheorem{prop}[thm]{Proposition}
\newtheorem{cor}[thm]{Corollary}
\newtheorem{lem}[thm]{Lemma}
\newtheorem*{thm-intro}{Theorem}
\theoremstyle{definition}
\newtheorem{defi}[thm]{Definition}
\newtheorem{quest}[thm]{Question}
\newtheorem{ex}[thm]{Example}
\newtheorem{rmq}[thm]{Remark}
\begin{document}

\maketitle

\begin{abstract}
We study lattice embeddings for the class of countable groups $\Gamma$ defined by the property that the largest amenable uniformly recurrent subgroup $\A_\Gamma$ is continuous. When $\A_\Gamma$ comes from an extremely proximal action and the envelope of $\A_\Gamma$ is co-amenable in $\Gamma$, we obtain restrictions on the locally compact groups $G$ that contain a copy of $\Gamma$ as a lattice, notably regarding normal subgroups of $G$, product decompositions of $G$, and more generally dense mappings from $G$ to a product of locally compact groups.

\bigskip
\noindent \textbf{Keywords.} Lattices, locally compact groups, strongly proximal actions, Chabauty space, uniformly recurrent subgroups, groups acting on trees.
\end{abstract}



\section{Introduction}

The questions considered in this article fall into the setting of the following general problem: given a (class of) countable group $\Gamma$, study the locally compact groups $G$ such that $\Gamma$ embeds as a lattice in $G$, i.e.\ such that $\Gamma$ sits as a discrete subgroup of $G$ and $G / \Gamma$ carries a $G$-invariant probability measure.

Malcev showed that every finitely generated torsion free nilpotent group embeds as a cocompact lattice in a unique simply connected nilpotent Lie group \cite[Ch.\ II]{Raghu-dis-sb}. Conversely if $G$ is a locally compact group with a finitely generated nilpotent lattice, then after modding out by a compact normal subgroup, the identity component $G^0$ is a Lie group of polynomial growth (these have been characterized in \cite{Guiv-LiePol,Jen-LiePol}) and $G / G^0$ is finitely generated and virtually nilpotent. This statement is a combination of several works. First if $G$ has a finitely generated nilpotent lattice $\Gamma$, then $\Gamma$ is necessarily cocompact in $G$. Since $\Gamma$ is virtually torsion free this is a classical fact when $G$ is totally disconnected, and the general case can be deduced from \cite[Prop.\ 3.7]{BenQui-Sadic} (which uses notably the solution of Hilbert's fifth problem \cite{MontZip}). In particular $G$ is compactly generated with polynomial growth, and the statement then follows from the generalization of Gromov's polynomial growth theorem for locally compact groups \cite{Los-PG}.

Beyond the nilpotent case, examples of classifications of embeddings of $\Gamma$ as a cocompact lattice have been obtained by Dymarz in \cite{Dym-env} for several families of examples of solvable groups $\Gamma$. Although not directly related to our concerns, we also mention that a certain dual problem was considered by Bader--Caprace--Gelander--Mozes in \cite{BCGM-lat-am} for the class of amenable groups.

Outside the setting of amenable groups, Furman addressed the above problem for the class of lattices $\Gamma$ in semi-simple Lie groups in \cite{Fur-MMTarget}, improving rigidity results of Mostow, Prasad, Margulis (see the references in \cite{Fur-MMTarget}; see also Furstenberg \cite{Furst-poiss-env}). In \cite{BFS-env}, Bader--Furman--Sauer considered a large class of countable groups $\Gamma$ defined by certain group theoretic conditions, and established, given a lattice embedding of $\Gamma$ in $G$, a general arithmeticity result in the setting where the connected component of $G$ is non-compact.

In this article we consider the class of groups whose Furstenberg uniformly recurrent subgroup is continuous, and we address the question of how the properties of the Furstenberg uniformly recurrent subgroup of a countable group $\Gamma$ influence the locally compact groups into which $\Gamma$ embeds as a lattice. Below we explain the above terminology and include a discussion on this class of groups. Let us also mention at this point that examples of lattice embeddings for groups within this class are described in \cite{LB-irr-wreath} (see Remark \ref{rmq-short}). We encourage the reader to read \cite{LB-irr-wreath}, that may be considered as a companion article of the present work, in the sense that the examples from \cite{LB-irr-wreath} may be viewed as motivating examples for the problems that we address here.

\subsection*{The groups under consideration}

For a countable group $\Gamma$, the Chabauty space $\sub(\Gamma)$ of all subgroups of $\Gamma$ is a compact space, on which $\Gamma$ acts by conjugation. A uniformly recurrent subgroup (URS) of $\Gamma$ is a closed minimal $\Gamma$-invariant subset of $\sub(\Gamma)$ \cite{Gla-Wei}. Glasner and Weiss showed that every minimal action of $\Gamma$ on a compact space $X$ gives rise to a URS (see Proposition \ref{prop-GW-1.2}), called the stabilizer URS associated to the action. Conversely every URS arises as the stabilizer URS of a minimal action (see Matte Bon--Tsankov \cite{MB-Ts-realizing}, and Elek \cite{Elek-urs} in the case of finitely generated groups).

URS's have been shown to be related to the study of ideals in reduced group $C^\ast$-algebras \cite{KK,Kenn} and reduced crossed products \cite{kawabe-urs}. URS's of several classes of groups have been studied in \cite{LBMB}. For certain examples of groups $\Gamma$, rigidity results about minimal actions on compact spaces have been obtained in \cite{LBMB} from a complete description of the space $\urs(\Gamma)$. Various results about homomorphisms between topological full groups of étale groupoids, notably obstructions involving invariants of the groupoids, have been obtained in \cite{MB-urs-full} via URS's considerations (more precisely via a complete description of the points in the Chabauty space of these groups whose orbit does not approach the trivial subgroup). In the present article we will make use of URS's as a tool in order to study lattice embeddings for a class of countable groups that we now define.

A URS is amenable if it consists of amenable subgroups. Every countable group $\Gamma$ admits a largest amenable URS $\A_\Gamma$ (with respect to a natural partial order on $\urs(\Gamma)$, see \S \ref{subsec-gen-urs}), which is the stabilizer URS associated to the action of $\Gamma$ on its Furstenberg boundary (see \S \ref{subsec-top-bnd} for definitions). The URS $\A_\Gamma$ is called the Furstenberg URS of $\Gamma$. $\A_\Gamma$ is either a point, in which case we have $\A_\Gamma = \left\{\mathrm{Rad}(\Gamma)\right\}$, where $\mathrm{Rad}(\Gamma)$ is the amenable radical of $\Gamma$, or homeomorphic to a Cantor space. In this last case we say that $\A_\Gamma$ is \textbf{continuous}. We refer to \cite{LBMB} for a more detailed discussion.

Let $(\mathcal{C})$ denote the class of groups $\Gamma$ for which the Furstenberg URS $\A_\Gamma$ is continuous. Equivalently, a group $\Gamma$ belongs to $(\mathcal{C})$ if and only if $\Gamma$ admits an amenable URS whose envelope is not amenable (see below for the definition of the envelope). The class $(\mathcal{C})$ is disjoint from all classes of groups previously mentioned in the introduction. More precisely, the class $(\mathcal{C})$ is disjoint from the class of amenable groups, the class of linear groups \cite{BKKO}, and also from other classes of groups specifically considered in \cite{BFS-env}, such as groups with non-vanishing $\ell^2$-Betti numbers \cite{BKKO} or acylindrically hyperbolic groups (see \cite[Th.\ 7.19]{DGO} and \cite[Th.\ 1.4]{BKKO}). Examples of groups in $(\mathcal{C})$ include groups acting on trees with all end stabilizers amenable and non-trivial (see the discussion after Corollary \ref{cor-intro-tree-disc} for explicit examples). The class $(\mathcal{C})$ is stable under taking quotient by an amenable normal subgroup and extension by an amenable group \cite[Prop.\ 2.20]{LBMB}. Also if $\Gamma$ has a normal subgroup that is in $(\mathcal{C})$, then $\Gamma$ belongs to $(\mathcal{C})$ \cite[Prop.\ 2.24]{LBMB}. By a result of Breuillard--Kalantar--Kennedy--Ozawa, the complement of the class $(\mathcal{C})$ is also stable under extensions (see \cite[Prop.\ 2.24]{LBMB}).

The study of this class of groups is also motivated by the work of Kalantar--Kennedy \cite{KK}, who showed the following characterization: a countable group $\Gamma$ belongs to $(\mathcal{C})$ if and only if the group $\Gamma / \mathrm{Rad}(\Gamma)$ has a reduced $C^\ast$-algebra that is not simple. For an introduction and the historical developments of the problem of $C^\ast$-simplicity, we refer to the survey of de la Harpe \cite{dlH-survey}.

\subsection*{Topological boundaries}

We will make use of the notion of topological boundary in the sense of Furstenberg. These are compact spaces with a minimal and strongly proximal group action (see \S \ref{subsec-top-bnd} for definitions). Many different notions of boundaries appear in the study of groups and group actions. What is now sometimes called \enquote{boundary theory} is particularly well described in the introduction of \cite{BF-icm}. We insist that in the present article the term \textit{boundary} will always refer to a topological boundary in the above sense. This notion should not be confused with any of the measured notions of boundaries. In particular, despite the possibly confusing terminology, the maximal topological boundary, called the Furstenberg boundary, is not the same notion as the measured notion of Poisson--Furstenberg boundary (for which we refer to \cite{Erschler-PF} for a recent survey).

\subsection*{Lattices and direct products}

Special attention will be given to products of locally compact groups. The study of lattices in product groups is motivated (among other things) by its connections with the theory of lattices in semi-simple Lie groups, its rich geometric aspects, as well as the instances of groups with rare properties appearing in this setting. We refer to the literature (see \cite{Margulis-book,Wise-phd,Bur-Moz-cras,BM-IHES-2,Remy-KM-cras,Sha-commens-inv,Bur-Mon-bound-coho-rigid,Ratta-phd,Mon-Sha-cocy,Bad-Sha-NST,Ca-Re-inv,CaMo-KM,Radu-prod-trees}) for developments over the last years on the study of lattices in products of locally compact groups.

Given a countable group $\Gamma$ with a continuous Furstenberg URS and a group $G$ containing $\Gamma$ as a lattice, we are interested in understanding how close the group $G$ can be from a direct product of two groups, or which properties the group $G$ can share with a direct product. Of course various notions of closeness can be considered. The most basic one is to ask whether the group $G$ admits non-trivial decompositions as a direct product. One step further, one might consider quotient morphisms from $G$ onto direct products of groups. In Theorems \ref{thm-intro-URS-lattice-prod} and \ref{thmintro-normal-coamenable-lc} below we more generally consider continuous morphisms with dense image from $G$ to a direct product of groups $G \rightarrow G_1 \times G_2$. We make no assumption about injectivity of these maps or injectivity of the composition with the projection to one factor $G_i$. In particular this setting allows maps of the form $G \rightarrow G/N_1 \times G/N_2$ for closed normal subgroups $N_1,N_2$ such that $N_1 N_2$ is dense in $G$.

\subsection*{Results}

A central notion in this article is the one of extremely proximal action. Minimal and extremely proximal actions naturally arise in geometric group theory, and are boundaries in the sense of Furstenberg. We refer to \S \ref{subsec-EP} for definitions and examples. We say that the Furstenberg URS $\mathcal{A}_\Gamma$ of a countable group $\Gamma$ comes from an extremely proximal action if there exists a compact space $Z$ and a $\Gamma$-action on $Z$ that is minimal and extremely proximal, whose associated stabilizer URS is equal to $\mathcal{A}_\Gamma$. Note that typically $Z$ will not be the Furstenberg boundary of $\Gamma$. If $\H$ is a URS of $\Gamma$, the \textbf{envelope} $\env(\H)$ of $\H$ is by definition the subgroup of $\Gamma$ generated by all the subgroups $H \in \H$.

\begin{thm} \label{thm-intro-URS-lattice-prod}
Let $\Gamma$ be a countable group whose Furstenberg URS comes from a faithful and extremely proximal action, and let $G$ be a locally compact group containing $\Gamma$ as a lattice. The following hold:
\begin{enumerate}[label=(\alph*)]
\item \label{item-intro-env-tf} Assume that $\envA$ is finitely generated and co-amenable in $\Gamma$. Then $G$ cannot be a direct product $G = G_1 \times G_2$ of two non-compact groups.
\item \label{item-intro-env-f-i} Assume that $\envA$ has finite index in $\Gamma$ and finite abelianization. Then any continuous morphism with dense image from $G$ to a product of locally compact groups $G \rightarrow G_1 \times G_2$ is such that one factor $G_i$ is compact.
\end{enumerate}
\end{thm}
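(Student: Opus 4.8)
The plan is to separate the purely dynamical content from the structure theory of the ambient group $G$. Write $N=\envA$; since $\A_\Gamma$ is a URS it is a conjugation-invariant subset of $\sub(\Gamma)$, so the subgroup $N$ generated by its elements is normal in $\Gamma$, and co-amenability of $N$ gives that $\Gamma/N$ is amenable. Because $\A_\Gamma$ is continuous, $Z$ is infinite, the stabilizers occurring in $\A_\Gamma$ are non-trivial, and the given action $\Gamma\acts Z$ is a faithful $\Gamma$-boundary (minimal and strongly proximal). The dynamical heart I would isolate is the following rigidity statement: if $A,B\trianglelefteq\Gamma$ commute and $A$ acts non-trivially on $Z$, then $B$ acts trivially, hence $B=1$ by faithfulness. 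To prove it, recall that a normal subgroup acting on the boundary $Z$ either acts trivially or acts minimally and strongly proximally; thus a non-trivial $A$ makes $Z$ an $A$-boundary. Each $b\in B$ then defines an $A$-equivariant homeomorphism of $Z$, and the centraliser of a minimal strongly proximal action in $\mathrm{Homeo}(Z)$ is trivial: if $b$ commutes with $A$ and is fixed-point-free (as it must be, $\fix(b)$ being $A$-invariant), applying strong proximality to $\frac12(\delta_y+\delta_{b(y)})$ and using $g_n b(y)=b(g_n y)$ forces a fixed point, a contradiction. Hence two commuting normal subgroups of $\Gamma$ can never both act non-trivially on $Z$.

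The second ingredient, which I expect to be the genuine obstacle, is a structural dictionary between the closed normal subgroups of $G$ and the $\Gamma$-action on $Z$: for a closed normal subgroup $M\trianglelefteq G$, either $M$ is compact, or $M\cap\Gamma$ is non-trivial and acts non-trivially (as a boundary) on $Z$. This is precisely the step that must rule out irreducible-lattice behaviour, since a priori a non-compact normal $M$ could meet $\Gamma$ trivially, as happens for irreducible lattices in semisimple groups; it is here that the hypotheses beyond the bare boundary structure are consumed. I would prove it, when $M$ is non-compact, by showing that $M\cap\Gamma$ is a lattice in $M$ (this is where finite generation of $N$ enters, to control $\Gamma M$) and then that $M\cap\Gamma$ cannot act trivially without contradicting faithfulness together with co-amenability of the envelope. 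The role of finite generation, and later of finite abelianization, is exactly to exclude the \enquote{transverse abelian} phenomenon, where a non-compact normal subgroup behaves like an irrational line in a torus and stays invisible in $\Gamma$; such a configuration can only occur for groups outside the class $(\mathcal{C})$, consistent with our standing hypotheses.

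Granting this engine, part \ref{item-intro-env-tf} is immediate. Suppose $G=G_1\times G_2$ with both factors non-compact. Then $G_1,G_2$ are non-compact closed normal subgroups, so by the engine $A:=\Gamma\cap G_1$ and $B:=\Gamma\cap G_2$ both act non-trivially on $Z$. But $G_1$ and $G_2$ commute in the direct product, hence so do $A$ and $B$, contradicting the rigidity statement of the first paragraph. Therefore one factor is compact.

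For part \ref{item-intro-env-f-i}, given a continuous dense morphism $\varphi:G\to G_1\times G_2$, set $q_i=p_i\circ\varphi$ and $M_i=\ker q_i\trianglelefteq G$. If some $M_i$ is non-compact I apply the engine to obtain that $A:=\Gamma\cap M_1$ and $B:=\Gamma\cap M_2$ act non-trivially on $Z$; the new difficulty, absent from part \ref{item-intro-env-tf}, is that $A$ and $B$ commute only modulo $\Gamma\cap\ker\varphi$, since $[M_1,M_2]\subseteq M_1\cap M_2=\ker\varphi$, so the entire commutation defect is pushed into $\ker\varphi$. Here I would use the finite index of $N$ to reduce to the envelope and finite abelianization of $N$ to force $\Gamma\cap\ker\varphi$, and in particular $[A,B]$, to act trivially on $Z$, so that the images of $A$ and $B$ genuinely commute in $\mathrm{Homeo}(Z)$ and the rigidity statement applies. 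The complementary possibility, where one $M_i$ is compact, corresponds to $\varphi$ being (up to compact kernel) the graph of a dense homomorphism onto the other factor; excluding a non-compact such image is the main additional obstacle, and finite abelianization of $N$ is exactly what is strong enough to rule out the resulting abelian slippage. In all cases one factor $G_i$ is forced to be compact. The technical crux throughout remains the engine of the second paragraph, together with the verification in part \ref{item-intro-env-f-i} that finite abelianization neutralises the commutator defect carried by $\ker\varphi$.
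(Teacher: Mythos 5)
Your first paragraph is sound and corresponds to material the paper actually proves: by extreme proximality a subgroup normalized by $\Gamma$ acts either trivially or minimally and strongly proximally (Lemma \ref{lem-normal-minimal}), and non-trivial normal subgroups of $\Gamma$ have trivial centralizer (Proposition \ref{prop-core-deriv0}\ref{item-N-centr-triv}). The problem is your second paragraph. The ``engine'' --- for a closed normal $M\trianglelefteq G$, either $M$ is compact or $M\cap\Gamma$ is non-trivial, is a lattice in $M$, and acts non-trivially on $Z$ --- is precisely the irreducible-lattice obstruction you yourself identify, and you do not prove it; you only assert that finite generation of $\env(\A_\Gamma)$ ``controls $\Gamma M$'' and ``excludes the transverse abelian phenomenon.'' There is no mechanism offered by which finite generation of a normal subgroup of $\Gamma$ forces $\Gamma\cap G_1$ to be a lattice in $G_1$, and no such statement appears in the paper. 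For an irreducible lattice in a product both intersections are trivial, and nothing in your sketch rules this configuration out. Since parts (a) and (b) both reduce to this engine in your plan, the proof has a genuine gap at its central step. The treatment of part (b) compounds this: ``finite abelianization is exactly what is strong enough to rule out the resulting abelian slippage'' is a restatement of the goal, not an argument.

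For comparison, the paper's route is quite different and avoids ever claiming that non-compact normal subgroups of $G$ meet $\Gamma$. Step one is boundary-theoretic: the hypotheses (amenable URS from an extremely proximal action, co-amenable envelope) make $\Gamma$ boundary indivisible (Proposition \ref{prop-A_G-EP}), this property passes from the lattice $\Gamma$ up to $G$ via Furstenberg's finite-covolume lemma (Proposition \ref{prop-furst}), and boundary indivisibility of $G$ forces one factor of any dense image in $G_1\times G_2$ to be \emph{amenable} (Theorem \ref{thm-G-no-disj-bnd}). Step two upgrades amenable to compact: $\Gamma$ admits no injective homomorphism into an amenable locally compact group with Lie identity component (Corollary \ref{cor-epntf-embed-amen}, proved via commensurated subgroups, wandering open sets, wreath-product subgroups and Malcev's theorem), so the projection to the amenable factor kills the monolith $[\Gamma^0,\Gamma^0]$; the hypotheses on the envelope (finite generation in case (a), finite index plus finite abelianization in case (b)) are then consumed in showing that the kernel of this projection is cocompact --- via the quasi-center and Lemma \ref{lem-reduc-fac-cp} in case (a), and via the monolith being a finite-index lattice in case (b). If you want to salvage your outline, you would need to either prove your engine (which I believe is not available at this level of generality) or switch to the amenability-first strategy.
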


This result has applications to the setting of groups acting on trees, see Corollary \ref{cor-intro-tree-disc}. We make several comments about the theorem:

\begin{enumerate}[itemsep=1.7ex, leftmargin = 0.7cm, label=\arabic*)]

\item We do not assume that $\Gamma$ is finitely generated, nor that $G$ is compactly generated. For statement \ref{item-intro-env-tf}, The assumption that $\envA$ if finitely generated admits variations, see Theorem \ref{thm-URS-lattice-prod}.

\item Making an assumption on the \enquote{size} of the envelope of $\A_\Gamma$ with respect to $\Gamma$ is natural, in the sense that in general there is no hope to derive any conclusion on the entire group $\Gamma$ if this envelope is too small. An extreme illustration of this is that there are groups $\Gamma$ whose Furstenberg URS comes from a faithful and extremely proximal action but is trivial, and these can be lattices in products, e.g.\ $\mathrm{PSL}(2,\mathbb{Z}[1/p])$ inside $\mathrm{PSL}(2,\mathbb{R}) \times \mathrm{PSL}(2,\mathbb{Q}_p)$ (see also the discussion right after Corollary \ref{cor-intro-tree-disc}).

\item Under the assumption that $\envA$ is co-amenable in $\Gamma$, the fact that the Furstenberg URS $\A_\Gamma$ comes from a faithful and extremely proximal action is equivalent to asking that the action of $\Gamma$ on $\A_\Gamma$ is faithful and extremely proximal; see Remark \ref{rmq-reform-comes-from-EP}. This provides an intrinsic reformulation of the assumption not appealing to any auxiliary space.

\item For $\Gamma$ as in the theorem, the assumption in statement \ref{item-intro-env-f-i} that $\envA$ has finite index in $\Gamma$ and $\envA$ has finite abelianization is equivalent to $\Gamma$ being virtually simple (see Proposition \ref{prop-core-deriv0}).

\end{enumerate}


\medskip

The URS approach to study lattice embeddings allows to consider more generally subgroups of finite covolume. Recall that a closed subgroup $H$ of a locally compact group $G$ has \textbf{finite covolume} in $G$ if $G/H$ carries a $G$-invariant probability measure. Thus a lattice is a discrete subgroup of finite covolume. Before stating the following result we need some terminology.

Recall the notion of disjointness introduced by Furstenberg in \cite{Furst-disjoint}. If $X,Y$ are compact $G$-spaces, $X$ and $Y$ are disjoint if whenever $\Omega$ is a compact $G$-space and $\Omega \rightarrow X$ and $\Omega \rightarrow Y$ are continuous equivariant surjective maps, the map $\Omega \rightarrow X \times Y$ that makes the natural diagram commute remains surjective (see \S \ref{subsec-presc-urs}). When $X,Y$ are minimal $G$-spaces, this is equivalent to asking that the diagonal $G$-action on the product $X \times Y$ is minimal.

Consider the following property: \textit{two non-trivial $G$-boundaries are never disjoint}. A group with this property will be called \textbf{boundary indivisible}. Glasner characterized minimal compact $G$-spaces which are disjoint from all $G$-boundaries as those carrying a fully supported measure whose orbit closure in the space of probability measures is minimal \cite[Th.\ 6.2]{Glas-compress}. The relation between disjointness and boundaries that we consider here is of different spirit, as it deals with disjointness within the class of $G$-boundaries, rather than disjointness from this class. Locally compact groups with a cocompact amenable maximal subgroup are examples of boundary indivisible groups \cite[Prop.\ 4.4]{Furst-bd-th}. On the contrary, many discrete groups are not boundary indivisible. The relevance of this property in our setting comes from the fact that, as we will show in Proposition \ref{prop-A_G-EP}, a discrete group $\Gamma$ as in Theorem \ref{thm-intro-URS-lattice-prod} is boundary indivisible. Actually the only examples of (non-amenable) boundary indivisible discrete groups that we are aware of fall into the setting of Proposition \ref{prop-A_G-EP}.

Recall that a convex compact $G$-space is irreducible if it does not contain any proper closed convex $G$-invariant subspace. We say that a subgroup $L$ of a topological group $G$ is \textbf{weakly co-amenable} in $G$ if whenever $Q$ is a non-trivial convex compact $G$-space in which $L$ fixes a point, $Q$ is not irreducible. This is indeed a weakening of the notion of co-amenability \footnote[2]{and not a relative version of a notion of weak amenability.}, which asks that every convex compact $G$-space $Q$ with $L$-fixed points has $G$-fixed points \cite{Eymard-moy} (and hence $Q$ is not irreducible, unless trivial). If $G$ has a subgroup that is both amenable and weakly co-amenable, then $G$ is amenable; and a \textit{normal} weakly co-amenable subgroup is co-amenable. However in general weak co-amenability does not imply co-amenability, even for discrete groups. In \S \ref{subsec-bnd-g(f,f')} we exhibit examples of finitely generated groups such that every subgroup is either amenable or weakly co-amenable, but having non-amenable subgroups that are not co-amenable.

Finally we say that a subgroup $L \leq G$ is \textbf{boundary-minimal} if there exists a non-trivial $G$-boundary on which $L$ acts minimally. We refer to \S \ref{subsec-normal-bound} for context and examples.

\begin{thm} \label{thmintro-normal-coamenable-lc}
Let $H$ be a locally compact group with an amenable URS that comes from an extremely proximal action, and whose envelope is co-amenable in $H$. Let $G$ be a locally compact group containing $H$ as a closed subgroup of finite covolume. Then $G$ is boundary indivisible, and the following hold:
\begin{enumerate}[label=(\alph*)]
\item \label{item-intr-N-am-coam} Every closed normal subgroup of $G$ is either amenable or co-amenable.
\item \label{item-intr-urs-wco} If $L$ is a boundary-minimal subgroup of $G$, and $L$ is uniformly recurrent, then $L$ is weakly co-amenable in $G$. 
\end{enumerate}
\end{thm}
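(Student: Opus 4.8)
The plan is to deduce all three assertions from the single structural fact that $G$ is boundary indivisible, and to obtain that fact by transporting the corresponding property of $H$ across the inclusion. First I would record that a closed subgroup of finite covolume is co-amenable (an invariant probability measure on $G/H$ is in particular an invariant mean), and that the restriction of any $G$-boundary to a co-amenable closed subgroup is again a boundary, since minimality and strong proximality are both inherited by co-amenable subgroups. Granting this, suppose $X$ and $Y$ are non-trivial $G$-boundaries with $X \times Y$ minimal; then $X \times Y$ is itself a $G$-boundary, so its restriction to $H$ is an $H$-boundary and in particular $H$-minimal, which says precisely that $X$ and $Y$ are disjoint as (still non-trivial) $H$-boundaries. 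Since $H$ satisfies the hypotheses of Proposition \ref{prop-A_G-EP}, it is boundary indivisible, and this contradiction shows that $G$ is boundary indivisible.

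For part \ref{item-intr-N-am-coam}, let $N \trianglelefteq G$ be closed and assume it is neither amenable nor co-amenable; I would derive a contradiction by exhibiting two disjoint non-trivial $G$-boundaries. Since $G/N$ is non-amenable, its Furstenberg boundary inflates to a non-trivial $G$-boundary $X = \partial_F(G/N)$ on which $N$ acts trivially. Since $N$ is non-amenable, $\partial_F N$ is non-trivial, and the conjugation homomorphism $G \to \Aut(N)$ together with the functoriality of the Furstenberg boundary equips $Y = \partial_F N$ with a $G$-action extending the canonical one; as $N \acts Y$ is already minimal and strongly proximal, $Y$ is a non-trivial $G$-boundary on which $N$ acts minimally. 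A direct orbit-closure computation then shows $X$ and $Y$ are disjoint: the $N$-orbit closure of $(x,y)$ is $\{x\} \times Y$, and sweeping by $G$ using minimality on $X$ fills up $X \times Y$. This contradicts boundary indivisibility, so $N$ is amenable or co-amenable.

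For part \ref{item-intr-urs-wco}, let $W$ be a non-trivial $G$-boundary on which $L$ acts minimally, and suppose towards a contradiction that $L$ fixes a point $q_0$ in some non-trivial irreducible convex compact $G$-space $Q$. By Glasner's theory the unique minimal subset $\Sigma$ of $Q$ is a non-trivial $G$-boundary with $Q = \overline{\mathrm{conv}}\,\Sigma$. Using that $L$ is uniformly recurrent, I would promote the single fixed point to an equivariant datum over the URS $\H_L = \overline{\{gLg^{-1}\}} \subseteq \sub(G)$: each $L' \in \H_L$ has non-empty compact convex fixed-point set $Q^{L'}$, the field $L' \mapsto Q^{L'}$ is upper semicontinuous and $G$-equivariant, and a minimal closed $G$-invariant subset of $\{(L',q) : q \in Q^{L'}\}$ projects onto $\H_L$ and onto a minimal subset of $Q$, necessarily $\Sigma$. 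This yields, for every $\xi \in \Sigma$, some $L' \in \H_L$ fixing $\xi$. Now invoke boundary indivisibility: the join $M = W \vee \Sigma$ is a proper closed invariant subset of $W \times \Sigma$. For each $\xi \in \Sigma$ the fibre of $M$ over $\xi$ is invariant under a subgroup $L'$ fixing $\xi$ and acting minimally on $W$, hence projects onto all of $W$, so $W \times \{\xi\} \subseteq M$; ranging over $\xi$ forces $M = W \times \Sigma$, a contradiction.

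I expect part \ref{item-intr-urs-wco} to be the main obstacle, for two reasons. The first is making the passage from $Q$ to $\Sigma$ genuinely equivariant over the whole Chabauty orbit closure $\H_L$: one must control the upper semicontinuity of the fixed-point assignment and extract a correspondence that surjects onto $\Sigma$ rather than onto some smaller minimal subset. The second, more delicate, point is the interplay between minimality on $W$ and the members of $\H_L$ fixing a given $\xi \in \Sigma$: minimality on $W$ holds for honest conjugates of $L$ but is not a closed condition in $\sub(G)$, so the argument must be arranged so that the subgroups actually used to fill the fibres of $M$ retain minimality on $W$ — and this is exactly where the hypothesis that $L$ is \emph{uniformly recurrent}, rather than merely boundary-minimal, has to enter. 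By contrast, boundary indivisibility of $G$ and part \ref{item-intr-N-am-coam} are comparatively formal once the restriction-of-boundaries lemma and the functoriality of $\partial_F N$ are in place.
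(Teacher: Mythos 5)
Your overall architecture matches the paper's (transport boundary indivisibility from $H$ to $G$, then derive (a) and (b) from disjointness arguments), but each step rests on an unjustified claim, and the first two are where the real content of the theorem lies. For boundary indivisibility you assert that a $G$-boundary restricts to an $H$-boundary because $H$ is co-amenable, ``since minimality and strong proximality are both inherited by co-amenable subgroups.'' Co-amenability only yields that $H$ fixes no probability measure on a non-trivial $G$-boundary (Propositions \ref{prop-wco-elem} and \ref{prop-reform-w-coam}); it does not give minimality, nor strong proximality, of the restricted action, and no such inheritance holds, or is even known, at that level of generality. The paper needs the full strength of finite covolume here: Proposition \ref{prop-furst} (Furstenberg) proves the restriction statement by an argument that uses the $G$-invariant probability measure $m_{G/H}$ on $G/H$ in an essential, quantitative way. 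This is the main external input to the theorem and cannot be waved through. Similarly, in part \ref{item-intr-N-am-coam} your appeal to ``functoriality of the Furstenberg boundary'' to make $\partial_{sp} N$ a $G$-boundary ignores the topology: for non-discrete $G$ the conjugation map $G \rightarrow \mathrm{Aut}(N)$ gives an action on $\partial_{sp} N$ by homeomorphisms, but joint continuity of $G \times \partial_{sp} N \rightarrow \partial_{sp} N$ is not automatic. This is precisely the content of Theorem \ref{thm-normal-nonam-Gbnd}, whose proof passes through $G/\mathrm{Rad}(G)$, the Burger--Monod decomposition into $S \times G_{td}$, and a commensurated open subgroup $O$ containing $N$ cocompactly, so that the extended action on $\partial_{sp} O$ is continuous because $O$ is open. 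Your argument is fine for discrete $G$, but the theorem concerns general locally compact $G$.

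For part \ref{item-intr-urs-wco} your plan is the right idea (produce a member of the URS fixing a point of the second boundary and combine with minimality on the first to contradict boundary indivisibility), but the difficulty you yourself flag at the end --- that the subgroups produced by your minimal-subset extraction need not act minimally on $W$ --- is real and is left unresolved in your sketch. The paper's Proposition \ref{prop-no-disj-wcoam} avoids it: since ``fixing a point in $Y$'' is a closed $G$-invariant condition on subgroups and the URS $\overline{L^G}$ is minimal, once some limit of conjugates of $L$ fixes a point of $Y$ (obtained from the fixed measure via strong proximality and Lemma \ref{lem-fp-Xmin}), $L$ itself fixes some $y \in Y$; then the single stabilizer $G_y \supseteq L$ acts minimally on the first boundary and Lemma \ref{lem-triv-disj} applies. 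You do not need a fixing subgroup over every point of $\Sigma$, only over one. So part \ref{item-intr-urs-wco} is repairable along your lines, but your first two paragraphs substitute the genuinely hard steps (Proposition \ref{prop-furst} and Theorem \ref{thm-normal-nonam-Gbnd}) with assertions that do not follow from the definitions.
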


Again we make several comments:

\begin{enumerate}[itemsep=1.7ex, leftmargin = 0.7cm, label=\arabic*)]

\item The group $H$ is allowed to be discrete, so the theorem applies for all groups $\Gamma$ as in Theorem \ref{thm-intro-URS-lattice-prod}. While boundary indivisibility of $G$ will be an intermediate step in the proof of Theorem \ref{thm-intro-URS-lattice-prod}, statements \ref{item-intr-N-am-coam} and \ref{item-intr-urs-wco} provide additional information that is rather independent of the conclusion of Theorem \ref{thm-intro-URS-lattice-prod}.

\item  \ref{item-intr-N-am-coam} will be deduced from \ref{item-intr-urs-wco}, as we will prove that any non-amenable normal subgroup is boundary-minimal (see Theorem \ref{thm-normal-nonam-Gbnd}).

\item If $\mathrm{Rad}(G)$ denotes the amenable radical of $G$, statement \ref{item-intr-N-am-coam} could be equivalently stated saying that $G/\mathrm{Rad}(G)$ is a just non-amenable group, i.e.\ $G/\mathrm{Rad}(G)$ is non-amenable and every proper quotient is amenable.

\item \label{item-cooment-intro-rad} Theorem \ref{thmintro-normal-coamenable-lc} does not say anything about amenable normal subgroups of $G$. It is worst pointing out that, as illustrated by the examples discussed in \cite{LB-irr-wreath}, it happens that a discrete group $\Gamma$ satisfying the assumptions of Theorem \ref{thmintro-normal-coamenable-lc} and with trivial amenable radical, sits as a lattice in a group $G$ with non-compact (e.g.\ infinite discrete) amenable radical.

\item \label{item-cooment-intro-coam} Remark \ref{rmq-urs-wco-not-co} below provides counter-examples showing that in statement \ref{item-intr-urs-wco} the conclusion cannot be strengthened by saying that $L$ is co-amenable in $G$.

\end{enumerate}

We view the above remarks \ref{item-cooment-intro-rad}-\ref{item-cooment-intro-coam} as illustrations of the limitations of the use of topological boundaries and URS's to the problem addressed here in the rather abstract setting of Theorem \ref{thmintro-normal-coamenable-lc}.

\medskip

Group actions on trees are a natural source of extremely proximal actions, and Theorems \ref{thm-intro-URS-lattice-prod} and \ref{thmintro-normal-coamenable-lc} find applications in this setting. In the following statement $T$ is a locally finite simplicial tree.

\begin{cor} \label{cor-intro-tree-disc}
Let $\Gamma \leq \mathrm{Aut}(T)$ be a countable group having no proper invariant subtree and no finite orbit in $T \cup \partial T$. Assume that $\Gamma_\xi$ is non-trivial and amenable for all $\xi \in \partial T$; and $\Gamma$ is virtually simple. If $G$ is a locally compact group containing $\Gamma$ as a lattice, then:
\begin{enumerate}[label=(\alph*)]
\item any continuous morphism with dense image $G \rightarrow G_1 \times G_2$ is such that one factor $G_i$ is compact. In particular $G$ itself cannot be a direct product of two non-compact groups.
\item Every closed normal subgroup of $G$ is either amenable or co-amenable.
\item If $L$ is a boundary-minimal subgroup of $G$, and $L$ is uniformly recurrent, then $L$ is weakly co-amenable in $G$. 
\end{enumerate}
\end{cor}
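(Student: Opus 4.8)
The plan is to deduce all three statements from Theorems \ref{thm-intro-URS-lattice-prod} and \ref{thmintro-normal-coamenable-lc} by checking that $\Gamma$ satisfies their hypotheses. Concretely, I would establish that the boundary action $\Gamma \acts \partial T$ is faithful, minimal and extremely proximal, that its associated stabilizer URS coincides with the Furstenberg URS $\A_\Gamma$, and that $\envA$ has finite index in $\Gamma$. Granting this, part (a) follows from Theorem \ref{thm-intro-URS-lattice-prod}\ref{item-intro-env-f-i} (the finite-index and finite-abelianization hypothesis being equivalent to virtual simplicity by Proposition \ref{prop-core-deriv0}), while parts (b) and (c) are exactly statements \ref{item-intr-N-am-coam} and \ref{item-intr-urs-wco} of Theorem \ref{thmintro-normal-coamenable-lc} applied with $H = \Gamma$, which is legitimate since $\Gamma$ is allowed to be discrete.

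First I would analyze the action on the tree. The absence of a finite orbit in $T \cup \partial T$ rules out the cases where $\Gamma$ fixes a vertex, inverts an edge, fixes an end, or preserves a bi-infinite geodesic; hence $\Gamma$ is of general type and contains hyperbolic elements with no common endpoint. Since $\Gamma$ has no proper invariant subtree, $T$ is its unique minimal subtree and therefore coincides with the convex hull of $\partial T$. The standard north--south dynamics of hyperbolic tree automorphisms then show that $\Gamma \acts \partial T$ is minimal and extremely proximal: given a proper compact set $C \subsetneq \partial T$ and a nonempty open set $U$, a high power of a suitable hyperbolic element whose repelling endpoint avoids $C$ carries $C$ into $U$. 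Faithfulness of $\Gamma \acts \partial T$ is immediate: an automorphism fixing $\partial T$ pointwise fixes every bi-infinite geodesic, hence all of the convex hull $T$, so it is trivial in $\Aut(T)$.

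Next I would identify the Furstenberg URS. As $\Gamma \acts \partial T$ is a faithful, minimal, extremely proximal (hence strongly proximal) action, the Glasner--Weiss construction (Proposition \ref{prop-GW-1.2}) attaches to it a stabilizer URS $\H$; extreme proximality guarantees that its members are exactly the end stabilizers $\Gamma_\xi$, which are amenable by hypothesis, so $\H$ is an amenable URS coming from a faithful extremely proximal action. Its envelope $\env(\H) = \langle \Gamma_\xi : \xi \in \partial T \rangle$ is normal in $\Gamma$ (because $\H$ is conjugation-invariant) and infinite: a nontrivial finite normal subgroup would have a nonempty fixed subtree, which is $\Gamma$-invariant and hence equal to all of $T$, forcing it to act trivially and contradicting faithfulness, while its end stabilizers are nontrivial so $\env(\H) \neq 1$. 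Virtual simplicity of $\Gamma$ then forces the infinite normal subgroup $\env(\H)$ to have finite index, so it is co-amenable. With the envelope co-amenable, the results relating extremely proximal actions to the Furstenberg URS (Proposition \ref{prop-A_G-EP} and Remark \ref{rmq-reform-comes-from-EP}), together with the maximality of $\A_\Gamma$ among amenable URS's (which already gives $\H \preceq \A_\Gamma$), identify $\H$ with $\A_\Gamma$; in particular $\A_\Gamma$ comes from a faithful extremely proximal action and $\envA = \env(\H)$ has finite index in $\Gamma$.

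It then remains to feed these facts into the two theorems as in the first paragraph: the finite-index, finite-abelianization condition required by Theorem \ref{thm-intro-URS-lattice-prod}\ref{item-intro-env-f-i} is supplied by Proposition \ref{prop-core-deriv0} from virtual simplicity, yielding (a) (the direct-product statement being the special case $G_1 \times G_2 = G$), and the hypotheses of Theorem \ref{thmintro-normal-coamenable-lc} hold verbatim for $H = \Gamma$, yielding (b) and (c). I expect the delicate point to be the URS identification of the third paragraph: one must verify that the stabilizer URS of the boundary action consists precisely of the amenable end stabilizers rather than of possibly larger Chabauty limits, and that co-amenability of the envelope genuinely upgrades the inequality $\H \preceq \A_\Gamma$ coming from maximality into an equality. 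By contrast, the tree dynamics and the final applications of the theorems are routine once this identification is secured.
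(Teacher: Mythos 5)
Your proposal is correct and follows essentially the same route as the paper: verify that $\partial T$ is a faithful, minimal, extremely proximal $\Gamma$-space realizing $\A_\Gamma$, translate virtual simplicity into the finite-index/finite-abelianization condition on $\envA$ via Proposition \ref{prop-core-deriv0}\ref{item-Gamma-virt-s}, and then quote Theorems \ref{thm-URS-lattice-prod} and \ref{thmintro-normal-coamenable-lc}. The only (harmless) difference is in the identification $\A_\Gamma = \mathcal{S}_\Gamma(\partial T)$: the paper gets it in one line from Proposition \ref{prop-background-A_G}, since $\partial T$ is a $\Gamma$-boundary with amenable point stabilizers, whereas you take a slightly longer detour through co-amenability of the envelope and Proposition \ref{prop-A_G-EP}; both are valid.
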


A group $\Gamma$ as in Corollary \ref{cor-intro-tree-disc} is never discrete in $\mathrm{Aut}(T)$. Recall that Burger and Mozes constructed simple groups $\Gamma$ acting on two locally finite regular trees $T,T'$ such that the image of $\Gamma$ in $\mathrm{Aut}(T)$ and $\mathrm{Aut}(T')$ are non-discrete, but $\Gamma$ acts freely and cocompactly on $T \times T'$, so that $\Gamma$ is a cocompact lattice in the product $\mathrm{Aut}(T) \times \mathrm{Aut}(T')$ \cite{BM-IHES-2}. These examples illustrate the fact that the assumption in Corollary \ref{cor-intro-tree-disc} that end-stabilizers are \textit{all} non-trivial is essential.

Examples of groups to which Corollary \ref{cor-intro-tree-disc} applies can be found among the family of groups denoted $G(F,F')$ in \cite{LB-ae} (see Corollary \ref{cor-g(f,f')-lat-obstruc}). These are examples of groups with a continuous Furstenberg URS. Here $F' \leq \mathrm{Sym}(d)$ is a finite permutation group and $F$ is a regular subgroup of $F'$. Recall that a permutation group is \textbf{regular} if it acts freely and transitively. The group $G(F,F')$ is then a finitely generated group acting on a $d$-regular tree, transitively on vertices and edges, and with local action at every vertex isomorphic to $F'$. We refer to \S \ref{subsec-background-g(f,f')} for a definition. The normal subgroup structure of these groups is highly sensible to the permutation groups: there are permutation groups $F,F'$ such that $G(F,F')$ virtually admits a non-abelian free quotient (Proposition \ref{prop-explicit-G/G^+}), and there are permutation groups $F,F'$ such that $G(F,F')^\ast$ (the subgroup of index two in $G(F,F')$ preserving the bipartition of $T_d$) is simple \cite[Cor.\ 4.14]{LB-ae}. This family of groups and the family of Burger--Mozes lattices in the product of two trees both contain instances of finitely generated simple groups which embed densely in some universal group $U(F)^+$ \cite[\S 3.2]{BM-IHES}. Despite these similarities, Corollary \ref{cor-g(f,f')-lat-obstruc} shows that any group containing a virtually simple $G(F,F')$ as a lattice is rather allergic to any direct product behavior.

\begin{rmq} \label{rmq-short}
As mentioned earlier, examples of lattice embeddings for the groups $G(F,F')$ are described in \cite{LB-irr-wreath}. There it is shown that these groups do embed as irreducible lattices in locally compact groups, but these locally compact groups are wreath products rather than direct products. 
\end{rmq}

We also mention that other examples of groups to which Corollary \ref{cor-intro-tree-disc} can be applied may be found among the family of piecewise prescribed tree automorphism groups considered in \cite[Sec.\ 4]{LB-cs}.

\subsection*{Questions}

We end this introduction with two questions. Extreme proximality is used in a crucial way at different stages of the proofs of Theorems \ref{thm-intro-URS-lattice-prod} and \ref{thmintro-normal-coamenable-lc}. These results both fail without the extreme proximality assumption, simply because then the group itself may very well be a direct product. Putting aside these trivial counter-examples, we do not know whether serious algebraic restrictions on a locally compact group may be derived from the existence of a lattice with a continuous Furstenberg URS. In this direction, we find the following question natural:

\begin{quest} \label{quest-latt-prd}
Does there exist $\Gamma$ with a continuous Furstenberg URS which is a lattice in a group $G = G_1 \times G_2$ such both factors are non-discrete, and $\Gamma$ has an injective and dense projection to each factor ? What if we impose moreover that $\Gamma$ has trivial amenable radical ?
\end{quest}

 Theorem 2.8 from \cite{LB-irr-wreath} presents a situation of a locally compact group $G$ with two cocompact lattices $\Gamma_1, \Gamma_2 \leq G$ such that the stabilizer URS associated to the $\Gamma_1$-action on $\partial_{sp} G$ is $\left\{\mathrm{Rad}(\Gamma_1)\right\}$, while the stabilizer URS associated to the $\Gamma_2$-action on $\partial_{sp} G$ is continuous. Here $\partial_{sp} G$ stands for the Furstenberg boundary of $G$; see \S \ref{subsec-top-bnd}. In these examples the group $G$ splits as $G = N \rtimes Q$, where $N$ is the amenable radical of $G$. The lattice $\Gamma_1$ preserves this splitting, meaning that we have $\Gamma_1 = (N \cap \Gamma_1) \rtimes (Q \cap \Gamma_1)$ (and hence $\Gamma_1$ does not act faithfully on $\partial_{sp} G$), while $\Gamma_2$ has an injective projection to $Q$. This naturally raises the following:

\begin{quest}
Let $G$ be a locally compact group with two lattices $\Gamma_1$ and $\Gamma_2$ both acting faithfully on $X = \partial_{sp} G$. Is it possible that the $\Gamma_1$-action on $X$ is topologically free, but the $\Gamma_2$-action on $X$ is not topologically free ? Can this happen with $\partial_{sp} G = G/H$ being a homogeneous $G$-space ?
\end{quest}

Note that by \cite[Prop.\ 7]{Furman-min-strg}, the condition that $\Gamma_1$ and $\Gamma_2$ act faithfully on $\partial_{sp} G$ is equivalent to saying that $\Gamma_1$ and $\Gamma_2$ have trivial amenable radical. Recall that topologically free means that there is a dense subset of points having trivial stabilizer (equivalently, the stabilizer URS is trivial).

\subsection*{Outline of proofs and organization} 

The article is organized as follows. In the next section we introduce terminology and preliminary results about topological boundaries and extremely proximal actions. In Section \ref{sec-urs} we establish the results about uniformly recurrent subgroups that are used in later sections. In particular we prove a certain gap property for URS's coming from extremely proximal actions (Proposition \ref{prop-dicho-urs-EP}). Combined with an observation about compact spaces with comparable stabilizer URS's (Proposition \ref{prop-same-urs}), we deduce that a locally compact group $H$ with an amenable URS that comes from an extremely proximal action, and whose envelope is co-amenable in $H$, is boundary indivisible (Proposition \ref{prop-A_G-EP}).

The setting of Section \ref{sec-epntf} is that of a group admitting a non-topologically free extremely proximal action. We establish intermediate results, notably concerning normal subgroups (Proposition \ref{prop-core-deriv0}) and commensurated subgroups (Proposition \ref{prop-commens-dicho}), and deduce non-embedding results for this class of groups (see Proposition \ref{prop-lin-rep-core} and Corollary \ref{cor-epntf-embed-amen}).

In Section \ref{sec-lattices-urs} we use results from Section \ref{sec-urs} together with Proposition \ref{prop-furst} of Furstenberg and prove Theorem \ref{thmintro-normal-coamenable-lc}. We then specify to discrete groups and give the proof of Theorem \ref{thm-intro-URS-lattice-prod}. The proof essentially splits in two steps: the first one is the application of Theorem \ref{thmintro-normal-coamenable-lc} to obtain amenability of one factor, and the second consists in proving that under appropriate assumptions the amenable factor is compact, using results from Section \ref{sec-epntf}.

In Section \ref{sec-trees} we consider groups acting on trees, and apply previous results of the article to this setting. After giving the proof of Corollary \ref{cor-intro-tree-disc}, we focus on the family of groups with prescribed local action $G(F,F')$. We study boundaries of these groups, and use results from Section \ref{sec-urs} in order to characterize the discrete groups within this family which are boundary indivisible (see Theorem \ref{thm-boundaries-G(F,F')}). This includes those which are virtually simple, but this also contain non-virtually simple instances.

\subsection*{Acknowledgements}

I am grateful to Alex Furman for pointing out Proposition \ref{prop-furst} to my attention, and to Uri Bader for enlightening discussion about the proof. I am also grateful to Pierre-Emmanuel Caprace, Yves Cornulier, Bruno Duchesne, Nicol\'{a}s Matte Bon, Nicolas Monod and Pierre Pansu for interesting discussions and comments related to this work; and to a referee for a careful reading of the article and for useful comments.

\section{Preliminaries}

\subsection{Conventions and terminology}

The letter $G$ will usually refer to a topological group, while $\Gamma$ will denote a discrete group. The group of homeomorphic automorphisms of $G$ will be denoted $\mathrm{Aut}(G)$. Whenever $G$ is a locally compact group, we will always assume that $G$ is second countable.

The notation $\X$ will refer to a topological space. The letters $X,Y$ will be reserved for compact spaces, and $Z$ for a compact space equipped with an extremely proximal group action. All compact spaces are assumed to be Hausdorff. 

A space $\X$ is a $G$-space if $G$ admits a continuous action $G \times \X \rightarrow \X$.  The action of $G$ on $\X$ (or the $G$-space $\X$) is \textbf{minimal} if all orbits are dense. The $G$-space $\X$ is said to be trivial if $\X$ is a one-point space.

If $\X$ is locally compact, we denote by $\prob(\X)$ the set of all regular Borel probability measures on $\X$. The space of continuous compactly supported functions on $\X$ is denoted $C_K(\X)$. Each $\mu \in \prob(\X)$ defines a linear functional on $C_K(\X)$, and we endow $\prob(\X)$ with the weak*-topology: a net $(\mu_i)$ converges to $\mu$ if $\mu_i(f) \rightarrow \mu(f)$ for all $f \in C_K(\X)$. By Banach-Alaoglu theorem, $\prob(\X)$ is relatively compact in $C_K(\X)^*$.

We denote by $2^\X$ the set of all closed subsets of $\X$. The sets \[ O(K;U_1,\ldots,U_n) = \left\{C \in 2^\X \, : \, C \cap K = \emptyset; \, C \cap U_i \neq \emptyset \, \, \text{for all $i$} \, \right\}, \] where $K \subset \X$ is compact and $U_1,\ldots,U_n \subset \X$ are open, form a basis for the Chabauty topology on $2^\X$. Endowed with the Chabauty topology, the space $2^\X$ is compact. We will freely identify $\X$ with its image in $2^\X$ by the natural inclusion $x \mapsto \left\{x\right\}$. Note that when $\X$ is a $G$-space, so is $2^\X$.

In the particular case where $\X = G$ is a locally compact group, the space $\sub(G)$ of closed subgroups of $G$ is closed in $2^G$. In particular $\sub(G)$ is a compact space, on which $G$ acts by conjugation. A \textbf{uniformly recurrent subgroup} (URS) of $G$ is a closed, $G$-invariant, minimal subset of $\sub(G)$. The set of URS's of $G$ is denoted $\urs(G)$. By extension we also say that a subgroup $H \leq G$ is uniformly recurrent if the closure of the conjugacy class of $H$ in $\sub(G)$ is minimal.

\subsection{Topological boundaries} \label{subsec-top-bnd}

Let $X$ be a compact $G$-space. The action of $G$ on $X$ is \textbf{proximal} if the closure of any $G$-orbit in $X \times X$ intersects the diagonal. The $G$-action on $X$ is \textbf{strongly proximal} if the closure of any $G$-orbit in $\prob(X)$ contains a Dirac measure. Strong proximality is stable under taking products (with diagonal action) and continuous equivariant images (see e.g.\ \cite{Gl-PF}).

We say that $X$ is a \textbf{boundary} if $X$ is both minimal and strongly proximal. For every topological group $G$, there exists a unique boundary $\partial_{sp} G$ with the universal property that for any boundary $X$, there exists a continuous $G$-equivariant surjection $\partial_{sp} G \rightarrow X$ \cite[Prop.\ 4.6]{Furst-bd-th}. This universal space $\partial_{sp} G$ is referred to as the \textbf{Furstenberg boundary} of $G$. It is easy to verify that any amenable normal subgroup $N$ of $G$ acts trivially on any $G$-boundary, so that $\partial_{sp} G = \partial_{sp} (G/N)$. 

If $G$ admits a cocompact amenable subgroup, then the Furstenberg boundary is a homogeneous space $\partial_{sp} G = G/H$, and the $G$-spaces of the form $G/L$ with $L$ containing $H$ are precisely the $G$-boundaries \cite[Prop.\ 4.4]{Furst-bd-th}. The situation for discrete groups is quite different: as shown in \cite{KK} and \cite{BKKO}, Furstenberg boundaries of discrete groups are always non-metrizable (unless trivial).

The following is a fundamental property of boundaries (see \cite[III.2.3]{Gl-PF}):

\begin{thm} \label{thm-conv-bnd}
Any convex compact $G$-space contains a boundary. In fact if $Q$ is an irreducible convex compact $G$-space, then the action of $G$ on $Q$ is strongly proximal, and the closure of extreme points of $Q$ is a $G$-boundary.
\end{thm}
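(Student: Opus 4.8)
My plan is to prove the second assertion (for irreducible $Q$) directly and then deduce the first one by a Zorn's lemma reduction: in an arbitrary convex compact $G$-space $Q_0$, the non-empty closed convex $G$-invariant subsets, ordered by reverse inclusion, form a poset in which every chain has an upper bound (its intersection is non-empty by compactness), so $Q_0$ contains a minimal such subset $Q$, which is precisely an irreducible convex compact $G$-space; the boundary produced inside $Q$ then lies in $Q_0$. Throughout I would use that $Q$ is a compact convex subset of a locally convex space with $G$ acting by affine homeomorphisms, the barycenter map $\beta \colon \prob(Q) \to Q$ (continuous, affine, $G$-equivariant, onto), and three classical convexity facts: Krein--Milman ($\mathrm{ext}(Q) \neq \emptyset$ and $Q = \overline{\mathrm{conv}}(\mathrm{ext}(Q))$), Milman's partial converse (if $K \subseteq Q$ is compact with $\overline{\mathrm{conv}}(K) = Q$ then $\mathrm{ext}(Q) \subseteq K$), and the fact that an extreme point $q$ has $\delta_q$ as its unique representing probability measure (so $\beta(\nu) = q$ forces $\nu = \delta_q$).

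For strong proximality I would fix $\mu \in \prob(Q)$ and set $K = \overline{G\mu}$, a compact $G$-invariant subset of $\prob(Q)$. Then $\beta(K)$ is compact and $G$-invariant, so $\overline{\mathrm{conv}}(\beta(K))$ is a non-empty closed convex $G$-invariant subset of $Q$ and equals $Q$ by irreducibility; Milman's theorem then gives $\mathrm{ext}(Q) \subseteq \beta(K)$. Choosing any $q \in \mathrm{ext}(Q)$ and some $\nu \in K$ with $\beta(\nu) = q$, the uniqueness of representing measures yields $\nu = \delta_q$, so $\delta_q \in \overline{G\mu}$. As $\mu$ is arbitrary, $G$ acts strongly proximally on $Q$.

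It then remains to show $E := \overline{\mathrm{ext}(Q)}$ is a boundary. It is a compact $G$-invariant set because affine homeomorphisms preserve extreme points. For minimality, given a non-empty closed $G$-invariant $E' \subseteq E$, the set $\overline{\mathrm{conv}}(E')$ is closed convex $G$-invariant, hence equals $Q$ by irreducibility, and Milman gives $\mathrm{ext}(Q) \subseteq E'$, so $E' = E$. For strong proximality of $E$, I would observe that $\prob(E)$ is a closed $G$-invariant subset of $\prob(Q)$; applying the previous step to $\nu \in \prob(E)$ produces a Dirac $\delta_q \in \overline{G\nu} \subseteq \prob(E)$, which forces $q \in E$. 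Hence $E$ is minimal and strongly proximal, i.e.\ a $G$-boundary.

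The genuinely non-formal inputs are the convexity facts, and I expect the crux to be the uniqueness of the representing measure of an extreme point: this is exactly what upgrades \enquote{the $\beta$-image of the orbit closure reaches an extreme point} to \enquote{the orbit closure itself reaches a Dirac mass}, and thereby delivers strong proximality. The remaining work — equivariance of $\beta$, stability of orbit closures, the Zorn reduction — is routine, provided one keeps all constructions inside the fixed locally convex ambient space so that barycenters exist and Krein--Milman and Milman apply.
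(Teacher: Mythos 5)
Your proof is correct, and the key steps (Zorn reduction to an irreducible $Q$, the barycenter map combined with Krein--Milman and Milman's converse, and Bauer's uniqueness of the representing measure at an extreme point to upgrade $\mathrm{ext}(Q)\subseteq\beta(\overline{G\mu})$ to $\delta_q\in\overline{G\mu}$) are all sound. The paper does not prove this statement itself but cites it as \cite[III.2.3]{Gl-PF}, and your argument is essentially the classical one given there, so there is nothing further to compare.
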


Irreducible means that $Q$ has no proper closed convex $G$-invariant subspace. In particular Theorem \ref{thm-conv-bnd} has the following consequence (\cite[III.3.1]{Gl-PF}):

\begin{thm} \label{thm-amean-bnd}
A group $G$ is amenable if and only if all $G$-boundaries are trivial, or equivalently $\partial_{sp} G$ is trivial.
\end{thm}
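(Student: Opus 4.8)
The plan is to deduce this from Theorem \ref{thm-conv-bnd}, using the fixed-point characterization of amenability: $G$ is amenable if and only if every convex compact $G$-space (with continuous affine $G$-action) has a $G$-fixed point. I would first dispose of the equivalence between \emph{all $G$-boundaries are trivial} and \emph{$\partial_{sp} G$ is trivial}. This is immediate from the universal property of $\partial_{sp} G$: if $\partial_{sp} G$ is trivial then any boundary $X$ receives a continuous $G$-equivariant surjection from a point, hence is itself a point; the converse is trivial since $\partial_{sp} G$ is in particular a boundary.

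For the forward implication, suppose $G$ is amenable and let $X$ be a $G$-boundary. The space $\prob(X)$ is a convex compact $G$-space, so by amenability there is a $G$-fixed point, i.e.\ a $G$-invariant measure $\mu \in \prob(X)$. Since $X$ is strongly proximal, the closure of the $G$-orbit of $\mu$ contains a Dirac mass; but the orbit of an invariant measure is the singleton $\{\mu\}$, forcing $\mu = \delta_x$ for some $x \in X$. Invariance of $\delta_x$ means $x$ is a $G$-fixed point, and minimality of $X$ then forces $X = \{x\}$ to be trivial.

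For the reverse implication I would argue contrapositively: if $G$ is not amenable, there is a convex compact $G$-space $Q$ with no $G$-fixed point. By compactness and Zorn's lemma the poset of nonempty closed convex $G$-invariant subsets of $Q$ has a minimal element $Q_0$, which is by construction irreducible. Theorem \ref{thm-conv-bnd} then provides a $G$-boundary, namely the closure $X$ of the set of extreme points of $Q_0$. If $X$ were trivial, say $X = \{x\}$, then by the Krein--Milman theorem $Q_0$ would equal the closed convex hull of $X$, hence $Q_0 = \{x\}$ would be a $G$-fixed point inside $Q$, a contradiction. Therefore $X$ is a nontrivial $G$-boundary.

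The argument is short once Theorem \ref{thm-conv-bnd} is in hand, and I expect no serious obstacle; the only points requiring care are the correct invocation of the fixed-point property as the working definition of amenability, and the Zorn's lemma step producing an irreducible subspace to which Theorem \ref{thm-conv-bnd} applies. The interplay between strong proximality and invariance in the forward direction is the one place where triviality of the boundary is genuinely extracted, so that is where I would be most careful.
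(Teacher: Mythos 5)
Your proof is correct and follows exactly the route the paper indicates: the paper gives no argument of its own, merely citing \cite[III.3.1]{Gl-PF} and presenting the statement as a consequence of Theorem \ref{thm-conv-bnd}, and your deduction (fixed measure plus strong proximality forces a Dirac mass and hence a fixed point in the forward direction; Zorn's lemma plus Theorem \ref{thm-conv-bnd} plus Krein--Milman in the reverse direction) is the standard way to fill that in. No gaps.
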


\subsection{Extremely proximal actions} \label{subsec-EP}

Let $X$ be a compact $G$-space. A closed subset $C$ of $X$ is \textbf{compressible} if the closure of the $G$-orbit of $C$ in the space $2^X$ contains a singleton $\left\{x\right\}$. Equivalently, for every neighbourhood $U$ of $x$, there exists $g \in G$ such that $g(C) \subset U$. The action of $G$ on $X$ is \textbf{extremely proximal} if every closed subset $C \subsetneq X$ is compressible. References where extremely proximal actions were considered include \cite{Glas-top-dyn-group,Laca-Spiel,Jo-Rob-simple-pur,shift-EP,LBMB}.

We will make use of the following result, which is Theorem 2.3 from \cite{Glas-top-dyn-group}:

\begin{thm} \label{thm-ep-sp}
Let $X$ be a compact $G$-space, and assume $X$ has at least three points. If the $G$-action on $X$ is extremely proximal, then it is strongly proximal. 
\end{thm}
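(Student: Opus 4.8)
The plan is to work directly with orbit closures of probability measures and to boost the largest atom of a measure until it reaches mass $1$. Fix $\mu \in \prob(X)$ and set $M = \overline{G\mu} \subseteq \prob(X)$, which is nonempty, compact (since $\prob(X)$ is compact) and $G$-invariant. Proving strong proximality amounts to exhibiting a Dirac mass in $M$. The functional $\Phi(\nu) = \max_{x \in X} \nu(\{x\})$ is upper semicontinuous on $\prob(X)$, hence attains its supremum $\alpha$ over the compact set $M$, say at $\nu_0 \in M$ with $\nu_0(\{x_0\}) = \alpha$. I would then show $\alpha = 1$, which forces $\nu_0 = \delta_{x_0} \in M$ and finishes the proof.

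The engine is the following compression step. Suppose $C \subsetneq X$ is a proper closed subset. By extreme proximality there is a net $(g_i)$ in $G$ with $g_i(C) \to \{z\}$ in $2^X$ for some $z \in X$; passing to a subnet, $g_i \nu_0 \to \nu_1$ for some $\nu_1 \in M$. For any closed neighbourhood $W$ of $z$ one has $g_i(C) \subseteq W$ eventually, whence $g_i\nu_0(W) \geq \nu_0(C)$, and letting $i$ increase gives $\nu_1(W) \geq \nu_0(C)$ for every such $W$; by regularity of $\nu_1$ this yields $\nu_1(\{z\}) \geq \nu_0(C)$. In other words, any proper closed set $C$ can be turned into an atom of mass at least $\nu_0(C)$ inside $M$, so by maximality of $\alpha$ we conclude $\nu_0(C) \leq \alpha$ for every proper closed $C \subsetneq X$.

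It remains to derive $\alpha = 1$, which is exactly the step where the hypothesis $|X| \geq 3$ is indispensable (and where the two-point swap action is excluded). Assuming $\alpha < 1$, I would produce a proper closed $C$ with $\nu_0(C) > \alpha$, contradicting the previous paragraph; equivalently, a nonempty open set $V$ with $\nu_0(V) < 1 - \alpha$. If some point of $X$ has $\nu_0$-mass zero, outer regularity gives such a $V$ as a small neighbourhood of that point. Otherwise $\nu_0$ is purely atomic of full support, so $X$ is countable and scattered: if $X$ is infinite its isolated atoms have masses tending to $0$ and any sufficiently small one serves as $V$, while if $X$ is finite with $n \geq 3$ atoms the smallest mass is at most $(1-\alpha)/(n-1) < 1 - \alpha$, and the corresponding isolated point gives $V$. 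The main obstacle is precisely this dichotomy: one must rule out the degenerate configuration $X = \{x_0, x_1\}$ with $\nu_0 = \alpha\delta_{x_0} + (1-\alpha)\delta_{x_1}$, where the only proper closed sets are singletons of mass $\leq \alpha$ and no boosting is possible — this is why three points are needed. A secondary technical point is justifying the passage to the atom in the compression step on a possibly non-metrizable $X$, which I would handle via outer regularity of Radon measures rather than sequential continuity from above.
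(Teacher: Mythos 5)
The paper does not actually prove this statement: it quotes it as Theorem~2.3 of Glasner's \emph{Topological dynamics and group theory} (the reference \cite{Glas-top-dyn-group}), so there is no in-paper argument to compare yours against. Judged on its own, your proof is correct and self-contained. The compression step is sound: if $g_i(C)\to\{z\}$ in $2^X$ then for every closed neighbourhood $W$ of $z$ one has $g_i(C)\subseteq W$ eventually (take the basic Chabauty neighbourhood $O(X\setminus \mathrm{int}\,W;X)$ of $\{z\}$), whence $C\subseteq g_i^{-1}(W)$ and $(g_i)_*\nu_0(W)\geq \nu_0(C)$; combining the portmanteau inequality for closed sets with outer regularity gives $\nu_1(\{z\})\geq\nu_0(C)$, so indeed $\nu_0(C)\leq\alpha$ for every proper closed $C$. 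The endgame correctly locates the only use of $|X|\geq 3$ (the two-point swap is extremely proximal but not strongly proximal), and the case analysis producing a nonempty open $V$ with $\nu_0(V)<1-\alpha$ works; in the infinite purely atomic case you can even skip the scatteredness discussion, since among infinitely many positive masses summing to $1$ some point has mass $<1-\alpha$ and outer regularity then supplies $V$ directly. The one assertion you state without proof is the upper semicontinuity of $\Phi(\nu)=\sup_x\nu(\{x\})$ (equivalently, that the supremum $\alpha$ is attained on $M$); since $\Phi$ is a supremum of upper semicontinuous functions it is not automatically upper semicontinuous, and you should justify it by the same mechanism as your compression step: pick $x_i$ realizing $\Phi(\nu_i)$ up to $\epsilon_i\to 0$, pass to a subnet with $x_i\to x$, and conclude $\nu(\{x\})\geq\limsup\Phi(\nu_i)$ via portmanteau and outer regularity. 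With that sentence added the argument is complete.
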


Examples of extremely proximal actions are provided by group actions on trees or hyperbolic spaces. If $G \leq \mathrm{Aut}(T)$ acts on $T$ with no proper invariant subtree and no finite orbit in $T \cup \partial T$, then the action of $G$ on $\partial T$ is minimal and extremely proximal; and if $G$ acts coboundedly on a proper geodesic hyperbolic space $\mathbb{X}$ with no fixed point or fixed pair at infinity, then the $G$-action on the Gromov boundary $\partial \mathbb{X}$ is minimal and extremely proximal.

These two situations are particular cases of the following more general result, that we believe is well-known. A homeomorphism $g$ of a space $X$ is hyperbolic if there exist $\xi_-, \xi_+ \in X$, called the endpoints of $g$, such that for all neighbourhoods $U_-,U_+$ of $\xi_-, \xi_+$, for $n$ large enough we have $g^n(X \setminus U_-) \subset U_+$ and $g^{-n}(X \setminus U_+) \subset U_-$.

\begin{prop} \label{prop-folk-EP}
If $G$ acts on a compact space $X$ with hyperbolic elements having no common endpoints, and such that the set of endpoints of hyperbolic elements of $G$ is dense in $X$, then the action is minimal and extremely proximal.
\end{prop}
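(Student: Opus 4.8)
The plan is to exploit the North--South dynamics encoded in the definition of a hyperbolic element: for a hyperbolic $g$ with endpoints $\xi_-,\xi_+$, the iterates $g^n$ uniformly attract everything outside a neighbourhood of the repelling endpoint $\xi_-$ into a neighbourhood of the attracting endpoint $\xi_+$. I record two elementary consequences I will use repeatedly. First, if $x\neq\xi_-$ then $g^n x\to\xi_+$: choosing a neighbourhood $U_-$ of $\xi_-$ avoiding $x$, one has $x\in X\setminus U_-$, so $g^n x\in U_+$ eventually, for every neighbourhood $U_+$ of $\xi_+$. Second, $g^{-1}$ is again hyperbolic, with the two endpoints interchanged (immediate from the symmetric form of the definition). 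In particular the density hypothesis applies equally to attracting and to repelling endpoints, since passing to inverses swaps the two roles.

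I will prove extreme proximality first, as it is the more direct half. Let $C\subsetneq X$ be a non-empty proper closed subset, so that $X\setminus C$ is non-empty open. By density of endpoints there is a hyperbolic element with an endpoint in $X\setminus C$; replacing it by its inverse if necessary, I obtain a hyperbolic $g$ whose \emph{repelling} endpoint $\xi_-$ lies in $X\setminus C$. Since $C$ is closed and $\xi_-\notin C$, some open neighbourhood $U_-$ of $\xi_-$ is disjoint from $C$, i.e.\ $C\subseteq X\setminus U_-$. Then for every neighbourhood $U_+$ of $\xi_+$ we have $g^n(C)\subseteq g^n(X\setminus U_-)\subseteq U_+$ for all large $n$. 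A routine check in the Chabauty topology, shrinking $U_+$ and using that the sets $g^n(C)$ are non-empty, shows $g^n(C)\to\{\xi_+\}$ in $2^X$, so $\{\xi_+\}$ lies in the orbit closure of $C$ and $C$ is compressible. Hence the action is extremely proximal.

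For minimality I will show that the only non-empty closed $G$-invariant subset is $X$; by Zorn's lemma it suffices to treat a minimal such set $M$. Fix any hyperbolic $g$ with endpoints $\xi_-,\xi_+$. If $M$ contains a point $x\neq\xi_-$, then $g^n x\to\xi_+$ forces $\xi_+\in M$, since $M$ is closed and invariant; applying the same reasoning to $g^{-1}$ gives $\xi_-\in M$ as well. Thus, once this case is established for \emph{every} hyperbolic element, $M$ contains all endpoints of all hyperbolic elements, and being closed it must contain their (dense) closure, so $M=X$. Consequently every non-empty closed invariant set, containing such an $M$, equals $X$, which is minimality.

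The one genuinely delicate point --- and the step I expect to be the main obstacle --- is excluding the degenerate alternative $M=\{\xi_-\}$, i.e.\ that $M$ reduces to a single repelling endpoint, which would then be a global fixed point. This is exactly where the \enquote{no common endpoints} hypothesis enters: since $\xi_-$ cannot be an endpoint of every hyperbolic element, there is a hyperbolic $k$ having neither endpoint equal to $\xi_-$; as $\xi_-$ is not the repelling endpoint of $k$, the orbit $k^n\xi_-$ converges to the attracting endpoint of $k$, which is different from $\xi_-$, contradicting that $\xi_-$ is fixed. This rules out the degenerate case and closes the argument. I will take care to phrase the no-common-endpoint hypothesis so that it yields, for each prescribed point $\xi$, a hyperbolic element avoiding $\xi$ as an endpoint; everything else is bookkeeping with neighbourhoods and the Chabauty topology.
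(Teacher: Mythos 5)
Your proof is correct and follows essentially the same route as the paper: density of endpoints gives a hyperbolic element whose North--South dynamics compresses any proper closed subset, and the no-common-endpoints hypothesis is used exactly as in the paper to rule out a global fixed point. The only cosmetic difference is that you phrase minimality in terms of closed invariant sets (via Zorn) where the paper works dually with open invariant sets, showing such a set contains $X\setminus\{\xi_-\}$ and then excluding the fixed point $\xi_-$.
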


\begin{proof}
Let $U \subset X$ be a non-empty open invariant subset. By our density assumption, there is $g \in G$ hyperbolic whose attracting endpoint $\xi_+$ belongs to $U$. So for every $x \neq \xi_-$, there is $n > 0$ such that $g^n(x) \in U$ since $U$ is open, so we deduce that $U$ contains $X \setminus \left\{ \xi_- \right\}$. But the existence of hyperbolic elements with no common endpoints ensures that $G$ fixes no point of $X$, so finally $U = X$, i.e.\ the action is minimal.

Now if $C \subsetneq X$ is a closed subset then again there is $g \in G$ whose attracting endpoint is outside $C$, and $C$ is compressible to the repealing endpoint of $g$.
\end{proof}

Recent work of Duchesne and Monod shows that group actions on dendrites is also a source of extremely proximal actions. Recall that a dendrite $X$ is a compact metrizable space such that any two points are the extremities of a unique arc. Duchesne and Monod show that if $\Gamma$ acts on $X$ with no invariant proper sub-dendrite, then there is a unique minimal closed invariant subset $M \subseteq X$ and the $\Gamma$-action on $M$ is extremely proximal. See the proof of Theorem 10.1 in \cite{DM-dend-curv}.

Extremely proximal actions also play a prominent role in the context of group actions on the circle. For any minimal action $\alpha: \Gamma \rightarrow \mathrm{Homeo}^+(\mathbb{S}^1)$, either $\alpha(\Gamma)$ is conjugated to a group of rotations, or $\alpha(\Gamma)$ has a finite centralizer $C_\Gamma$ in $\mathrm{Homeo}^+(\mathbb{S}^1)$ and the action of $\Gamma$ on the quotient circle $C_\Gamma \backslash \mathbb{S}^1$ is extremely proximal: see Ghys \cite{Ghys-circle} and Margulis \cite{Marg-free-circle}. We mention however that in all the examples of countable groups $\Gamma$ with an action on $\mathbb{S}^1$ that is minimal and not topologically free that we are aware of, the stabilizer URS is either non-amenable, or not known to be amenable. In particular we do not know any application of Theorem \ref{thm-intro-URS-lattice-prod} to groups acting on the circle.

In the sequel we will make use of the following easy lemma.

\begin{lem} \label{lem-EP-to-cocompact}
Let $G$ be a topological group, and $H$ a subgroup of $G$ such that there is some compact subset $K$ of $G$ such that $G = K H$. Let $X$ be a compact $G$-space, and $C$ a closed subset of $X$ that is compressible by $G$. Then $C$ is compressible by $H$. In particular if the $G$-action on $X$ is extremely proximal, then the $H$-action is extremely proximal.
\end{lem}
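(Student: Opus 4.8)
The ``in particular'' assertion is immediate from the first statement: if the $G$-action on $X$ is extremely proximal, then every closed subset $C \subsetneq X$ is compressible by $G$, hence by $H$, so the $H$-action is extremely proximal as well. Thus it suffices to prove that a $G$-compressible closed set $C$ is $H$-compressible. The idea is to use the factorisation $G = KH$ to absorb a ``compact error'': if $g = kh$ with $k \in K$ and $h \in H$ compresses $C$ close to the compression point $x$, then $h(C) = k^{-1}(g(C))$ is pushed close to $k^{-1}(x)$, and since $K$ is compact the correcting elements $k^{-1}$ cannot escape.

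To make this precise I would argue with nets, since $G$ is only assumed to be a topological group. By hypothesis there is a point $x \in X$ with $\{x\} \in \overline{G \cdot C}$ in the compact Hausdorff space $2^X$, so there is a net $(g_i)$ in $G$ with $g_i(C) \to \{x\}$. Write $g_i = k_i h_i$ with $k_i \in K$ and $h_i \in H$. As $K$ is compact, after passing to a subnet I may assume $k_i \to k$ for some $k \in K$; the subnet still satisfies $g_i(C) \to \{x\}$, and $k_i^{-1} \to k^{-1}$ by continuity of inversion. Now $h_i(C) = k_i^{-1}(g_i(C))$, and since the induced $G$-action on $2^X$ is (jointly) continuous and $2^X$ is compact, from $\bigl(k_i^{-1}, g_i(C)\bigr) \to (k^{-1}, \{x\})$ I obtain
\[
h_i(C) = k_i^{-1}\bigl(g_i(C)\bigr) \longrightarrow k^{-1}(\{x\}) = \{k^{-1}x\}.
\]
Hence $\{k^{-1}x\} \in \overline{H \cdot C}$, i.e.\ $C$ is compressible by $H$, as desired.

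The only genuine subtlety, and the step I would flag as the crux, is that the target point of the $H$-compression cannot be chosen in advance: as $g_i(C)$ approaches $x$, the correcting factors $k_i$ may genuinely vary, so the sets $h_i(C)$ a priori accumulate near the (compact) set $K^{-1}x$ rather than near a single point. Compactness of $K$ is exactly what lets me extract a subnet along which $k_i$ converges, thereby pinning down a single limit point $k^{-1}x$; without a compactness-type hypothesis on $K$ the statement would fail. The remaining ingredients are routine: joint continuity of the action $G \times 2^X \to 2^X$ (which holds because $2^X$ is a $G$-space) and compactness of $2^X$, both recorded in the preliminaries.
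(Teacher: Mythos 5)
Your proof is correct and follows essentially the same route as the paper's: decompose $g_i = k_i h_i$, pass to a subnet along which $k_i \to k$ using compactness of $K$, and conclude $h_i(C) \to k^{-1}x$ by continuity of the action $G \times 2^X \to 2^X$. The extra discussion of why the limit point cannot be prescribed in advance is a fair observation but does not change the argument.
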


\begin{proof}
By assumption there exists $x \in X$ and $(g_i)$ such that $g_i(C)$ converges to $x$ in $2^X$. If $g_i = k_i h_i$, by compactness of $K$ we assume that $(k_i)$ converges to some $k$, and it follows that $h_i(C)$ converges to $k^{-1}x$ by continuity of $G \times 2^X \rightarrow 2^X$.
\end{proof}

\section{Uniformly recurrent subgroups} \label{sec-urs}

\subsection{Generalities on uniformly recurrent subgroups} \label{subsec-gen-urs}

Let $G$ be a locally compact group. For $\H, \K \in \urs(G)$, we write $\H \preccurlyeq \K$ when there exist $H \in \H$ and $K \in \K$ such that $H \leq K$. This is equivalent to the fact that every $H \in \H$ is contained in an element of $\K$, and every $K \in \K$ contains an element of $\H$, and the relation $\preccurlyeq$ is an order on $\urs(G)$. See e.g.\ \S 2.4 in \cite{LBMB}.

For simplicity the URS $\left\{N\right\}$ associated to a closed normal subgroup $N$ of $G$ will still be denoted $N$. In particular $N \preccurlyeq \H$ (resp.\ $\H \preccurlyeq N$) means that $N$ is contained in (resp.\ contains) all the elements of $\H$. By the trivial URS we mean the URS corresponding to the trivial subgroup $\left\{1\right\}$. We warn the reader that in this terminology the URS corresponding to a non-trivial normal subgroup $N$ is trivial as a $G$-space (it is a one-point space), but is not trivial as a URS.

Let $X,Y$ be compact $G$-spaces. We say that $X$ is a \textbf{factor} of $Y$, and $Y$ is an \textbf{extension} of $X$, if there exists a continuous equivariant map $Y \rightarrow X$ that is onto. If $\pi: Y \rightarrow X$ is a continuous equivariant map, we say that $\pi$ is \textbf{almost 1-1} if the set of $y \in Y$ such that $\pi^{-1}(\pi(y)) = \left\{y\right\}$ is dense in $Y$. When moreover $\pi$ is onto we say that $Y$ is an almost 1-1 extension of $X$. 


We now recall the definition of the stabilizer URS associated to a minimal action on a compact space. If $X$ is a compact $G$-space and $x \in X$, we denote by $G_x$ the stabilizer of $x$ in $G$.

\begin{defi}
If $X$ is a compact $G$-space, we denote by $X_0 \subset X$ the set of points at which $\mathrm{Stab}: X \rightarrow \sub(G)$, $x \mapsto G_x$, is continuous.
\end{defi}

Upper semi-continuity of the map $\mathrm{Stab}$ and second countability of $G$ imply that $X_0$ is a dense subset of $X$ (indeed if $(U_n)$ is a basis of the topology on $G$ and $X_n$ is the set of $x \in X$ such that $G_x \cap \overline{U_n} \neq \emptyset$, which is closed, one verifies that $\mathrm{Stab}$ is continuous on $\cap_n (\partial X_n)^c$). Following \cite{Gla-Wei}, we denote \[ \tilde{X} = \mathrm{cls}\left\{(x,G_x) \, : \, x \in X_0\right\} \subset X \times \sub(G),\] and \[ \mathcal{S}_G(X) = \mathrm{cls}\left\{G_x \, : \, x \in X_0\right\} \subset \sub(G),\] where $\mathrm{cls}$ stands for the closure in the ambient space. We have the obvious inclusions \[ \tilde{X} \subseteq X^\ast := \mathrm{cls}\left\{(x,G_x) \, : \, x \in X\right\} \] and \[ \mathcal{S}_G(X) \subseteq \mathcal{S}_G^\ast(X) := \mathrm{cls}\left\{G_x \, : \, x \in X\right\}. \]

We denote by $\eta_X$ and $\pi_X$ the projections from $X \times \sub(G)$ to $X$ and $\sub(G)$ respectively.

\begin{prop}[Prop.\ 1.2 in \cite{Gla-Wei}] \label{prop-GW-1.2}
If $X$ is a minimal compact $G$-space, then $\eta_X: \tilde{X} \rightarrow X$ is an almost 1-1 extension, and $\tilde{X}$ and $\mathcal{S}_G(X)$ are the unique minimal closed $G$-invariant subsets of respectively $X^\ast$ and $\mathcal{S}_G^\ast(X)$.
\end{prop}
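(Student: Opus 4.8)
The plan is to reduce everything to one observation about the fibers of the projection $\eta_X$ over the continuity locus $X_0$. First I would record that the stabilizer map $\mathrm{Stab}\colon x \mapsto G_x$ is $G$-equivariant (since $G_{gx} = gG_xg^{-1}$) and that both actions involved are by homeomorphisms; hence its continuity locus $X_0$ is $G$-invariant, so that $\tilde X$, $X^\ast$, $\mathcal{S}_G(X)$ and $\mathcal{S}_G^\ast(X)$ are all $G$-invariant closed subsets. The crucial lemma I would isolate is: \emph{for every $x \in X_0$, the fiber $\eta_X^{-1}(x) \cap X^\ast$ consists of the single point $(x, G_x)$.} Indeed, any element of this fiber is a limit $(x,H) = \lim_i (x_i, G_{x_i})$ with $x_i \to x$; continuity of $\mathrm{Stab}$ at $x$ forces $G_{x_i} \to G_x$ in $\sub(G)$, and since $\sub(G)$ is Hausdorff we get $H = G_x$. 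Note this already holds inside $X^\ast$ (not merely inside $\tilde X$), which is precisely what makes the uniqueness statements work.

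Granting this, the almost 1-1 property is immediate: the points $(x, G_x)$ with $x \in X_0$ form a dense subset of $\tilde X$ by definition, and each has a singleton $\eta_X$-fiber by the lemma, so the set of injectivity points of $\eta_X$ is dense; surjectivity of $\eta_X$ follows from density of $X_0$ and compactness of $\tilde X$. For minimality of $\tilde X$, I would take a nonempty closed $G$-invariant $M \subseteq \tilde X$; then $\eta_X(M)$ is closed, invariant and nonempty, hence equals $X$ by minimality of $X$. Thus $M$ meets every fiber, so for $x \in X_0$ the lemma forces $(x, G_x) \in M$; since these points are dense in $\tilde X$ and $M$ is closed, $M = \tilde X$.

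The same surjectivity-plus-fiber argument proves uniqueness of $\tilde X$ as a minimal subset of $X^\ast$: if $M \subseteq X^\ast$ is minimal, then $\eta_X(M) = X$, and the lemma applied in $X^\ast$ gives $(x,G_x) \in M$ for all $x \in X_0$, whence $\tilde X \subseteq M$, and minimality of $M$ yields $M = \tilde X$. Finally I would transfer both statements to the $\sub(G)$-side through $\pi_X$. Since $\pi_X$ is continuous and equivariant one checks $\pi_X(\tilde X) = \mathcal{S}_G(X)$ and $\pi_X(X^\ast) = \mathcal{S}_G^\ast(X)$, and the continuous equivariant image of a minimal space is minimal, so $\mathcal{S}_G(X)$ is minimal. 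For uniqueness, given a minimal $N \subseteq \mathcal{S}_G^\ast(X)$, the set $\pi_X^{-1}(N) \cap X^\ast$ is nonempty (as $N \subseteq \pi_X(X^\ast)$), closed and invariant, so it contains a minimal subset, which must be $\tilde X$ by the uniqueness just established; pushing forward gives $\mathcal{S}_G(X) = \pi_X(\tilde X) \subseteq N$, and minimality of $N$ forces equality.

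The only real content is the fiber lemma, so I expect the main obstacle to be the careful handling of continuity: one must use that $X_0$ is \emph{exactly} where $\mathrm{Stab}$ is continuous, since upper semicontinuity alone does not pin down the limit $H$, together with uniqueness of limits in the Chabauty space $\sub(G)$. Everything else is a formal consequence of minimality of $X$ and the standard facts that minimality passes to continuous equivariant images and that every compact invariant set contains a minimal subset.
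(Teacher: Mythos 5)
Your proof is correct, and its key step (that for $x \in X_0$ any point $(x,L) \in X^\ast$ must have $L = G_x$, by continuity of $\mathrm{Stab}$ at $x$) is exactly the observation the paper uses when it proves the generalization, Proposition \ref{prop-mini-urs-min}; the paper itself only cites \cite{Gla-Wei} for the statement at hand. The remaining steps (surjectivity onto $X$ by minimality, density of the singleton fibers, and pushing forward through $\pi_X$) match the paper's argument as well.
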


\begin{defi} \label{def-stab-urs}
If $X$ is a minimal compact $G$-space, $\mathcal{S}_G(X)$ is the \textbf{stabilizer URS} associated to the $G$-action on $X$. The action of $G$ on $X$ is \textbf{topologically free} if $\mathcal{S}_G(X)$ is trivial, i.e.\ $\mathcal{S}_G(X) = \left\{\left\{1\right\}\right\}$.
\end{defi}

\begin{rmq}
When $G$ is not assumed second countable, in general $X_0$ is no longer dense in $X$. However it is still possible to define the stabilizer URS associated to a minimal action on a compact space; see the discussion in \cite[p1]{MB-Ts-realizing}.
\end{rmq}

In the sequel we will sometimes use the following version of Proposition \ref{prop-GW-1.2}.

\begin{prop} \label{prop-mini-urs-min}
Let $X$ be compact $G$-space, and let $H \leq G$ be a subgroup acting minimally on $X$. Then $H$ acts minimally on $\tilde{X}$ and $\mathcal{S}_G(X)$, and $\tilde{X}$ and $\mathcal{S}_G(X)$ are the unique minimal closed $H$-invariant subsets of $X^\ast$ and $\mathcal{S}_G^\ast(X)$.
\end{prop}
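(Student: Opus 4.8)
The plan is to reduce everything to a single observation about the continuity set $X_0$, and then rerun the compactness arguments of Proposition \ref{prop-GW-1.2} with $H$ in place of $G$. First I note that since $H \leq G$ acts minimally on $X$, a fortiori $G$ acts minimally on $X$, so all the structure recorded just before the statement is available: $X_0$ is dense (this used only upper semi-continuity of $\mathrm{Stab}$ and second countability, not minimality), and $\eta_X : \tilde{X} \to X$ is an almost 1-1 extension. The key point, which holds with no minimality hypothesis, is the following: for every $x \in X_0$ and every $(x, K) \in X^\ast$, one has $K = G_x$. Indeed, writing $(x, K) = \lim_i (x_i, G_{x_i})$ with $x_i \in X$, continuity of $\mathrm{Stab}$ at the point $x \in X_0$ forces $G_{x_i} \to G_x$, hence $K = G_x$. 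Thus $(\eta_X|_{X^\ast})^{-1}(x) = \{(x, G_x)\}$ is a singleton for each $x \in X_0$; call this observation $(\ast)$.

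For minimality of the $H$-action on $\tilde{X}$, I would take a non-empty closed $H$-invariant subset $M \subseteq \tilde{X}$. Then $\eta_X(M)$ is a non-empty closed $H$-invariant subset of $X$, hence equals $X$ by $H$-minimality of $X$. Given $x \in X_0$, choose $(x, K) \in M$; observation $(\ast)$ gives $(x, G_x) \in M$. Therefore $M$ contains $\{(x, G_x) : x \in X_0\}$, which is dense in $\tilde{X}$, and since $M$ is closed we get $M = \tilde{X}$. Minimality of $H$ on $\mathcal{S}_G(X)$ is then immediate, since $\mathcal{S}_G(X) = \pi_X(\tilde{X})$ is a continuous $H$-equivariant image of the $H$-minimal space $\tilde{X}$ (the equality of sets following from continuity of $\pi_X$ and compactness of $\tilde{X}$).

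Uniqueness follows the same scheme. If $M \subseteq X^\ast$ is any minimal closed $H$-invariant subset, then $\eta_X(M) = X$ as above, and $(\ast)$ forces $(x, G_x) \in M$ for all $x \in X_0$, so $M \supseteq \tilde{X}$; minimality of $M$ then yields $M = \tilde{X}$. For $\mathcal{S}_G^\ast(X)$, given a minimal closed $H$-invariant subset $N$, the preimage $(\pi_X|_{X^\ast})^{-1}(N)$ is non-empty, closed and $H$-invariant, hence contains a minimal closed $H$-invariant subset, which by the previous sentence must be $\tilde{X}$; pushing forward by $\pi_X$ gives $\mathcal{S}_G(X) \subseteq N$, and minimality of $N$ forces $N = \mathcal{S}_G(X)$. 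I do not expect a genuine obstacle here: the whole argument rests on observation $(\ast)$, which is completely insensitive to whether $G$ or $H$ acts. The only points requiring care are verifying that the fibre-collapsing over $X_0$ persists in all of $X^\ast$ and not merely in $\tilde{X}$ (so that the uniqueness statements reach beyond $\tilde{X}$), and the routine lifting of minimal sets through $\pi_X$; everything else is standard minimal-set bookkeeping.
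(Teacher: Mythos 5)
Your proposal is correct and follows essentially the same route as the paper: the paper's proof also rests on the observation that for $x \in X_0$ any $(x,L) \in X^\ast$ must have $L = G_x$, deduces $\tilde{X} \subseteq Y$ for every closed $H$-invariant $Y \subseteq X^\ast$, and transfers the conclusions to $\mathcal{S}_G(X)$ and $\mathcal{S}_G^\ast(X)$ via the factor map $\pi_X$. The only cosmetic difference is that the paper gets minimality of $\tilde{X}$ by invoking that almost 1-1 extensions preserve minimality, whereas you rerun the same fiber-collapsing argument directly; these amount to the same thing.
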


\begin{proof}
Let $Y \subseteq X^\ast$ closed and $H$-invariant. Since $X$ is a factor of $X^\ast$ and $H$ acts minimally on $X$, for every $x \in X_0$ there exists $L \in \sub(G)$ such that $(x,L) \in Y$. But for $x \in X_0$, the fact that $(x,L)$ belongs to $X^\ast$ forces $L$ to be equal to $G_x$ by definition of $X_0$, and it follows that $\tilde{X} \subseteq Y$. Moreover $H$ acts minimally on $\tilde{X}$ since $\eta_X: \tilde{X} \rightarrow X$ is an almost 1-1 extension and minimality is preserved by taking almost 1-1 extensions (if $\pi: X_1 \rightarrow X_2$ is almost 1-1 and if $C \subseteq X_1$ is a closed subset such that $\pi(C) = X_2$, then $C = X_1$). So the statements for $\tilde{X}$ and $X^\ast$ are established, and the same hold for $\mathcal{S}_G(X)$ and $\mathcal{S}_G^\ast(X)$ since these are factors of $\tilde{X}$ and $X^\ast$.
\end{proof}

\subsection{Envelopes}

Let $G$ be a locally compact group and $\H \in \urs(G)$.

\begin{defi}
The \textbf{envelope} $\env(\H)$ of $\H$ is the closed subgroup of $G$ generated by all the subgroups $H \in \H$.
\end{defi}

By definition $\env(\H)$ is the smallest closed subgroup of $G$ such that $\H \subset \sub(\env(\H))$. Note that $\env(\H)$ is a normal subgroup of $G$, and is actually the smallest normal subgroup such that $\H \preccurlyeq \env(\H)$. 

\medskip

Let $\Gamma$ be a discrete group, $X$ a compact $\Gamma$-space and $X_0$ the domain of continuity of the map $\mathrm{Stab}$. It is a classical fact that $X_0$ consists of those $x \in X$ such that for every $\gamma \in \Gamma_x$, there exists $U$ neighbourhood of $x$ that is fixed by $\gamma$ (see e.g.\ \cite[Lem.\ 5.4]{Vorob-grig} for a proof). For $x \in X$, we will denote by $\Gamma_x^0 \leq \Gamma_x$ the set of elements fixing a neighbourhood of $x$, so that $x \in X_0$ if and only if $\Gamma_x = \Gamma_x^0$.

\begin{lem} \label{lem-equiv-fix-pt}
Let $\Gamma$ be a countable discrete group, $X$ a compact minimal $\Gamma$-space, $n \geq 1$ and $\gamma_1,\ldots,\gamma_n \in \Gamma$. The following are equivalent:
\begin{enumerate}[label=(\roman*)]
	\item \label{item-int} $\cap \fix(\gamma_i)$ has non-empty interior;
	\item \label{item-X} there is $x \in X$ such that $\gamma_i \in \Gamma_x^0$ for all $i$;
	\item \label{item-X0} same as in \ref{item-X} but with $x \in X_0$;
	\item \label{item-urs} there is $H \in S_\Gamma(X)$ such that $\gamma_i \in H$ for all $i$.
\end{enumerate}
In particular $\env(\mathcal{S}_\Gamma(X))$ is generated by the elements $\gamma \in \Gamma$ such that $\fix(\gamma)$ has non-empty interior.
\end{lem}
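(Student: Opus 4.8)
The plan is to prove the four equivalences by going through the cycle $\ref{item-int} \Leftrightarrow \ref{item-X} \Leftrightarrow \ref{item-X0} \Leftrightarrow \ref{item-urs}$, and then to read off the final sentence from the case $n=1$. The first step is purely point-set: by the description recalled just before the lemma, $\gamma \in \Gamma_x^0$ if and only if $\gamma$ fixes pointwise a neighbourhood of $x$, i.e.\ $x \in \mathrm{int}(\fix(\gamma))$. Since there are only finitely many $\gamma_i$ and the interior operator commutes with finite intersections, $\mathrm{int}(\bigcap_i \fix(\gamma_i)) = \bigcap_i \mathrm{int}(\fix(\gamma_i))$. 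Hence a point $x$ witnesses $\ref{item-X}$ exactly when $x \in \mathrm{int}(\bigcap_i \fix(\gamma_i))$, and such a point exists precisely when $\ref{item-int}$ holds; this gives $\ref{item-int} \Leftrightarrow \ref{item-X}$.

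Next, $\ref{item-X0} \Rightarrow \ref{item-X}$ is trivial since $X_0 \subseteq X$. For $\ref{item-X} \Rightarrow \ref{item-X0}$ I would invoke the density of $X_0$ in $X$ recalled in the excerpt: the set $U = \mathrm{int}(\bigcap_i \fix(\gamma_i))$ is non-empty by $\ref{item-int}$, so it meets $X_0$, and any $x \in U \cap X_0$ satisfies $U \subseteq \fix(\gamma_i)$ for each $i$, so $\gamma_i \in \Gamma_x^0$; this yields $\ref{item-X0}$.

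It remains to treat $\ref{item-X0} \Leftrightarrow \ref{item-urs}$, which is the only place where the Chabauty topology genuinely enters, and which I expect to be the main (and essentially only) obstacle. The easy direction is $\ref{item-X0} \Rightarrow \ref{item-urs}$: for $x \in X_0$ one has $\Gamma_x = \Gamma_x^0 \in \mathcal{S}_\Gamma(X)$ by the very definition $\mathcal{S}_\Gamma(X) = \mathrm{cls}\{\Gamma_y : y \in X_0\}$, and $\gamma_i \in \Gamma_x^0 = \Gamma_x$, so $H = \Gamma_x$ works. For the converse $\ref{item-urs} \Rightarrow \ref{item-X0}$ I would use that, $\Gamma$ being countable, $\sub(\Gamma)$ is metrizable and its Chabauty topology is the topology of pointwise convergence of membership: for each fixed $\gamma$, both $\{C : \gamma \in C\}$ and $\{C : \gamma \notin C\}$ are open in $\sub(\Gamma)$. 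Given $H \in \mathcal{S}_\Gamma(X)$ containing every $\gamma_i$, I would pick a sequence $x_k \in X_0$ with $\Gamma_{x_k} \to H$; since the tuple is finite and each $\gamma_i \in H$, for $k$ large enough $\gamma_i \in \Gamma_{x_k}$ holds simultaneously for all $i$, and $x_k \in X_0$ gives $\Gamma_{x_k} = \Gamma_{x_k}^0$, exhibiting the point required in $\ref{item-X0}$. The finiteness of the tuple is precisely what lets the membership conditions pass through the limit; this is the subtle point, everything else being formal.

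Finally, the displayed conclusion follows from the case $n=1$. By definition $\env(\mathcal{S}_\Gamma(X))$ is generated by $\bigcup_{H \in \mathcal{S}_\Gamma(X)} H$, that is, by the set of $\gamma \in \Gamma$ satisfying $\ref{item-urs}$ with $n=1$; and by the equivalence $\ref{item-int} \Leftrightarrow \ref{item-urs}$ this set is exactly $\{\gamma \in \Gamma : \fix(\gamma) \text{ has non-empty interior}\}$. Hence $\env(\mathcal{S}_\Gamma(X))$ is generated by those elements, as claimed.
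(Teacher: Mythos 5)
Your proof is correct and follows essentially the same route as the paper's (which is just a terser version of the same chain of equivalences: (i)$\Leftrightarrow$(ii) by unwinding the definition of $\Gamma_x^0$, (ii)$\Leftrightarrow$(iii) by density of $X_0$, and (iii)$\Leftrightarrow$(iv) by density of $\{\Gamma_x^0 : x \in X_0\}$ in $\mathcal{S}_\Gamma(X)$ together with the fact that membership of a fixed finite tuple is an open condition in the Chabauty topology of a discrete group). Your write-up merely supplies the details the paper leaves implicit.
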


\begin{proof}
It is clear that \ref{item-int} and \ref{item-X} are equivalent. Also \ref{item-X0} clearly implies \ref{item-X}, and \ref{item-X} also implies \ref{item-X0} by density of $X_0$ in $X$. Finally \ref{item-X0} implies \ref{item-urs} since $\Gamma_x^0 \in \mathcal{S}_\Gamma(X)$ for $x \in X_0$, and \ref{item-urs} implies \ref{item-X0} by density of the set of $\Gamma_x^0$, $x \in X_0$, in $\mathcal{S}_\Gamma(X)$.
\end{proof}

\subsection{$G$-spaces with comparable stabilizer URS's} \label{subsec-presc-urs}

We recall the notion of disjointness from \cite{Furst-disjoint}. Two compact $G$-spaces $X,Y$ are \textbf{disjoint} if whenever $X,Y$ are factors of a compact $G$-space $\Omega$ via $\varphi_X: \Omega \rightarrow X$ and $\varphi_Y: \Omega \rightarrow Y$, then the map $(\varphi_X, \varphi_Y): \Omega \rightarrow X \times Y$ is surjective. When $X,Y$ are minimal $G$-spaces, this is equivalent to saying that the product $X \times Y$ remains minimal \cite[Lem.\ II.1]{Furst-disjoint}.

The following lemma presents a situation which easily implies disjointness:

\begin{lem} \label{lem-triv-disj}
Let $X,Y$ be minimal compact $G$-spaces such that there exists $y_0 \in Y$ such that $G_{y_0}$ acts minimally on $X$. Then $X$ and $Y$ are disjoint.
\end{lem}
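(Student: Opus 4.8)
The plan is to use the characterization recalled just before the statement: since $X$ and $Y$ are minimal $G$-spaces, disjointness is equivalent to minimality of the diagonal action on $X \times Y$ (by \cite[Lem.\ II.1]{Furst-disjoint}). So I would fix a non-empty closed $G$-invariant subset $M \subseteq X \times Y$ and prove $M = X \times Y$; the whole argument reduces to a fiber computation over the distinguished point $y_0$.

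First I would observe that the projection $\pi_Y \colon X \times Y \rightarrow Y$ is continuous, surjective and $G$-equivariant, so $\pi_Y(M)$ is a non-empty closed (image of a compact set) $G$-invariant subset of the minimal space $Y$; hence $\pi_Y(M) = Y$. In particular $y_0 \in \pi_Y(M)$, so the fiber $M_{y_0} := \{x \in X : (x,y_0) \in M\}$ is non-empty, and it is clearly closed in $X$. The key point is that $M_{y_0}$ is invariant under $G_{y_0}$: if $g \in G_{y_0}$ and $(x,y_0) \in M$, then $g\cdot(x,y_0) = (gx, gy_0) = (gx, y_0) \in M$ because $M$ is $G$-invariant and $g$ fixes $y_0$, so $gx \in M_{y_0}$. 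Since by hypothesis $G_{y_0} \acts X$ is minimal and $M_{y_0}$ is a non-empty closed $G_{y_0}$-invariant subset, minimality forces $M_{y_0} = X$, i.e.\ $X \times \{y_0\} \subseteq M$.

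To finish I would saturate under the full group: $G$-invariance of $M$ gives $g(X \times \{y_0\}) = X \times \{gy_0\} \subseteq M$ for every $g$ (using $gX = X$), so $X \times Gy_0 \subseteq M$. Taking closures and using $\overline{X \times Gy_0} = \overline{X} \times \overline{Gy_0} = X \times Y$ (as $X$ is already closed and $\overline{Gy_0} = Y$ by minimality of $Y$), together with $M$ being closed, yields $X \times Y \subseteq M$, hence $M = X \times Y$. This proves $X \times Y$ is minimal and therefore $X$ and $Y$ are disjoint.

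There is no serious obstacle here: the argument is purely topological-dynamical and requires no countability assumptions. The one conceptual step worth isolating is the $G_{y_0}$-invariance of the fiber $M_{y_0}$, which is exactly where the hypothesis on $y_0$ is consumed; the only mildly technical verification is the closure identity $\overline{X \times Gy_0} = X \times Y$, which is routine.
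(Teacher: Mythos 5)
Your proof is correct and follows exactly the same route as the paper's: project to $Y$ to get a point of $M$ over $y_0$, use $G_{y_0}$-invariance of the fiber together with minimality of $G_{y_0} \acts X$ to obtain $X \times \{y_0\} \subseteq M$, then saturate under $G$ and use minimality of $Y$ to conclude. The only difference is that you spell out the details (fiber invariance, the closure identity) that the paper leaves implicit.
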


\begin{proof}
This is clear: if $W$ is a closed invariant subset of $X \times Y$, then by minimality of $Y$ there exists $x_0 \in X$ such that $(x_0,y_0) \in W$. Since $G_{y_0}$ acts minimally on $X$ we deduce that $W$ contains $X \times \left\{y_0\right\}$, and by minimality of $Y$ it follows that $W$ is equal to $X \times Y$.
\end{proof}

The following proposition will be used notably in Proposition \ref{prop-A_G-EP}.

\begin{prop} \label{prop-same-urs}
Let $X,Y$ be compact minimal $G$-spaces, and write $\H = \mathcal{S}_G(X)$ and $\K = \mathcal{S}_G(Y)$. Suppose that $\H \preccurlyeq \K$. Then $X$ and $Y$ can be disjoint only if $\env(\H) \preccurlyeq \K$.

In particular if $\mathcal{S}_G(X) = \mathcal{S}_G(Y)$ and this URS is not a point, then $X$ and $Y$ are not disjoint.
\end{prop}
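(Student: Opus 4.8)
The plan is to transfer the disjointness hypothesis from $X$ and $Y$ to the pair consisting of the URS $\H$ (viewed as a minimal subset of $\sub(G)$) and $Y$, and then to exhibit a canonical closed $G$-invariant subset of $\H \times Y$ encoding the relation ``$L$ fixes $y$''. Minimality will force this subset to be everything, which yields that $\env(\H)$ acts trivially on $Y$.

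\medskip

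First, recall that disjointness of the minimal spaces $X$ and $Y$ means precisely that $X \times Y$ is minimal (\cite[Lem.\ II.1]{Furst-disjoint}). The crucial step is to upgrade this to minimality of $\H \times Y$. For this I would use the almost 1-1 extension $\eta_X \colon \tilde{X} \to X$ of Proposition \ref{prop-GW-1.2}. The product map $\eta_X \times \mathrm{id} \colon \tilde{X} \times Y \to X \times Y$ is onto, and it is again almost 1-1: over the dense set of injectivity points of $\eta_X$ times $Y$ the fibres are singletons. Since almost 1-1 extensions of minimal systems are minimal (the parenthetical fact established inside the proof of Proposition \ref{prop-mini-urs-min}), $\tilde{X} \times Y$ is minimal. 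As $\H = \mathcal{S}_G(X)$ is the image of $\tilde{X}$ under $\pi_X$, the space $\H \times Y$ is a factor of $\tilde{X} \times Y$ via $\pi_X \times \mathrm{id}$, hence minimal; that is, $\H$ and $Y$ are disjoint.

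\medskip

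Next I would introduce $Z = \left\{ (L,y) \in \H \times Y : L \leq G_y \right\}$, the set of pairs in which $L$ fixes $y$. This set is $G$-invariant, since if $L$ fixes $y$ then $gLg^{-1}$ fixes $gy$; and it is closed, by a standard Chabauty argument (if $(L_n,y_n) \to (L,y)$ with $L_n y_n = y_n$, then each $\ell \in L$ is a limit of elements $\ell_n \in L_n$, and continuity of the action gives $\ell y = \lim \ell_n y_n = \lim y_n = y$). To see that $Z$ is non-empty I use the hypothesis $\H \preccurlyeq \K$: choose $H \in \H$ and $K \in \K$ with $H \leq K$. Every element of $\K = \mathcal{S}_G(Y)$ fixes a point of $Y$, because for $(y,K) \in \tilde{Y}$ one has $K \leq G_y$ (the set of such pairs is closed and contains all $(y,G_y)$, hence contains $\tilde{Y}$). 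Thus there is $y$ with $K \leq G_y$, giving $H \leq G_y$ and $(H,y) \in Z$. By minimality of $\H \times Y$ we conclude $Z = \H \times Y$, i.e.\ every $H \in \H$ fixes every point of $Y$, so $\env(\H)$ acts trivially on $Y$. Acting trivially on $Y$ forces $\env(\H) \leq G_y^0$ for every $y$, and since $\K$ lies in the closure of $\{G_y^0 : y \in Y_0\}$ while a fixed subgroup contained in all approximants is contained in the Chabauty limit, we get $\env(\H) \leq K$ for all $K \in \K$, which is exactly $\env(\H) \preccurlyeq \K$.

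\medskip

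The ``in particular'' assertion then follows immediately: if $\mathcal{S}_G(X) = \mathcal{S}_G(Y) = \H$ and $X,Y$ were disjoint, the main statement (with $\H \preccurlyeq \K = \H$ holding trivially) would give $\env(\H) \leq H$ for every $H \in \H$, while always $H \leq \env(\H)$; hence $H = \env(\H)$ for all $H \in \H$, so $\H$ is a single point, contradicting the hypothesis. I expect the genuine obstacle to be the promotion of disjointness from $X \times Y$ to $\H \times Y$: this is the only place where the fine structure of the stabilizer URS (through the almost 1-1 extension $\tilde{X}$) is really used, and one must check carefully that minimality both survives passing up to $\tilde{X} \times Y$ and descends to the factor $\H \times Y$.
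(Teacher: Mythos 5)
Your proof is correct and runs on essentially the same engine as the paper's: Proposition \ref{prop-GW-1.2} plus the fact that almost 1-1 extensions preserve minimality, applied to a closed invariant incidence set that minimality forces to be everything. The only (cosmetic) difference is that you symmetrize one factor only, proving minimality of $\H \times Y$ and using the relation ``$L \leq G_y$'', whereas the paper keeps both factors, works with $\left\{(H,K) : H \leq K\right\} \subseteq \H \times \K$, and derives the contradiction by pushing its preimage down to $X \times Y$.
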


\begin{proof}
Using notation from Proposition \ref{prop-GW-1.2}, we have almost 1-1 extensions $\eta_X : \tilde{X} \rightarrow X$ and $\eta_Y : \tilde{Y} \rightarrow Y$, and we write $\eta = \eta_X \times \eta_Y: \tilde{X} \times \tilde{Y} \rightarrow X \times Y$, and $\pi = \pi_X \times \pi_Y: \tilde{X} \times \tilde{Y} \rightarrow \H \times \K$. The set $W \subseteq \H \times \K$ of pairs $(H,K)$ such that $H \leq K$ is non-empty by assumption, is clearly $G$-invariant, and is easily seen to be closed. If $W$ is a proper subset of $\H \times \K$ then $\pi^{-1}(W)$ is a proper subset of $\tilde{X} \times \tilde{Y}$ since $\pi$ is a factor, and it follows that $\eta \left( \pi^{-1}(W) \right)$ is a closed $G$-invariant subset of $X \times Y$ that is proper since $\eta$ is almost 1-1. This contradicts disjointness of $X$ and $Y$. Therefore we have $W = \H \times \K$. This means that for a fixed $K \in \K$ we have $H \leq K$ for every $H \in \H$, and hence $\env(\H) \preccurlyeq \K$.
\end{proof}

\subsection{Action of a URS on a $G$-space}

In this paragraph $G$ still denote a locally compact group, and $X$ is a compact $G$-space. Given $\H \in \urs(G)$, we study the properties of the action of elements on $\H$ on the space $X$.

The proof of the following lemma is an easy verification, and we leave it to the reader.

\begin{lem}
If $X$ is a compact $G$-space, $\left\{H \in \sub(G) \, | \, H \, \, \text{fixes a point in} \, \, X\right\}$ is a closed $G$-invariant subset of $\sub(G)$.
\end{lem}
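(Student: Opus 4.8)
The plan is to verify the two assertions separately. Write $\mathcal{F} = \left\{H \in \sub(G) \mid H \text{ fixes a point in } X\right\}$. The $G$-invariance is immediate: if $H \in \mathcal{F}$ fixes $x \in X$ and $g \in G$, then the conjugate $gHg^{-1}$ fixes $gx$, so $gHg^{-1} \in \mathcal{F}$. The content is therefore the closedness of $\mathcal{F}$, and I would prove it not by working directly in $\sub(G)$ but by lifting to the product $\sub(G) \times X$, which is compact since both factors are. Concretely, I introduce the auxiliary set
\[ Z = \left\{(H,x) \in \sub(G) \times X \mid hx = x \text{ for all } h \in H\right\} = \left\{(H,x) \mid H \leq G_x\right\}, \]
and note that $\mathcal{F} = \pi(Z)$, where $\pi : \sub(G) \times X \to \sub(G)$ is the (continuous) first projection. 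Once $Z$ is shown to be closed, it is compact, hence $\mathcal{F} = \pi(Z)$ is a compact, therefore closed, subset of $\sub(G)$.

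The main step is thus to show that $Z$ is closed. Let $(H_i, x_i)$ be a net in $Z$ converging to $(H,x)$, so that $H_i \to H$ in the Chabauty topology on $\sub(G)$ and $x_i \to x$ in $X$. Fix $h \in H$; I want to conclude $hx = x$. Here I would use the standard characterization of Chabauty convergence: for every neighbourhood $U$ of $h$, the set $O(\emptyset; U)$ is a neighbourhood of $H$ in $\sub(G)$ (as $h \in H \cap U$), so eventually $H_i \cap U \neq \emptyset$; from this one extracts a net $h_i \in H_i$ with $h_i \to h$. Since $(H_i, x_i) \in Z$ we have $h_i x_i = x_i$ for all $i$. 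Now pass to the limit: by joint continuity of the action map $G \times X \to X$, the convergence $(h_i, x_i) \to (h,x)$ gives $h_i x_i \to hx$, while $h_i x_i = x_i \to x$. By uniqueness of limits in the Hausdorff space $X$, this forces $hx = x$. As $h \in H$ was arbitrary, $H \leq G_x$, i.e.\ $(H,x) \in Z$, proving $Z$ closed.

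I expect no real obstacle here: the only point requiring a moment's care is the passage from Chabauty convergence $H_i \to H$ to the existence of the approximating net $h_i \in H_i \to h$, which is exactly the half of the Chabauty convergence criterion that the basis $O(K; U_1,\ldots,U_n)$ of the preliminaries encodes, and the use of \emph{joint} (rather than separate) continuity of the action when taking limits. Second countability of $G$, which the paper assumes throughout, lets one replace nets by sequences if preferred, but the argument above works verbatim with nets and needs no metrizability.
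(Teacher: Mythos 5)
Your proof is correct. The paper gives no argument for this lemma (it is explicitly left to the reader as an easy verification), and your route --- realizing the set as the image of the incidence set $Z=\left\{(H,x) : H\leq G_x\right\}$ under the projection from the compact product $\sub(G)\times X$, and checking that $Z$ is closed via Chabauty convergence plus joint continuity of the action --- is the natural way to carry out that verification. The only step worth phrasing a little more carefully is the extraction of a net $h_i\in H_i$ converging to $h$: for a fixed index $i$ the set $H_i\cap U$ may well be empty (the convergence only gives $H_i\cap U\neq\emptyset$ \emph{eventually}), so one should either index the approximating net over pairs $(i,U)$, or argue by contradiction: if $hx\neq x$, separate $hx$ from $x$ by disjoint open sets $V,W$, use joint continuity to find a neighbourhood $U\times W'$ of $(h,x)$ with $U\cdot W'\subseteq V$ and $W'\subseteq W$, and note that eventually $x_i\in W'$ and some $h_i\in H_i\cap U$ give $x_i=h_ix_i\in V\cap W$, a contradiction. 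This is routine and does not affect the validity of your argument.
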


In particular the following definition makes sense.

\begin{defi}
Let $X$ be a compact $G$-space, and $\H \in \urs(G)$. We say that $\H$ fixes a point in $X$ if for some (all) $H \in \H$, there is $x \in X$ such that $h(x)=x$ for all $h \in H$.
\end{defi}

\begin{lem} \label{lem-fix-pt-min}
Let $X$ be a compact $G$-space, $Y \subseteq X$ a closed invariant subset of $X$, and $\H \in \urs(G)$. If there exists $H \in \H$ fixing $x \in X$ such that $\overline{G x} \cap Y \neq \varnothing$, then $\H$ fixes a point in $Y$. 
\end{lem}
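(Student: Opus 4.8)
The plan is to reduce the statement to producing a single subgroup in $\H$ that fixes a point of $Y$, and then to obtain such a subgroup as a Chabauty limit of conjugates of $H$.

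First I would observe that the conclusion ``$\H$ fixes a point in $Y$'' is equivalent to $F_Y \cap \H \neq \varnothing$, where $F_Y = \left\{L \in \sub(G) : L \text{ fixes a point in } Y\right\}$. Indeed, applying the preceding lemma to the compact $G$-space $Y$, the set $F_Y$ is closed and $G$-invariant; so as soon as it meets $\H$, the intersection $F_Y \cap \H$ is a non-empty closed $G$-invariant subset of $\H$, hence all of $\H$ by minimality of the URS. This is precisely the ``(all)'' half of the definition of $\H$ fixing a point in $Y$.

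Next I would construct a witness. Fix $y \in \overline{G x} \cap Y$ and a net $(g_i)$ in $G$ with $g_i x \to y$. Each conjugate $g_i H g_i^{-1}$ belongs to $\H$ (as $\H$ is $G$-invariant) and fixes the point $g_i x$. By compactness of $\sub(G)$ and closedness of $\H$, I would pass to a subnet along which $g_i H g_i^{-1} \to K$ for some $K \in \H$, and then argue that $K$ fixes $y$; since $y \in Y$, this gives $K \in F_Y \cap \H$ and finishes the argument.

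The key point, and the only place requiring care, is the closedness of the fixing relation $R = \left\{(L,z) \in \sub(G) \times X : \ell z = z \text{ for all } \ell \in L\right\}$, which turns the convergence $(g_i H g_i^{-1}, g_i x) \to (K,y)$ into the desired conclusion $(K,y) \in R$. To check that $R$ is closed I would use the standard lower semicontinuity of Chabauty convergence: if $L_i \to L$ in $\sub(G)$, then every $\ell \in L$ is a limit of elements $\ell_i \in L_i$, because for each open neighbourhood $U$ of $\ell$ one has $L \cap U \neq \varnothing$, forcing $L_i \cap U \neq \varnothing$ eventually. Combining $\ell_i z_i = z_i$ with joint continuity of the action $G \times X \to X$ then yields $\ell z = z$ in the limit, so $(L,z) \in R$ whenever $(L_i,z_i) \to (L,z)$ with each $(L_i,z_i) \in R$. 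I expect this Chabauty semicontinuity step to be the main, if minor, obstacle; the remainder is a routine compactness argument together with the minimality of $\H$.
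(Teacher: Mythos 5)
Your proposal is correct and follows essentially the same route as the paper: the paper also takes a net $(g_i)$ with $g_i x \to y$, extracts a Chabauty limit point $K \in \H$ of the conjugates $H^{g_i}$ by compactness, and concludes that $K$ fixes $y$ by upper semi-continuity of the stabilizer map, which is exactly the closedness of your relation $R$. Your extra remarks (the reduction via minimality of $\H$ and the explicit verification of Chabauty lower semicontinuity) only spell out details the paper leaves implicit.
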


\begin{proof}
By assumption there exist $(g_i)$ and $y \in Y$ such that $g_i(x)$ converges to $y$. If $K \in \H$ is a limit point of $(H^{g_i})$ (which exists by compactness), then $K$ fixes $y$ by upper semi-continuity of the stabilizer map. 
\end{proof}

Lemma \ref{lem-fix-pt-min} implies the following:

\begin{lem} \label{lem-fp-Xmin}
If $X$ is a compact $G$-space containing a unique minimal closed $G$-invariant subset $X_{min} \subset X$ (e.g.\ $X$ is proximal), and if $\H \in \urs(G)$ fixes a point in $X$, then $\H$ fixes a point in $X_{min}$.
\end{lem}

\begin{prop} \label{prop-fix-pt-min}
Let $Z$ be a compact $G$-space that is extremely proximal, and $\H \in \urs(G)$. Then either $\H$ fixes a point in $Z$, or all $H \in \H$ act minimally on $Z$. 
\end{prop}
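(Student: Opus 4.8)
The plan is to establish the stated dichotomy by proving its contrapositive through a single compression argument. First I would record the structural fact underlying the statement: by the unnamed lemma preceding the definition of ``$\H$ fixes a point'', the set $F = \{L \in \sub(G) : L \text{ fixes a point in } Z\}$ is a closed $G$-invariant subset of $\sub(G)$. Since $\H$ is minimal, either $\H \subseteq F$ — which is precisely the assertion that $\H$ fixes a point in $Z$ — or $\H \cap F = \varnothing$. It therefore suffices to show that in the second case every $H \in \H$ acts minimally on $Z$.

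So assume no element of $\H$ fixes a point in $Z$, and suppose for contradiction that some $H \in \H$ fails to act minimally. Then there is $x \in Z$ whose $H$-orbit closure $C = \overline{Hx}$ is a proper nonempty closed $H$-invariant subset, $\varnothing \neq C \subsetneq Z$. Since the $G$-action on $Z$ is extremely proximal, $C$ is compressible: there is a net $(g_i)$ in $G$ and a point $z \in Z$ with $g_i(C) \to \{z\}$ in $2^Z$. Using compactness of $\sub(G)$, pass to a subnet along which the conjugates $g_i H g_i^{-1}$ converge to some $K \in \sub(G)$; because $\H$ is closed and $G$-invariant and contains every $g_i H g_i^{-1}$, the limit $K$ lies in $\H$.

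The heart of the argument is to show that $K$ fixes $z$, thereby converting the setwise $H$-invariance of $C$ into pointwise fixation of $z$ by the limiting subgroup; this is the step I expect to require the most care, and it is an elaboration of the upper semicontinuity mechanism used in Lemma \ref{lem-fix-pt-min}. Fix $k \in K$. By Chabauty convergence $g_i H g_i^{-1} \to K$ there are $h_i \in H$ with $g_i h_i g_i^{-1} \to k$. Choosing any $c \in C$, we have $g_i(c) \in g_i(C) \to \{z\}$, so $g_i(c) \to z$, and likewise $(g_i h_i g_i^{-1})(g_i(c)) = g_i(h_i(c)) \in g_i(C)$ converges to $z$ because $h_i(c) \in C$. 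On the other hand, joint continuity of the action $G \times Z \to Z$ applied to $(g_i h_i g_i^{-1}, g_i(c)) \to (k, z)$ gives $(g_i h_i g_i^{-1})(g_i(c)) \to k(z)$. Comparing the two limits yields $k(z) = z$, and since $k \in K$ was arbitrary, $K$ fixes $z$. Thus $K \in \H$ fixes a point in $Z$, contradicting $\H \cap F = \varnothing$. Hence every $H \in \H$ acts minimally, which completes the dichotomy.

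Since $Z$ is not assumed metrizable, I would be careful to carry out every convergence argument with nets and subnets rather than sequences; the compactness of $\sub(G)$ and of $2^Z$ is exactly what legitimizes the passage to a convergent subnet of $(g_i H g_i^{-1})$ and the selection of the elements $h_i \in H$ realizing the limit $k$.
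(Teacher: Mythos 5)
Your proposal is correct and is essentially the paper's argument: the paper applies Lemma \ref{lem-fix-pt-min} to the $G$-space $2^Z$ with $Y = Z$ and $x = C$, and your compression-plus-Chabauty-limit computation is exactly the content of that lemma unpacked in this special case (including the upper semi-continuity step you single out). The only cosmetic difference is that you inline the lemma's proof rather than citing it.
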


\begin{proof}
If there exist $H \in \H$ and a non-empty closed subset $C \subsetneq Z$ that is invariant by $H$, then we may apply Lemma \ref{lem-fix-pt-min} to the space $X = 2^Z$, the subspace $Y = Z$ and the point $x = C$, and we deduce that $\H$ fixes a point in $Z$.
\end{proof}

Recall that given a compact $G$-space $X$, $\mathcal{S}_G^\ast(X)$ stands for the closure in $\sub(G)$ of the set of subgroups $G_x$, $x \in X$.

\begin{lem} \label{lem-urs-acts-min}
Let $X$ be a compact $G$-space. Assume that $K \leq G$ is a closed subgroup of $G$ which acts minimally on $X$ and such that there exists $H \in \mathcal{S}_G^\ast(X)$ with $H \leq K$. Then $\env(\mathcal{S}_G(X)) \leq K$.
\end{lem}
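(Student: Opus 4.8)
The plan is to reduce everything to the minimality statement of Proposition \ref{prop-mini-urs-min} applied with the subgroup $K$ in place of $H$. Since $K$ acts minimally on $X$ by hypothesis, that proposition tells us that $\mathcal{S}_G(X)$ is the \emph{unique} minimal closed $K$-invariant subset of $\mathcal{S}_G^\ast(X)$. This is the structural input that makes the rest routine.

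First I would examine the $K$-orbit closure of the given point $H \in \mathcal{S}_G^\ast(X)$ under the conjugation action. Concretely, since $\mathcal{S}_G^\ast(X)$ is a closed $G$-invariant (hence $K$-invariant) subset of the compact space $\sub(G)$, the closure $\overline{\{kHk^{-1} : k \in K\}}$ is a non-empty compact $K$-invariant subset of $\mathcal{S}_G^\ast(X)$. By compactness it contains a minimal closed $K$-invariant subset, which by the uniqueness above must be $\mathcal{S}_G(X)$. Hence $\mathcal{S}_G(X) \subseteq \overline{\{kHk^{-1} : k \in K\}}$.

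Next I would transfer the containment $H \leq K$ to every element of $\mathcal{S}_G(X)$. Since $H \leq K$ and $K$ is a subgroup, each conjugate $kHk^{-1}$ with $k \in K$ is again contained in $K$. Now let $L \in \mathcal{S}_G(X)$; by the previous step $L$ is a Chabauty limit of such conjugates $k_i H k_i^{-1} \leq K$. For any $x \in L$ there are $x_i \in k_i H k_i^{-1} \leq K$ with $x_i \to x$, and since $K$ is \emph{closed} we get $x \in K$. Thus $L \leq K$ for every $L \in \mathcal{S}_G(X)$. Finally, since $\env(\mathcal{S}_G(X))$ is by definition the closed subgroup generated by all these $L$, and each is contained in the closed subgroup $K$, we conclude $\env(\mathcal{S}_G(X)) \leq K$.

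I do not expect a serious obstacle here; the only point requiring minor care is verifying that a Chabauty limit of subgroups contained in $K$ stays in $K$, which uses precisely that $K$ is closed (so the limit argument at the level of individual elements goes through). Everything else is a direct application of Proposition \ref{prop-mini-urs-min} together with the compactness of $\mathcal{S}_G^\ast(X)$.
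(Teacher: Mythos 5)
Your proof is correct and follows essentially the same route as the paper: both apply Proposition \ref{prop-mini-urs-min} to conclude that $\mathcal{S}_G(X)$ lies in the closure of the $K$-orbit of $H$, and then use that $\sub(K)$ is closed in $\sub(G)$ (your element-by-element Chabauty limit argument is exactly the verification of that fact) to conclude $\mathcal{S}_G(X) \subset \sub(K)$ and hence $\env(\mathcal{S}_G(X)) \leq K$.
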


\begin{proof}
Since $K$ acts minimally on $X$, the closure of the $K$-orbit of $H$ in $\mathcal{S}_G^\ast(X)$ contains $\mathcal{S}_G(X)$ according to Proposition \ref{prop-mini-urs-min}. Since $H \in \sub(K)$ and $\sub(K)$ is a closed subset of $\sub(G)$, we deduce that $\mathcal{S}_G(X) \subset \sub(K)$, and in particular $\env(\mathcal{S}_G(X)) \leq K$.
\end{proof}

\begin{defi}
Let $\H \in \urs(G)$. We say that $\H$ \textbf{comes from an extremely proximal action} if there exists a compact $G$-space $Z$ that is minimal and extremely proximal, and such that $\mathcal{S}_G(Z) = \H$.
\end{defi}

It was shown in \cite{LBMB} that for a discrete group $\Gamma$ with a non-trivial URS $\H$ coming from an extremely proximal action, any non-trivial $\K \in \urs(\Gamma)$ must be \enquote{relatively large} with respect to $\H$ (see \cite[Th.\ 3.10]{LBMB} for a precise statement). Appropriate assumptions on $\Gamma$ and $\H$ further imply that $\H \preccurlyeq \K$ for every non-trivial $\K \in \urs(\Gamma)$ \cite[Cor.\ 3.12]{LBMB}. The following proposition goes in the opposite direction by considering URS's larger than $\H$.

\begin{prop} \label{prop-dicho-urs-EP}
Let $\H \in \urs(G)$ that comes from an extremely proximal action. Let $\K \in \urs(G)$ such that $\H \preccurlyeq \K$ and $\H \neq \K$. Then $\env(\H) \preccurlyeq \K$.
\end{prop}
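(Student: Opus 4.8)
The plan is to realize $\H$ as the stabilizer URS of a minimal extremely proximal $G$-space and then run the dichotomy of Proposition \ref{prop-fix-pt-min} on the \emph{other} URS $\K$. Fix a compact $G$-space $Z$ that is minimal and extremely proximal with $\mathcal{S}_G(Z) = \H$, which exists since $\H$ comes from an extremely proximal action. Applying Proposition \ref{prop-fix-pt-min} to $\K$ gives exactly two alternatives: either every $K \in \K$ acts minimally on $Z$, or $\K$ fixes a point in $Z$. The first alternative will deliver the conclusion at once, and the whole difficulty is to show that the second alternative is incompatible with the hypothesis $\H \neq \K$.

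Suppose first that every $K \in \K$ acts minimally on $Z$. Since $\H \preccurlyeq \K$, every $K \in \K$ contains some $H \in \H \subseteq \mathcal{S}_G^\ast(Z)$. Thus each $K \in \K$ is a closed subgroup acting minimally on $Z$ and containing an element of $\mathcal{S}_G^\ast(Z)$, so Lemma \ref{lem-urs-acts-min} yields $\env(\H) = \env(\mathcal{S}_G(Z)) \leq K$. As $\env(\H)$ is normal in $G$, the statement ``$\env(\H) \leq K$ for every $K \in \K$'' is precisely the relation $\env(\H) \preccurlyeq \K$, which is the desired conclusion.

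It remains to rule out the case where $\K$ fixes a point in $Z$, and this is the main obstacle. I would argue that this case forces $\K \preccurlyeq \H$, and then invoke antisymmetry of $\preccurlyeq$ (it is an order on $\urs(G)$) together with $\H \preccurlyeq \K$ to conclude $\H = \K$, contradicting $\H \neq \K$. By definition of the order $\preccurlyeq$ it suffices to exhibit a single pair $K \leq H$ with $K \in \K$ and $H \in \H$. Consider the set $Z' = \left\{ z \in Z \, : \, K \leq G_z \ \text{for some} \ K \in \K \right\}$. It is non-empty because $\K$ fixes a point, and it is clearly $G$-invariant; it is also closed, using that $\K$ is compact (being closed in $\sub(G)$) together with upper semi-continuity of the stabilizer map: if $z_i \to z$ with $K_i \leq G_{z_i}$, pass to a convergent subnet $K_i \to K \in \K$ and check that $K \leq G_z$. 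By minimality of $Z$ we obtain $Z' = Z$. Now choose a continuity point $z \in Z_0$: there is $K \in \K$ with $K \leq G_z$, and $G_z \in \mathcal{S}_G(Z) = \H$ precisely because $z \in Z_0$. This exhibits the required pair $K \leq G_z$, hence $\K \preccurlyeq \H$, completing the argument.

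The delicate point, and the reason the continuity set $Z_0$ must enter, is that a subgroup $K$ fixing an \emph{arbitrary} point $z \in Z$ only gives $K \leq G_z \in \mathcal{S}_G^\ast(Z)$, and $G_z$ may be strictly larger than every element of $\H$. The minimality argument for $Z'$ is exactly what lets one relocate the fixed point into $Z_0$, where point stabilizers genuinely belong to $\H$; I expect verifying closedness of $Z'$ and this relocation to be the only non-formal steps.
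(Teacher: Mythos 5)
Your proof is correct and follows essentially the same route as the paper's: realize $\H$ as $\mathcal{S}_G(Z)$ for a minimal extremely proximal $Z$, apply the dichotomy of Proposition \ref{prop-fix-pt-min} to $\K$, handle the minimal case via Lemma \ref{lem-urs-acts-min}, and rule out the fixed-point case by deriving $\K \preccurlyeq \H$ and contradicting $\H \neq \K$. The only difference is that you spell out (via the closed invariant set $Z'$ and the continuity set $Z_0$) why ``$\K$ fixes a point in $Z$'' yields $\K \preccurlyeq \mathcal{S}_G(Z)$, a step the paper treats as immediate; your elaboration is a valid justification of it.
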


\begin{proof}
Let $Z$ be a compact $G$-space that is minimal and extremely proximal and such that $\mathcal{S}_G(Z) = \H$. Fix $K \in \K$, and assume that $K$ does not act minimally on $Z$. According to Proposition \ref{prop-fix-pt-min} this implies that the URS $\K$ fixes a point in $Z$, i.e.\ $\K \preccurlyeq \H$. Since moreover $\H, \K$ satisfy $\H \preccurlyeq \K$ by assumption, we deduce that $\H = \K$, which is a contradiction. Therefore $K$ acts minimally on $Z$. Since moreover there exists $H \in \H$ such that $H \leq K$, we are in position to apply Lemma \ref{lem-urs-acts-min}, from which the conclusion follows.
\end{proof}

It should be noted that Proposition \ref{prop-dicho-urs-EP} is false without the extreme proximality assumption, as in general there are plenty of URS's between $\H$ and $\env(\H)$.

\begin{lem} \label{lem-env-act-min-ep}
Let $\H \in \urs(G)$ that comes from an extremely proximal action. Then $\env(\H)$ acts minimally on $\H$. 
\end{lem}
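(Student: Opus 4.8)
The plan is to reduce the statement to showing that $\env(\H)$ acts minimally on the extremely proximal space that realizes $\H$, and then to invoke Proposition \ref{prop-mini-urs-min}. Fix a minimal extremely proximal compact $G$-space $Z$ with $\mathcal{S}_G(Z) = \H$, and write $N = \env(\H)$, recalling that $N$ is a \emph{normal} subgroup of $G$ that contains every $H \in \H$. Once we know that $N$ acts minimally on $Z$, Proposition \ref{prop-mini-urs-min} (applied with $X = Z$ and with the acting subgroup taken to be $N$) immediately yields that $N$ acts minimally on $\mathcal{S}_G(Z) = \H$, which is exactly the desired conclusion. Note that the naive route through Proposition \ref{prop-fix-pt-min} does not help here: since the generic stabilizers $G_{z'}$ ($z' \in Z_0$) fix their base points, the URS $\H = \mathcal{S}_G(Z)$ always fixes a point in $Z$, so that proposition lands us in the non-minimal alternative and carries no information on $N$.

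The heart of the matter is therefore the following dichotomy, which replaces Proposition \ref{prop-fix-pt-min}: \emph{either $N$ acts minimally on $Z$, or $\H$ is a single point} (in which case $N$ trivially acts minimally on the one-point space $\H$). Suppose $N$ does not act minimally on $Z$. Then some orbit closure $Y = \overline{N z_0} \subsetneq Z$ is a nonempty proper closed $N$-invariant subset. By extreme proximality of $Z$, the set $Y$ is compressible, so there are $z \in Z$ and $g_i \in G$ with $g_i Y \to \{z\}$ in $2^Z$. Since $N \trianglelefteq G$, each $g_i Y$ is again $N$-invariant; passing to the limit and using continuity of the $N$-action on $2^Z$ shows that the singleton $\{z\}$ is $N$-invariant, i.e.\ $N$ fixes $z$.

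The main obstacle is to upgrade ``$N$ fixes a single point $z$'' into the degenerate conclusion on $\H$, and here normality does the work. From $N = gNg^{-1} \leq g G_z g^{-1} = G_{gz}$ for every $g \in G$ we get that $N$ fixes the entire orbit $Gz$; since that orbit is dense by minimality of $Z$ and the fixed-point set of $N$ is closed, $N$ acts trivially on $Z$. Thus $N$ lies in the kernel $K$ of the $G$-action on $Z$; conversely $K$ fixes every point, so $K \leq G_{z'}$ for all $z'$. For $z' \in Z_0$ we have $G_{z'} \in \H$, hence $G_{z'} \leq N \leq K \leq G_{z'}$, forcing $G_{z'} = K$ for every $z' \in Z_0$. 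Therefore $\H = \mathrm{cls}\{G_{z'} : z' \in Z_0\} = \{K\}$ is a single point, which completes the dichotomy. Combining the two alternatives gives the lemma; I expect the compression-to-a-fixed-point step together with this normality bootstrap to be the only genuinely delicate points, the remainder being routine bookkeeping with Proposition \ref{prop-mini-urs-min}.
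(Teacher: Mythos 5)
Your proof is correct and follows essentially the same route as the paper: reduce to showing that $N=\env(\H)$ acts minimally on $Z$ and then invoke Proposition \ref{prop-mini-urs-min}. The compression-plus-normality argument you carry out by hand is exactly the content of the paper's Lemma \ref{lem-normal-minimal}, which the paper cites instead of reproving, and your explicit treatment of the degenerate case (a trivially acting $N$ forces $\H$ to be the single point $\{K\}$) just fills in a step the paper leaves implicit when it asserts that $\H$ not being a point makes $N$ act non-trivially on $Z$.
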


\begin{proof}
Let $Z$ be a compact $G$-space that is minimal and extremely proximal and such that $\mathcal{S}_G(Z) = \H$, and let $N = \env(\H)$. Without loss of generality we may assume that $\H$ is not a point, since otherwise there is nothing to prove. This ensures that $N$ acts non-trivially on $Z$. By extreme proximality $N$ must act minimally on $Z$ (see Lemma \ref{lem-normal-minimal}), and therefore also on $\H$ by Proposition \ref{prop-mini-urs-min}.
\end{proof}

\begin{rmq}
The extreme proximality assumption cannot be removed in Lemma \ref{lem-env-act-min-ep}. Indeed it is not true in general that, given $\H \in \urs(G)$, $\H$ remains a URS of $\env(\H)$. Indeed, as explained in \cite{Gla-Wei}, any minimal subshift on two letters gives rise to a URS $\H$ of the lamplighter group $G = C_2 \wr \mathbb{Z}$, such that $\H$ is contained in the Chabauty space $\sub(L)$ of the base group $L = \oplus C_2$. In particular $\env(\H)$ lies inside the abelian group $L$, and it follows that $\env(\H)$ acts trivially on $\H$.
\end{rmq}

\begin{prop} \label{prop-action-urs-same-urs}
Let $\H \in \urs(G)$ that comes from an extremely proximal action, and assume $\H$ is not a point. Then:
\begin{enumerate}[label=(\alph*)]
\item \label{item-sam-urs} The action of $G$ on $\H$ gives rise to the same URS, i.e.\ $\mathcal{S}_G(\H) = \H$.
\item \label{item-sam-urs-faith} If moreover $\H$ comes from a faithful extremely proximal action, then the action of $G$ on $\H$ is faithful.
\end{enumerate}
\end{prop}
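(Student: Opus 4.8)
The plan is to prove both statements by playing the gap property of Proposition \ref{prop-dicho-urs-EP} against the minimality of the envelope action from Lemma \ref{lem-env-act-min-ep}. Throughout I would fix a minimal extremely proximal $G$-space $Z$ with $\mathcal{S}_G(Z) = \H$; since $\H$ is not a point, $Z$ has at least three points (a minimal action on one or two points produces only a one-point URS), so Theorem \ref{thm-ep-sp} applies and any non-trivial normal subgroup of $G$ acts minimally on $Z$ by Lemma \ref{lem-normal-minimal}. For statement \ref{item-sam-urs}, the first observation is that the stabilizer of a point $H$ of $\H$ for the conjugation action is exactly its normalizer $N_G(H)$, which contains $H$. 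Taking $H$ in the domain of continuity $\H_0$ of the stabilizer map for the $G$-action on $\H$, we get $N_G(H) \in \mathcal{S}_G(\H)$ with $H \leq N_G(H)$, so that $\H \preccurlyeq \mathcal{S}_G(\H)$. I would then argue by contradiction: if $\H \neq \mathcal{S}_G(\H)$, then Proposition \ref{prop-dicho-urs-EP} applied to $\K = \mathcal{S}_G(\H)$ yields $\env(\H) \preccurlyeq \mathcal{S}_G(\H)$, i.e.\ $\env(\H)$ is contained in every member of $\mathcal{S}_G(\H)$; in particular $\env(\H) \leq N_G(H)$ for every $H \in \H_0$.

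The next step is to upgrade this to all of $\H$. The set $\{ H \in \H : \env(\H) \leq N_G(H)\}$ is the intersection with $\H$ of the fixed-point set of $\env(\H)$ acting by conjugation on $\sub(G)$, hence closed, and it contains the dense subset $\H_0$, so it equals $\H$. Therefore $\env(\H)$ normalizes every element of $\H$, which is to say that $\env(\H)$ acts trivially on the space $\H$. This contradicts Lemma \ref{lem-env-act-min-ep}, according to which $\env(\H)$ acts minimally on $\H$, since a trivial minimal action forces $\H$ to be a single point. Hence $\mathcal{S}_G(\H) = \H$, proving \ref{item-sam-urs}.

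For statement \ref{item-sam-urs-faith}, assume in addition that the $G$-action on $Z$ is faithful, and let $K_0 \trianglelefteq G$ be the kernel of the $G$-action on $\H$, that is, the set of $g \in G$ normalizing every $H \in \H$. The goal is $K_0 = \{1\}$, and by faithfulness of the action on $Z$ it suffices to show that $K_0$ acts trivially on $Z$. If not, then $K_0$ is a non-trivial normal subgroup of $G$, so it acts minimally on $Z$ by Lemma \ref{lem-normal-minimal}, and hence minimally on $\mathcal{S}_G(Z) = \H$ by Proposition \ref{prop-mini-urs-min}. But by definition $K_0$ fixes every point of $\H$, so its action on $\H$ is trivial, and again a trivial minimal action forces $\H$ to be a point, a contradiction. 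Thus $K_0$ acts trivially on $Z$, and faithfulness gives $K_0 = \{1\}$.

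I expect the conceptual crux to be simply identifying $N_G(H)$ as the point-stabilizer for the conjugation action, which both gives the inequality $\H \preccurlyeq \mathcal{S}_G(\H)$ for free and feeds the gap dichotomy; after that the two arguments are structurally identical, each reducing a putative strict inequality to the envelope (resp.\ the kernel $K_0$) acting both minimally and trivially on $\H$. The only genuinely delicate point should be the passage in \ref{item-sam-urs} from continuity points to all of $\H$, for which one needs the closedness of the fixed-point set of $\env(\H)$ in $\sub(G)$ together with density of $\H_0$; this, and the bookkeeping ruling out the small-cardinality degeneracies of $Z$ so that Lemma \ref{lem-normal-minimal} is available, are the steps I would check most carefully.
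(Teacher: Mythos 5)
Your proof is correct and follows essentially the same route as the paper: the inequality $\H \preccurlyeq \mathcal{S}_G(\H)$ plus Proposition \ref{prop-dicho-urs-EP} and Lemma \ref{lem-env-act-min-ep} for part \ref{item-sam-urs}, and Lemma \ref{lem-normal-minimal} together with Proposition \ref{prop-mini-urs-min} applied to the kernel for part \ref{item-sam-urs-faith}. You merely make explicit two steps the paper leaves implicit (that point-stabilizers for the conjugation action are normalizers, and the density/closedness argument upgrading triviality from $\H_0$ to $\H$), which is fine.
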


\begin{proof}
Write $\K = \mathcal{S}_G(\H)$. By definition we have $\H \preccurlyeq \K$. Argue by contradiction and suppose $\H \neq \K$. Then applying Proposition \ref{prop-dicho-urs-EP}, we deduce that $\env(\H)$ acts trivially on $\H$. But $\env(\H)$ also acts minimally on $\H$ by Lemma \ref{lem-env-act-min-ep}, so we deduce that $\H$ must be a point, a contradiction. This shows \ref{item-sam-urs}. 

For \ref{item-sam-urs-faith}, arguing as in the proof of Lemma \ref{lem-env-act-min-ep} we see that any non-trivial normal subgroup $N$ of $G$ acts minimally on $\H$. Since $\H$ is not a point, we have in particular that $N$ acts non-trivially on $\H$.
\end{proof}

\begin{rmq} \label{rmq-Z-metric}
Proposition \ref{prop-action-urs-same-urs} implies that, as far as our interest lies inside the URS associated to a minimal and extremely proximal action (and not the space $Z$ itself), there is no loss of generality in assuming that $(G,Z)$ is a sub-system of $(G,\sub(G))$. See also Remark \ref{rmq-reform-comes-from-EP}.
\end{rmq}

\subsection{Amenable URS's}

Recall that we say that $\H \in \urs(G)$ is amenable if every $H \in \H$ is amenable. The following lemma already appeared in \cite[Prop.\ 2.21]{LBMB}.

\begin{lem} \label{lem-am-urs-small-bnd}
If $\H \in \urs(G)$ is amenable and $X$ is a $G$-boundary, then $\H \preccurlyeq \mathcal{S}_G(X)$.
\end{lem}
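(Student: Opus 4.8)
The plan is to produce, starting from an amenable $H \in \H$, subgroups $L' \in \H$ and $K \in \K := \mathcal{S}_G(X)$ with $L' \leq K$; by the characterization of the order $\preccurlyeq$ recalled in \S\ref{subsec-gen-urs}, exhibiting a single such pair is exactly $\H \preccurlyeq \K$.

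First I would fix $H \in \H$ and exploit amenability: since $\prob(X)$ is a convex compact $H$-space, $H$ fixes some $\mu \in \prob(X)$. As $X$ is a boundary it is strongly proximal, so there is a net $(g_i)$ in $G$ and a point $x \in X$ with $g_i \mu \to \delta_x$. By compactness of $\sub(G)$ I pass to a subnet along which $g_i H g_i^{-1}$ converges to some $L \in \sub(G)$; since $\H$ is closed and $G$-invariant, each $g_i H g_i^{-1}$ lies in $\H$, hence $L \in \H$. I then claim $L \leq G_x$: given $l \in L$, choose $h_i \in H$ with $g_i h_i g_i^{-1} \to l$; since $h_i$ fixes $\mu$ we have $(g_i h_i g_i^{-1})(g_i \mu) = g_i \mu$, and passing to the limit by joint continuity of the action $G \times \prob(X) \to \prob(X)$ gives $l \cdot \delta_x = \delta_x$, i.e.\ $lx = x$. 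Thus $L \leq G_x$ with $L \in \H$.

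The main obstacle is that the point $x$ delivered by strong proximality need not lie in the continuity locus $X_0$, so $G_x$ need not belong to the minimal stabilizer URS $\K$; a priori we only know $G_x \in \mathcal{S}_G^\ast(X)$. To remedy this I would invoke Proposition \ref{prop-GW-1.2}, by which $\K$ is the unique minimal closed $G$-invariant subset of $\mathcal{S}_G^\ast(X)$; consequently the orbit closure of $G_x$ in $\mathcal{S}_G^\ast(X)$ contains $\K$, so for any chosen $K \in \K$ there is a net $(g_j)$ with $g_j G_x g_j^{-1} \to K$. Refining so that $g_j L g_j^{-1} \to L'$ (again $L' \in \H$ by closedness and $G$-invariance of $\H$), the inclusions $g_j L g_j^{-1} \leq g_j G_x g_j^{-1}$ pass to the Chabauty limit: for $a \in L'$ pick $a_j \in g_j L g_j^{-1}$ with $a_j \to a$, so $a_j \in g_j G_x g_j^{-1}$, and the closedness property of Chabauty convergence forces $a \in K$. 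This yields $L' \in \H$ and $K \in \K$ with $L' \leq K$, which is the desired conclusion $\H \preccurlyeq \mathcal{S}_G(X)$.
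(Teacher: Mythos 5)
Your argument is correct and follows essentially the same route as the paper: amenability gives a fixed measure on $\prob(X)$, strong proximality pushes it to a Dirac mass, and a Chabauty limit of conjugates of $H$ then fixes the corresponding point --- this is exactly the content of Lemmas \ref{lem-fix-pt-min} and \ref{lem-fp-Xmin}, which the paper invokes instead of redoing inline. Your final paragraph merely makes explicit the passage from $G_x \in \mathcal{S}_G^\ast(X)$ to an element of $\mathcal{S}_G(X)$ via Proposition \ref{prop-GW-1.2}, a step the paper treats as immediate in the phrase ``$\H$ fixes a point in $X$, i.e.\ $\H \preccurlyeq \mathcal{S}_G(X)$''.
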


\begin{proof}
Since $\H$ is amenable, $\H$ must fix a point in the compact $G$-space $\prob(X)$. Now $X$ is the unique minimal $G$-invariant subspace of $\prob(X)$ since $X$ is a $G$-boundary, so by Lemma \ref{lem-fp-Xmin} we have that $\H$ fixes a point in $X$, i.e.\ $\H \preccurlyeq \mathcal{S}_G(X)$.
\end{proof}

\begin{prop} \label{prop-compar-urs-bnd}
Let $X$ be a compact minimal $G$-space such that $\H = \mathcal{S}_G(X)$ is amenable, and let $Y$ be a $G$-boundary such that $X$ and $Y$ are disjoint. Then $\env(\H)$ acts trivially on $Y$.

In particular if $\env(\H)$ is co-amenable in $G$, a non-trivial $G$-boundary is never disjoint with $X$.
\end{prop}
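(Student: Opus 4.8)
The plan is to assemble the two structural facts about stabilizer URS's already proved in this section, namely Lemma \ref{lem-am-urs-small-bnd} and Proposition \ref{prop-same-urs}, and then to read off what the resulting order relation means for the action on $Y$.

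First I would set $\K = \mathcal{S}_G(Y)$. Since $\H$ is amenable and $Y$ is a $G$-boundary, Lemma \ref{lem-am-urs-small-bnd} gives $\H \preccurlyeq \K$. As $Y$ is a boundary it is in particular minimal, so $X$ and $Y$ are minimal compact $G$-spaces with $\mathcal{S}_G(X) = \H \preccurlyeq \K = \mathcal{S}_G(Y)$, and they are disjoint by hypothesis. Proposition \ref{prop-same-urs} then applies verbatim and yields $\env(\H) \preccurlyeq \K$.

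The second step is to translate $\env(\H) \preccurlyeq \K$ into a statement about the action. Because $\env(\H)$ is a (normal) subgroup of $G$, the relation $\env(\H) \preccurlyeq \K$ means precisely that $\env(\H)$ is contained in every member of $\K = \mathcal{S}_G(Y)$. In particular $\env(\H) \leq G_y$ for every $y$ in the dense domain of continuity $Y_0$ of the stabilizer map on $Y$, since each such $G_y$ lies in $\mathcal{S}_G(Y)$. Hence every element of $\env(\H)$ fixes all points of $Y_0$; as $Y_0$ is dense in $Y$ and the fixed-point set of a homeomorphism is closed, $\env(\H)$ fixes all of $Y$, i.e.\ acts trivially. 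This gives the main assertion.

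For the \enquote{in particular} clause I would argue by contradiction: suppose $Y$ is a non-trivial $G$-boundary disjoint from $X$. By what precedes, $\env(\H)$ acts trivially on $Y$, so $Y$ is a boundary for the quotient $G/\env(\H)$. Co-amenability of the normal subgroup $\env(\H)$ means $G/\env(\H)$ is amenable, so by Theorem \ref{thm-amean-bnd} all of its boundaries are trivial, forcing $Y$ to be trivial — a contradiction. (Equivalently, one may note that co-amenability upgrades the $\env(\H)$-fixed points in $Y \subset \prob(Y)$ to a $G$-fixed point in $\prob(Y)$, i.e.\ a $G$-invariant probability measure on the non-trivial boundary $Y$, which is impossible.) The whole argument is an assembly of the earlier propositions, so no step is a genuine obstacle; the only points requiring care are the correct reading of $\env(\H) \preccurlyeq \K$ in the second step and the use of density of $Y_0$ to pass from fixing a dense set to acting trivially.
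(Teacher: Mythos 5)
Your argument is correct and follows exactly the route the paper takes: Lemma \ref{lem-am-urs-small-bnd} gives $\H \preccurlyeq \mathcal{S}_G(Y)$, Proposition \ref{prop-same-urs} upgrades this to $\env(\H) \preccurlyeq \mathcal{S}_G(Y)$ under disjointness, and the second claim follows because the action then factors through the amenable quotient $G/\env(\H)$, which has no non-trivial boundaries. The paper's proof is a two-line citation of the same two results; you have merely (and correctly) supplied the details, including the density-of-$Y_0$ step that turns $\env(\H) \preccurlyeq \mathcal{S}_G(Y)$ into triviality of the action.
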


\begin{proof}
The fact that $\env(\H)$ must act trivially on $Y$ follows by applying Lemma \ref{lem-am-urs-small-bnd} and Proposition \ref{prop-same-urs}. Since an amenable group has no non-trivial boundary, the second statement follows.
\end{proof}

Proposition \ref{prop-compar-urs-bnd} says that when $G$ admits an amenable URS whose envelope is co-amenable, a non-trivial $G$-boundary is never disjoint with $X$. This conclusion is not satisfactory for our concerns as it depends on the choice of a space $X$ and not only on $G$. Although there is no hope to get a better conclusion in full generality, the next result, which will play an important role in Section \ref{sec-lattices-urs}, will remove this dependence under an extreme proximality assumption.

We recall from the introduction that we say that $G$ is \textbf{boundary indivisible} if two non-trivial $G$-boundaries are never disjoint. 

\begin{prop} \label{prop-A_G-EP}
Assume that $G$ admits an amenable $\H \in \urs(G)$ that comes from an extremely proximal action, and let $X$ be a non-trivial $G$-boundary.
\begin{enumerate}[label=(\alph*)]
\item \label{item-prop-A-EP} Either $\mathcal{S}_G(X) = \H$, or $\env(\H)$ acts trivially on $X$.
\item \label{item-prop-A-EP-2} Assume that $\env(\H)$ is co-amenable in $G$. Then $\mathcal{S}_G(X) = \H$, and $G$ is boundary indivisible.
\end{enumerate}
\end{prop}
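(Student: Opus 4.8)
The plan is to derive both statements directly from the gap property (Proposition \ref{prop-dicho-urs-EP}) together with the comparison results of \S\ref{subsec-presc-urs}, the amenability of $\H$ entering only through Lemma \ref{lem-am-urs-small-bnd}.

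For \ref{item-prop-A-EP}, I would first invoke Lemma \ref{lem-am-urs-small-bnd}: since $\H$ is amenable and $X$ is a $G$-boundary, $\H \preccurlyeq \mathcal{S}_G(X)$. Writing $\K = \mathcal{S}_G(X)$, there are two cases. If $\H = \K$ we are in the first alternative. Otherwise $\H \neq \K$ with $\H \preccurlyeq \K$, and since $\H$ comes from an extremely proximal action, Proposition \ref{prop-dicho-urs-EP} yields $\env(\H) \preccurlyeq \K$. It then remains to read off what this means for the action: $\env(\H) \preccurlyeq \mathcal{S}_G(X)$ says that the normal subgroup $\env(\H)$ is contained in every element of $\mathcal{S}_G(X)$, in particular $\env(\H) \leq G_x$ for every $x \in X_0$, i.e.\ $\env(\H)$ fixes every point of $X_0$. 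As $X_0$ is dense in $X$ and the fixed-point set of $\env(\H)$ is closed, $\env(\H)$ acts trivially on $X$, which is the second alternative.

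For \ref{item-prop-A-EP-2}, I would first rule out the trivial-action alternative. Suppose $\env(\H)$ acts trivially on $X$. Then $\env(\H)$ fixes every point of $X \subseteq \prob(X)$, hence fixes a point of the convex compact $G$-space $\prob(X)$; co-amenability of $\env(\H)$ then provides a $G$-fixed point in $\prob(X)$, i.e.\ a $G$-invariant probability measure on $X$. But a non-trivial boundary carries no invariant probability measure (its orbit closure in $\prob(X)$ contains a Dirac mass by strong proximality, which would force a global fixed point and hence triviality by minimality), a contradiction. By \ref{item-prop-A-EP} we conclude $\mathcal{S}_G(X) = \H$. For boundary indivisibility, I would note that $\H$ cannot be a point: otherwise $\env(\H)$ would be a normal amenable and co-amenable subgroup, forcing $G$ to be amenable and contradicting the existence of the non-trivial boundary $X$ via Theorem \ref{thm-amean-bnd}. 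Given any two non-trivial $G$-boundaries $X_1, X_2$, the first part of \ref{item-prop-A-EP-2} just established gives $\mathcal{S}_G(X_1) = \H = \mathcal{S}_G(X_2)$, and since this common URS is not a point, the last assertion of Proposition \ref{prop-same-urs} shows $X_1$ and $X_2$ are not disjoint. Hence $G$ is boundary indivisible.

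The routine parts are the translation of the relation $\preccurlyeq$ into statements about fixed points and the standard fact that a non-trivial boundary admits no invariant measure. The substantive input is concentrated entirely in the gap property Proposition \ref{prop-dicho-urs-EP}: its extreme proximality hypothesis is precisely what forbids intermediate URS's strictly between $\H$ and $\env(\H)$ and thereby forces the clean dichotomy of \ref{item-prop-A-EP}. Accordingly, I expect the only genuine subtlety to be making sure that the two alternatives in \ref{item-prop-A-EP} are mutually exclusive in the right way under the co-amenability hypothesis, so that the trivial-action case can be excluded and $\mathcal{S}_G(X) = \H$ pinned down uniformly across all non-trivial boundaries.
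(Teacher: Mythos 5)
Your proof is correct and follows essentially the same route as the paper: Lemma \ref{lem-am-urs-small-bnd} plus the gap property of Proposition \ref{prop-dicho-urs-EP} for part \ref{item-prop-A-EP}, then exclusion of the trivial-action alternative via co-amenability, the observation that $\H$ cannot be a point, and Proposition \ref{prop-same-urs} for part \ref{item-prop-A-EP-2}. The only cosmetic difference is that you rule out the trivial-action case by applying the fixed-point definition of co-amenability directly to $\prob(X)$, whereas the paper factors the action through the amenable quotient $G/\env(\H)$ and invokes Theorem \ref{thm-amean-bnd}; the two are interchangeable.
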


\begin{proof}
\ref{item-prop-A-EP}. Since $\H$ is amenable, we have $\H \preccurlyeq \mathcal{S}_G(X)$ by Lemma \ref{lem-am-urs-small-bnd}. Now if we assume $\H \neq \mathcal{S}_G(X)$, then according to Proposition \ref{prop-dicho-urs-EP} we have $\env(\H) \preccurlyeq \mathcal{S}_G(X)$, which exactly means that $\env(\H)$ acts trivially on $X$.

\ref{item-prop-A-EP-2}. If $\mathcal{S}_G(X) \neq \H$ then the action of $G$ on $X$ factors through an action of $G/\env(\H)$ by \ref{item-prop-A-EP}. But by assumption the latter is amenable, so has no non-trivial boundaries. So it follows that $X$ is trivial, a contradiction. Therefore all non-trivial $G$-boundaries have the same stabilizer URS $\H$. Since moreover $\H$ cannot be a point (because otherwise $G$ would be amenable), the fact that $G$ is boundary indivisible follows from Proposition \ref{prop-same-urs}.
\end{proof}

For a countable group $\Gamma$, the \textbf{Furstenberg URS} of $\Gamma$ is the stabilizer URS associated to the action of $\Gamma$ on its Furstenberg boundary. We refer to \cite{LBMB} for the proof of the following properties.

\begin{prop} \label{prop-background-A_G}
Let $\Gamma$ be a countable group, and $\A_\Gamma$ its Furstenberg URS. Then the following hold:
\begin{enumerate}[label=(\alph*)]
\item $\A_\Gamma$ is amenable, and $\H \preccurlyeq \A_\Gamma$ for every amenable $\H \in \urs(\Gamma)$.
\item If $X$ is a $\Gamma$-boundary, then $\A_\Gamma \preccurlyeq \mathcal{S}_\Gamma(X)$. If moreover there is $x \in X$ such that $\Gamma_x$ is amenable, then $\A_\Gamma = \mathcal{S}_\Gamma(X)$.
\item $\A_\Gamma$ is invariant under $\mathrm{Aut}(\Gamma)$.
\end{enumerate}
\end{prop}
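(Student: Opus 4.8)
The plan is to treat the three assertions separately, after isolating what is genuinely hard. Write $B = \partial_{sp}\Gamma$ for the Furstenberg boundary, so that by definition $\A_\Gamma = \mathcal{S}_\Gamma(B)$. Two of the four claims become immediate once amenability of $\A_\Gamma$ is known: the domination statement in (a), namely $\H \preccurlyeq \A_\Gamma$ for every amenable $\H \in \urs(\Gamma)$, is exactly Lemma \ref{lem-am-urs-small-bnd} applied to the boundary $B$; and the first half of (b), $\A_\Gamma \preccurlyeq \mathcal{S}_\Gamma(X)$ for an arbitrary $\Gamma$-boundary $X$, is the same lemma applied to the (amenable) URS $\A_\Gamma$ and the boundary $X$. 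So everything reduces to (i) the amenability of $\A_\Gamma$, (ii) the ``moreover'' clause of (b), and (iii) the $\Aut(\Gamma)$-invariance.

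The main obstacle is (i): that every $H \in \A_\Gamma$ is amenable, equivalently that $\Gamma_x$ is amenable for every $x \in B$. I would prove this via the operator-algebraic description of the Furstenberg boundary (\cite{KK,BKKO}): $C(B)$ is $\Gamma$-equivariantly the injective envelope of $\mathbb{C}$ in the category of $\Gamma$-operator systems. Injectivity yields a unital completely positive $\Gamma$-equivariant map $E \colon \ell^\infty(\Gamma) \to C(B)$. For fixed $x \in B$ the state $m_x = \mathrm{ev}_x \circ E$ on $\ell^\infty(\Gamma)$ is invariant under left translation by $\Gamma_x$, since $m_x(g \cdot f) = \mathrm{ev}_{g^{-1}x}(E f) = m_x(f)$ for $g \in \Gamma_x$; pulling $m_x$ back along a $\Gamma_x$-equivariant unital embedding $\ell^\infty(\Gamma_x) \hookrightarrow \ell^\infty(\Gamma)$ produces a left-invariant mean on $\Gamma_x$, so $\Gamma_x$ is amenable. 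As this holds for \emph{every} $x \in B$, and as each $H \in \A_\Gamma$ satisfies $H \leq \Gamma_y$ for the limit point $y$ of its defining net, every element of $\A_\Gamma$ is a subgroup of an amenable group, hence amenable. This is the step I would cite rather than reprove.

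For the ``moreover'' clause of (b) I would argue by antisymmetry of $\preccurlyeq$. We already have $\A_\Gamma \preccurlyeq \mathcal{S}_\Gamma(X)$, and since $\A_\Gamma$ is the largest amenable URS (by the domination property), it suffices to show that $\mathcal{S}_\Gamma(X)$ is itself amenable, which gives $\mathcal{S}_\Gamma(X) \preccurlyeq \A_\Gamma$ and hence equality. The hypothesis provides a point with amenable stabilizer; using density of $X_0$ and the inclusion $\Gamma_x^0 \leq \Gamma_x$ one first arranges such a point $x_0$ in the domain of continuity, so that an amenable $\Gamma_{x_0} \in \mathcal{S}_\Gamma(X)$ is available, and by minimality $\mathcal{S}_\Gamma(X)$ is the Chabauty closure of the conjugates $\{g \Gamma_{x_0} g^{-1}\}$. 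The delicate point — and where I expect the real work to lie — is that every such Chabauty limit stays amenable, since amenability is not closed under limits in general. I would try to control these limits through the universal factor map $\pi \colon B \to X$, using that $\Gamma_b \leq \Gamma_{\pi(b)}$ and that all stabilizers of $B$ are amenable by step (i); transferring amenability from a single stabilizer to the entire URS of $X$ is the technical heart of (b).

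Finally, (iii) is a soft functoriality argument. An automorphism $\alpha \in \Aut(\Gamma)$ induces a homeomorphism $H \mapsto \alpha(H)$ of $\sub(\Gamma)$ carrying URS's to URS's. Precomposing the $\Gamma$-action on $B$ by $\alpha$ yields a $\Gamma$-system $B^\alpha$ that is again minimal and strongly proximal and satisfies the same universal property as $B$ (any boundary $Y$ gives a boundary $Y^{\alpha^{-1}}$, and the universal map $B \to Y^{\alpha^{-1}}$ is a $\Gamma$-map $B^\alpha \to Y$), so $B^\alpha \cong B$ as $\Gamma$-boundaries. The stabilizer of $x$ in $B^\alpha$ is $\alpha^{-1}(\Gamma_x)$, whence $\mathcal{S}_\Gamma(B^\alpha) = \alpha^{-1}(\A_\Gamma)$; comparing with $\mathcal{S}_\Gamma(B^\alpha) = \mathcal{S}_\Gamma(B) = \A_\Gamma$ gives $\alpha^{-1}(\A_\Gamma) = \A_\Gamma$. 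The single genuinely hard input is the amenability in step (i); the remainder is bookkeeping around Lemma \ref{lem-am-urs-small-bnd}, antisymmetry of $\preccurlyeq$, and the universal property of $\partial_{sp}\Gamma$.
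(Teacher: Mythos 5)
The paper does not actually prove this proposition in the text --- it is quoted as background with a pointer to \cite{LBMB} --- so there is no in-paper argument to compare against line by line; I will assess your proof on its own terms. Your reduction is sensible, and three of the four claims are handled correctly: the amenability of every $H \in \A_\Gamma$ via $\Gamma$-injectivity of $C(\partial_{sp}\Gamma)$ and the invariant mean on $\ell^\infty(\Gamma)$ is the standard \cite{BKKO} argument (and upper semi-continuity does place every element of $\A_\Gamma$ inside some point stabilizer); the two domination statements are indeed immediate from Lemma \ref{lem-am-urs-small-bnd}; and the twisting argument for $\Aut(\Gamma)$-invariance is correct (alternatively, once (a) is known, ``the largest amenable URS'' is manifestly $\Aut(\Gamma)$-invariant).

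The genuine gap is in the ``moreover'' clause of (b), and you have half-diagnosed it yourself. Two steps fail as written. First, ``using density of $X_0$ and the inclusion $\Gamma_x^0 \leq \Gamma_x$ one first arranges such a point $x_0$ in the domain of continuity'' is not justified: the hypothesis gives one point $x$ with amenable stabilizer, and nothing forces any point of $X_0$ to have amenable stabilizer --- stabilizers of nearby continuity points are not contained in conjugates of $\Gamma_x$. Second, even granting that, your route requires every Chabauty limit of conjugates of $\Gamma_{x_0}$ to be amenable, which, as you note, is not a closed condition; you propose to ``control these limits'' via $\pi\colon B \to X$, but the inclusion $\Gamma_b \leq \Gamma_{\pi(b)}$ bounds stabilizers of $X$ from \emph{below} by amenable groups, which gives nothing. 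The fix is to abandon the attempt to prove that all of $\mathcal{S}_\Gamma(X)$ is amenable: since $\preccurlyeq$ between URS's only requires a single containment $K \leq H$ with $K \in \mathcal{S}_\Gamma(X)$ and $H \in \A_\Gamma$, it suffices to observe that the set of subgroups fixing a point in $\prob(\partial_{sp}\Gamma)$ is closed and conjugation-invariant in $\sub(\Gamma)$; it contains $\Gamma_x$ by amenability, hence contains the orbit closure of $\Gamma_x$ in $\mathcal{S}^\ast_\Gamma(X)$, hence contains $\mathcal{S}_\Gamma(X)$ by Proposition \ref{prop-GW-1.2}. Lemma \ref{lem-fp-Xmin} then upgrades the fixed measure to a fixed point in $\partial_{sp}\Gamma$, which is exactly the statement $\mathcal{S}_\Gamma(X) \preccurlyeq \A_\Gamma$, and antisymmetry of $\preccurlyeq$ concludes. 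With that replacement the proof is complete; without it, clause (b) is not established.
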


\begin{prop} \label{prop-furst-urs-inside-env}
Let $\Gamma$ be a countable group, and let $\Lambda = \env(\A_\Gamma)$ be the envelope of the Furstenberg URS of $\Gamma$. Then $\Lambda$ acts minimally on $\A_\Gamma$, and $\A_\Gamma = \A_{\Lambda}$.
\end{prop}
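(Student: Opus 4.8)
The plan is to realize the Furstenberg URS as the stabilizer URS of a boundary action and then transport minimality. Write $X = \partial_{sp}\Gamma$, so that $\A_\Gamma = \mathcal{S}_\Gamma(X)$ by definition, and recall that $\Lambda = \env(\A_\Gamma)$ is a normal subgroup of $\Gamma$ all of whose generators lie in some $H \in \A_\Gamma$, so that $\A_\Gamma \subseteq \sub(\Lambda)$. If $\A_\Gamma$ is a point then $\A_\Gamma = \{\mathrm{Rad}(\Gamma)\}$ and $\Lambda = \mathrm{Rad}(\Gamma)$ is amenable, so both assertions are immediate ($\Lambda$ acts on a one-point space, and $\A_\Lambda = \{\mathrm{Rad}(\Gamma)\} = \A_\Gamma$); hence I assume $\A_\Gamma$ non-trivial. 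The heart of the argument is to prove that $\Lambda$ acts minimally on $X$: granting this, Proposition~\ref{prop-mini-urs-min} applied to the subgroup $\Lambda \leq \Gamma$ acting minimally on $X$ gives at once that $\Lambda$ acts minimally on $\mathcal{S}_\Gamma(X) = \A_\Gamma$, which is the first conclusion.

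To obtain minimality on $X$ I first note that $\Lambda$ acts non-trivially there. Indeed, every $H \in \A_\Gamma$ equals $\Gamma_x$ for some $x \in X_0$, and $\Gamma_x \leq \Lambda$; so if $\Lambda$ acted trivially on $X$, each $\Gamma_x$ would lie in the kernel $K$ of $\Gamma \acts X$, whence $\Gamma_x = K$ for all $x \in X_0$ (one always has $K \leq \Gamma_x$), forcing $\A_\Gamma = \{K\}$ to be a point, a contradiction. Next $\fix_X(\Lambda)$ is closed and, since $\Lambda \trianglelefteq \Gamma$, also $\Gamma$-invariant; by minimality of the $\Gamma$-action on $X$ it is empty or all of $X$, and the latter is excluded, so $\fix_X(\Lambda) = \emptyset$. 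The passage from this to full minimality of the $\Lambda$-action is the classical fact of boundary theory that a normal subgroup acting non-trivially on a $G$-boundary acts minimally on it (Furstenberg; see Glasner, \emph{Proximal Flows}): its proof combines the above $\fix$-dichotomy with strong proximality of the $\Gamma$-action on $X$, concretely with the fact that the Dirac masses form the unique $\Gamma$-minimal subset of $\prob(X)$, through which a proper $\Lambda$-minimal subset is compressed to a contradiction. I expect this minimality-transfer step to be the main obstacle, and I stress that strong proximality enters essentially here: the remark following Lemma~\ref{lem-env-act-min-ep} shows that for a general URS $\H$ the envelope $\env(\H)$ may act trivially on $\H$, so no order-theoretic argument can substitute for the boundary input.

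It remains to prove $\A_\Gamma = \A_\Lambda$, which I deduce from antisymmetry of the order $\preccurlyeq$ on $\urs(\Gamma)$. On one side, by the previous step $\A_\Gamma$ is a minimal $\Lambda$-invariant subset of $\sub(\Lambda)$, i.e.\ a URS of $\Lambda$, and it is amenable; hence $\A_\Gamma \preccurlyeq \A_\Lambda$ by the maximality of the Furstenberg URS of $\Lambda$ among amenable URS's (Proposition~\ref{prop-background-A_G}). On the other side, $\A_\Lambda$ is invariant under $\mathrm{Aut}(\Lambda)$ by Proposition~\ref{prop-background-A_G}, and conjugation by elements of $\Gamma$ induces automorphisms of the normal subgroup $\Lambda$; therefore $\A_\Lambda$ is a $\Gamma$-invariant subset of $\sub(\Lambda) \subseteq \sub(\Gamma)$, and being $\Lambda$-minimal it is a fortiori $\Gamma$-minimal, so $\A_\Lambda \in \urs(\Gamma)$. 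Since $\A_\Lambda$ is amenable, maximality of $\A_\Gamma$ among amenable URS's of $\Gamma$ (Proposition~\ref{prop-background-A_G}) yields $\A_\Lambda \preccurlyeq \A_\Gamma$. Both $\A_\Gamma$ and $\A_\Lambda$ then lie in $\urs(\Gamma)$ and satisfy $\A_\Gamma \preccurlyeq \A_\Lambda \preccurlyeq \A_\Gamma$, so $\A_\Gamma = \A_\Lambda$ by antisymmetry of $\preccurlyeq$.
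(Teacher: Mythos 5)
Your proof breaks exactly at the step you flag as the main obstacle. The ``classical fact'' you invoke --- that a normal subgroup acting non-trivially on a $G$-boundary acts minimally on it --- is false. The correct classical dichotomy is: for $N$ normal in $G$ and $X$ a $G$-boundary, either $N$ acts trivially on $X$ or $N$ fixes no probability measure on $X$ (if $\mu \in \prob(X)$ is $N$-fixed, then $\prob(X)^N$ is a non-empty closed convex $G$-invariant set, hence contains a Dirac mass by strong proximality, hence all of $X$ by minimality). Upgrading this to minimality of the $N$-action requires $N$ to be co-amenable in $G$, which is not assumed in this proposition. A counterexample to your statement: for $G = G_1 \times G_2$ and non-trivial $G_i$-boundaries $X_i$, the product $X_1 \times X_2$ is a $G$-boundary on which the normal subgroup $G_1 \times 1$ acts non-trivially but preserves every slice $X_1 \times \{y\}$. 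Worse, the intermediate claim itself --- that $\Lambda = \env(\A_\Gamma)$ acts minimally on $\partial_{sp}\Gamma$ --- is false in general, so no repair of the transfer step can rescue this route. Take $\Gamma = \Gamma_1 \times F_2$ with $\A_{\Gamma_1}$ continuous. The Gromov boundary $\partial F_2$ is a non-trivial $\Gamma$-boundary with $\mathcal{S}_\Gamma(\partial F_2) = \{\Gamma_1 \times 1\}$, so Proposition~\ref{prop-background-A_G} gives $\A_\Gamma \preccurlyeq \{\Gamma_1 \times 1\}$ and hence $\Lambda \leq \Gamma_1 \times 1$; thus $\Lambda$ acts trivially on the non-trivial factor $\partial F_2$ of $\partial_{sp}\Gamma$ and cannot act minimally on $\partial_{sp}\Gamma$, even though $\A_\Gamma$ is not a point. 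Finally, since your proof of $\A_\Gamma \preccurlyeq \A_\Lambda$ requires $\A_\Gamma$ to be a URS of $\Lambda$ --- i.e.\ precisely the minimality you have not established --- the second half of your argument collapses together with the first.

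The repair is to reverse the logical order, which is what the paper does: the equality $\A_\Gamma = \A_\Lambda$ is proved first, and minimality of the $\Lambda$-action on $\A_\Gamma$ is then automatic because $\A_\Lambda$ is by definition a URS of $\Lambda$. The direction you could not reach, $\A_\Gamma \preccurlyeq \A_\Lambda$, does not need $\A_\Gamma$ to be $\Lambda$-minimal: the domination property of the Furstenberg URS established in \cite{LBMB} applies to any closed invariant collection of amenable subgroups, not only to URS's, and $\A_\Gamma$ is a closed $\Lambda$-invariant subset of $\sub(\Lambda)$ consisting of amenable subgroups. Your argument for the reverse inequality $\A_\Lambda \preccurlyeq \A_\Gamma$ (via $\mathrm{Aut}(\Lambda)$-invariance of $\A_\Lambda$ and maximality of $\A_\Gamma$ among amenable URS's of $\Gamma$) is correct and is the same as the paper's. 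The moral is that minimality of $\Lambda$ on $\A_\Gamma$ and minimality of $\Lambda$ on $\partial_{sp}\Gamma$ are genuinely different assertions, and only the former is true in general.
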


\begin{proof}
The conjugation action of $\Gamma$ on the normal subgroup $\Lambda = \env(\A_\Gamma)$ induces a map $\Gamma \rightarrow \mathrm{Aut}(\Lambda)$. Since $\A_{\Lambda}$ is invariant under $\mathrm{Aut}(\Lambda)$ by Proposition \ref{prop-background-A_G}, it is in particular $\Gamma$-invariant. Moreover the action of $\Gamma$ on $\A_{\Lambda}$ is clearly minimal since it is already the case for $\Lambda$. Therefore $\A_{\Lambda}$ is an amenable URS of $\Gamma$, so it follows that $\A_{\Lambda} \preccurlyeq \A_\Gamma$ since $\A_\Gamma$ is larger than any amenable URS of $\Gamma$. On the other hand $\A_\Gamma$ is a closed and $\Lambda$-invariant subset of $\sub(\Lambda)$ consisting of amenable subgroups, so by the domination property applied to $\A_{\Lambda}$ we must have $\A_\Gamma \preccurlyeq \A_{\Lambda}$. Equality follows.
\end{proof}

\begin{rmq} \label{rmq-reform-comes-from-EP}
When $\envA$ is co-amenable in $\Gamma$, the fact that $\A_\Gamma$ comes from a faithful and extremely proximal action is equivalent to saying that the $\Gamma$-action on $\A_\Gamma$ is faithful and extremely proximal. The direct implication is consequence of Proposition \ref{prop-action-urs-same-urs}, and the converse follows from Proposition \ref{prop-A_G-EP}. This gives us an intrinsic reformulation of the assumption of Theorem \ref{thm-intro-URS-lattice-prod} inside the Chabauty space of $\Gamma$.
\end{rmq}

\section{Extremely proximal actions} \label{sec-epntf}

If $\X$ is a Hausdorff $\Gamma$-space and $U \subset \X$, we denote by $\Gamma_U$ the set of elements of $\Gamma$ acting trivially on $\X \setminus U$. We say that the action of $\Gamma$ on $\X$ is \textbf{micro-supported} if $\Gamma_U$ is non-trivial for every non-empty open set $U$. 

We will need the following easy lemma.

\begin{lem} \label{lem-mic-supp-not-solv}
Assume that the action of $\Gamma$ on $\X$ is micro-supported, and let $U$ be a non-empty open set. Then $\Gamma_U$ is not solvable.
\end{lem}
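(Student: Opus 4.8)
The plan is to deduce non-solvability from a single structural fact — that a group acting in a micro-supported way on a Hausdorff space can never be abelian — and then to feed this fact into the derived series of $\Gamma_U$.

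First I would isolate the following commutator lemma, which is the real engine: \emph{if a group $H$ of homeomorphisms of $\X$ has the property that for every non-empty open $W' \subseteq W$ there is a non-trivial element of $H$ whose set of moved points is contained in $W'$, then $H$ is non-abelian.} To see this, pick $a \in H \setminus \{1\}$ supported in $W$; since $a$ moves some point $x$, Hausdorffness of $\X$ together with continuity of $a$ let me find a non-empty open $V \subseteq W$ with $a(V) \cap V = \emptyset$. Choosing a non-trivial $b \in H$ supported in $V$, the conjugate $aba^{-1}$ is supported in $a(V)$, which is disjoint from $V$; hence $aba^{-1}$ and $b$ have disjoint supports, so $[a,b] = (aba^{-1})b^{-1}$ acts as $aba^{-1} \neq \{1\}$ on $a(V)$ and is therefore non-trivial, proving $H$ non-abelian.

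The second, and conceptually crucial, step is to observe that the micro-support property is inherited by the \emph{whole} derived series of $\Gamma_U$. Writing $H_W = \{\, h \in H : h \text{ fixes } \X \setminus W \text{ pointwise} \,\}$, I would prove by induction on $n$ the statement $P(n)$: for every non-empty open $W \subseteq U$ one has $(\Gamma_U^{(n)})_W \neq \{1\}$. The base case $P(0)$ is exactly the micro-support hypothesis, since $(\Gamma_U)_W = \Gamma_W$. For the inductive step, note that $P(n)$ says precisely that for each non-empty open $W \subseteq U$ the group $(\Gamma_U^{(n)})_W$ itself satisfies the hypothesis of the commutator lemma on $W$, because $\bigl((\Gamma_U^{(n)})_W\bigr)_{W'} = (\Gamma_U^{(n)})_{W'}$ for every open $W' \subseteq W$; hence $(\Gamma_U^{(n)})_W$ is non-abelian, and a non-trivial commutator of two of its elements lies in $[\Gamma_U^{(n)}, \Gamma_U^{(n)}] = \Gamma_U^{(n+1)}$ while remaining supported in $W$. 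This produces a non-trivial element of $(\Gamma_U^{(n+1)})_W$, giving $P(n+1)$. Taking $W = U$, the statement $P(n)$ forces $\Gamma_U^{(n)} \neq \{1\}$ for all $n$, so $\Gamma_U$ is not solvable.

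I expect the main obstacle to be resisting the natural but doomed attempt to embed a rigid stabilizer $\Gamma_V$ into the commutator subgroup $[\Gamma_U, \Gamma_U]$: the abelianization of $\Gamma_U$ can genuinely detect such elements, so no such containment holds in general (and mere doubling embeddings preserve, rather than increase, derived length). The right move, which the argument above exploits, is weaker but sufficient — one only needs that $[\Gamma_U, \Gamma_U]$ remains micro-supported, which holds simply because a commutator of elements supported in a small set $W$ is again supported in $W$. Once this is recognised, the induction runs with no further difficulty.
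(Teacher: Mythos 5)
Your proof is correct and follows essentially the same route as the paper: both arguments show by induction that every term of the derived series of $\Gamma_U$ remains micro-supported, using the disjoint-support commutator trick (the paper evaluates $[\lambda_1,\lambda_2]$ on $W$ to see it equals $\lambda_2^{-1}$ there, while you evaluate $[a,b]$ on $a(V)$ — the same computation). Your extra packaging of the key step as a standalone "commutator lemma" is just a presentational difference.
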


\begin{proof}
Assume that $\Lambda$ is a subgroup of $\Gamma_U$ whose action on $U$ is micro-supported, and let $V$ be a non-empty open subset of $U$. By assumption there exists a non-trivial $\lambda_1 \in \Lambda_V$, so that we may find an open set $W \subset V$ such that $W$ and $\lambda_1(W)$ are disjoint. For $\lambda_2 \in \Lambda_W$, the commutator $[\lambda_1, \lambda_2]$ coincides with $\lambda_2^{-1}$ on $W$, and is therefore non-trivial provided that $\lambda_2$ is non-trivial. It follows by induction that if $\Gamma_{U,n}$ is the $n$-th term of the derived series of $\Gamma_U$, then the action of $\Gamma_{U,n}$ on $U$ is micro-supported. In particular $\Gamma_{U,n}$ is never trivial, and $\Gamma_U$ is not solvable.
\end{proof}

In this section we will consider the following setting:

\medskip

\textbf{(EP)} \textit{$\Gamma$ is a discrete group, $Z$ is a compact $\Gamma$-space, and the action of $\Gamma$ on $Z$ is faithful, minimal and extremely proximal. In order to avoid trivialities, we assume that $Z$ has at least three points.}

\medskip

Unless specified otherwise, in the remaining of this section $\Gamma$ and $Z$ will be assumed to satisfy (EP). Our goal is to derive various properties on the group $\Gamma$ that will be used in later sections.

\begin{lem} \label{lem-normal-minimal}
Let $N \leq \mathrm{Homeo}(Z)$ be a non-trivial subgroup that is normalized by $\Gamma$. Then $N$ acts minimally and does not fix any probability measure on $Z$.
\end{lem}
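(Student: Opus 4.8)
The plan is to isolate one key observation first: under (EP) the subgroup $N$ cannot have a global fixed point in $Z$. To see this I would consider $\fix(N) = \{z \in Z : n(z) = z \text{ for all } n \in N\}$, which is closed. Since $\Gamma$ normalizes $N$, for $\gamma \in \Gamma$, $n \in N$ and $z \in \fix(N)$ one has $n(\gamma z) = \gamma(\gamma^{-1} n \gamma)(z) = \gamma z$, so $\fix(N)$ is $\Gamma$-invariant. If it were non-empty, minimality of the $\Gamma$-action would force $\fix(N) = Z$, i.e.\ $N$ would act trivially; as $N \leq \mathrm{Homeo}(Z)$ this means $N = \{1\}$, contradicting non-triviality. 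Hence $N$ fixes no point of $Z$.

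For minimality I would argue by contradiction. Suppose $C \subsetneq Z$ is a non-empty closed $N$-invariant subset. The same normalization computation shows that for every $\gamma \in \Gamma$ the translate $\gamma(C)$ is again $N$-invariant, since $n(\gamma C) = \gamma(\gamma^{-1} n \gamma)(C) = \gamma(C)$. Extreme proximality (using $C \neq Z$) provides a net $(\gamma_i)$ in $\Gamma$ with $\gamma_i(C) \to \{x\}$ in $2^Z$ for some $x \in Z$. Now fix $n \in N$: since each $\gamma_i(C)$ is $N$-invariant we have $n(\gamma_i C) = \gamma_i(C)$, while by continuity of the map $A \mapsto n(A)$ on $2^Z$ we also have $n(\gamma_i C) \to \{n(x)\}$. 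Uniqueness of limits in the compact Hausdorff space $2^Z$ gives $n(x) = x$, and as $n$ was arbitrary, $x \in \fix(N)$, contradicting the previous paragraph. Therefore $N$ acts minimally.

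For the non-existence of an invariant probability measure I would invoke strong proximality: since $Z$ has at least three points, extreme proximality implies that the $\Gamma$-action on $Z$ is strongly proximal by Theorem \ref{thm-ep-sp}. Suppose $\mu \in \prob(Z)$ is $N$-invariant. The set $\prob(Z)^N$ of $N$-invariant measures is closed, convex, and $\Gamma$-invariant, the last point again by the normalization identity $n_*(\gamma_* \mu) = \gamma_*((\gamma^{-1} n \gamma)_* \mu) = \gamma_* \mu$. Strong proximality yields a Dirac mass $\delta_x$ in the closure of the $\Gamma$-orbit of $\mu$, so $\delta_x \in \prob(Z)^N$; but $N$-invariance of $\delta_x$ means $N$ fixes $x$, contradicting the key observation once more.

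The normalization identities and the continuity of the actions on $2^Z$ and $\prob(Z)$ are routine; the one step demanding genuine care is the compression argument in the second paragraph, namely verifying that the singleton to which a proper closed $N$-invariant set is compressed is itself $N$-fixed. This is precisely where the interplay between $\Gamma$ normalizing $N$ (so that translates stay $N$-invariant) and the uniqueness of limits in $2^Z$ does the work, and I expect it to be the main conceptual obstacle.
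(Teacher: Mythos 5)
Your proof is correct and follows essentially the same route as the paper's: compress a proper closed $N$-invariant set to a singleton, use the normalization to see that the limit point is $N$-fixed, then use $\Gamma$-invariance of $\fix(N)$ and minimality to contradict non-triviality of $N$, with strong proximality (via Theorem \ref{thm-ep-sp}) handling the invariant measure. You have merely reorganized the argument by isolating the "no $N$-fixed point" observation up front and filled in the continuity/uniqueness-of-limits details that the paper leaves implicit.
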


\begin{proof}
Assume there exists $C \subsetneq Z$ that is closed and $N$-invariant. Since $C$ is compressible and $N$ is normalized by $\Gamma$, wee see that $N$ has a fixed point in $Z$. Now the set of $N$-fixed points is $\Gamma$-invariant, so it has to be the entire $Z$ by minimality, and $N$ is trivial. The same argument shows the absence of $N$-invariant probability measure on $Z$, since an extremely proximal action is also strongly proximal by Theorem \ref{thm-ep-sp}.
\end{proof}

In all this section the terminology \textit{topologically free} (see Definition \ref{def-stab-urs}) has to be understood with $\Gamma$ viewed as a discrete group. Therefore that the action is not topologically free means that there exists $\gamma \neq 1$ which acts trivially on a non-empty open subset of $Z$.

\begin{lem} \label{lem-ep-ms}
If the action of $\Gamma$ on $Z$ is not topologically free, then it is micro-supported.
\end{lem}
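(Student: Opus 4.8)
The plan is to exploit extreme proximality to manufacture, inside any prescribed open set, a non-trivial element by conjugating a single element that fixes some open set. First I would record what the hypothesis supplies: since the action is not topologically free, there is $\gamma \neq 1$ and a non-empty open set $W \subseteq Z$ fixed pointwise by $\gamma$. Setting $C = Z \setminus W$, this says exactly that $\gamma$ acts trivially on $Z \setminus C = W$, and $C \subsetneq Z$ is a proper closed subset, hence compressible by extreme proximality.

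Next I would upgrade the bare compressibility of $C$ into compressibility into an \emph{arbitrary} open target. By definition of compressibility there is a point $z \in Z$ such that $C$ can be pushed into every neighbourhood of $z$. Given a non-empty open set $U$, minimality of the action furnishes $h \in \Gamma$ with $h(z) \in U$; then $h^{-1}(U)$ is an open neighbourhood of $z$, so there is $g_0 \in \Gamma$ with $g_0(C) \subseteq h^{-1}(U)$, and hence $g := h g_0$ satisfies $g(C) \subseteq U$. Thus for every non-empty open $U$ there is $g \in \Gamma$ with $g(C) \subseteq U$.

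Then I would conjugate and read off the conclusion. With such a $g$, the element $\gamma' = g \gamma g^{-1}$ fixes $g(W)$ pointwise, and $g(W) = Z \setminus g(C) \supseteq Z \setminus U$; consequently $\gamma'$ acts trivially on $Z \setminus U$, i.e.\ $\gamma' \in \Gamma_U$, while $\gamma' \neq 1$ because $\gamma \neq 1$. Since $U$ was an arbitrary non-empty open set, this shows $\Gamma_U \neq \{1\}$ for every such $U$, which is precisely the assertion that the action is micro-supported.

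The only genuinely delicate point is the passage from the single compression point $z$ provided by the definition of extreme proximality to compression into an arbitrary open set $U$; the conjugation step and the inclusion $g(W) \supseteq Z \setminus U$ are then purely formal. That delicate step rests on combining minimality with the fact that $\Gamma$ acts by homeomorphisms on the hyperspace $2^Z$, and I expect no real obstacle there. Note that neither the hypothesis that $Z$ has at least three points nor strong proximality (Theorem \ref{thm-ep-sp}) is needed for this particular lemma.
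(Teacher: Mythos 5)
Your proof is correct and follows essentially the same route as the paper: conjugate an element fixing an open set $W$ by some $g$ compressing $Z \setminus W$ into the target $U$. The paper's proof simply writes down such a $g$ directly, whereas you additionally spell out (correctly, via minimality) why compressibility towards a single point upgrades to compression into an arbitrary non-empty open set.
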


\begin{proof}
Let $U$ be a non-empty open subset of $Z$. Let $\gamma$ be a non-trivial element such that there is a non-empty open set $V$ on which $\gamma$ acts trivially, and let $g \in \Gamma$ such that $g(X \setminus V) \subset U$. Then the non-trivial element $g \gamma g^{-1}$ acts trivially outside $U$, so $\Gamma_U$ is non-trivial.
\end{proof}

\begin{defi}
Let $\Gamma^0$ be the subgroup of $\Gamma$ generated by the elements $\gamma \in \Gamma$ such that $\fix(\gamma)$ has non-empty interior. 
\end{defi}

\begin{rmq}
When $\Gamma$ is a countable group, $\Gamma^0$ is also equal to the envelope of the URS $\mathcal{S}_\Gamma(Z)$ by Lemma \ref{lem-equiv-fix-pt}.
\end{rmq}

Recall that the \textbf{monolith} $\mon(\Gamma)$ is the intersection of all non-trivial normal subgroups of $\Gamma$. We say $\Gamma$ is \textbf{monolithic} if $\mon(\Gamma)$ is non-trivial.

\begin{prop} \label{prop-core-deriv0}
Assume that the action of $\Gamma$ on $Z$ is not topologically free. Then the following hold:
\begin{enumerate}[label=(\alph*)]
	\item \label{item-comm-E} The commutators $[\gamma_1,\gamma_2]$, where $\fix(\gamma_1) \cap \fix(\gamma_2)$ has non-empty interior, generate $[\Gamma^0,\Gamma^0]$.
	\item \label{item-E-mon} $\Gamma$ is monolithic, and one has $\mon(\Gamma) = [\Gamma^0,\Gamma^0]$.
	\item \label{item-N-centr-triv} Any non-trivial normal subgroup of $\Gamma$ has trivial centralizer.
	\item \label{item-M-simple} If the action of $[\Gamma^0,\Gamma^0]$ on $Z$ is extremely proximal, then $[\Gamma^0,\Gamma^0]$ is a simple group.
	\item \label{item-Gamma-virt-s} $\Gamma$ is virtually simple if and only if $\Gamma^0$ has finite index in $\Gamma$ and finite abelianization.
\end{enumerate}
\end{prop}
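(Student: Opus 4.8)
The plan is to make everything flow from one workhorse, a \emph{commutator lemma}: for every nontrivial normal subgroup $N \trianglelefteq \Gamma$ and every open $O$ with $\overline{O} \neq Z$, one has $[\Gamma_O, \Gamma_O] \subseteq N$ (recall $\Gamma_O$ is the rigid stabilizer, the elements acting trivially off $O$). I would prove it by the standard double-commutator trick: for $1 \neq n \in N$ the element $a = [n,g] = ngn^{-1}g^{-1}$ lies in $N$ by normality, and if $g \in \Gamma_O$ with $n(O)\cap O = \emptyset$ then $a$ coincides with $g^{-1}$ on $O$ and is trivial on $Z \setminus (O \cup n(O))$; hence for $h \in \Gamma_O$ one checks $[a,h] = [g^{-1},h]$ globally, so $[g^{-1},h]\in N$, and letting $g,h$ vary gives $[\Gamma_O,\Gamma_O]\subseteq N$. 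Extreme proximality upgrades the movability hypothesis to all $O$ with $\overline O \neq Z$: compressing $\overline O$ into a fixed $n$-movable $O_0$ by some $g\in\Gamma$, the conjugate $g^{-1}ng\in N$ moves $O$ off itself. Since the action is micro-supported and $\Gamma_O$ is nonsolvable (Lemmas \ref{lem-ep-ms}, \ref{lem-mic-supp-not-solv}), each $[\Gamma_O,\Gamma_O]$ is nontrivial.

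Set $M = \langle [\Gamma_O,\Gamma_O] : \overline O \neq Z\rangle$. The commutator lemma shows $M$ lies in every nontrivial normal subgroup, while $M$ is itself nontrivial and normal; hence $M = \mon(\Gamma)$, which proves monolithicity and part of \ref{item-E-mon}. For \ref{item-comm-E}, let $D$ be generated by the commutators $[\gamma_1,\gamma_2]$ with $\fix(\gamma_1)\cap\fix(\gamma_2)$ of nonempty interior; the inclusions $M \subseteq D \subseteq [\Gamma^0,\Gamma^0]$ are immediate ($D$ is normal in $\Gamma$, and the elements of $\Gamma_O$ share the fixed open set $Z\setminus\overline O$). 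Everything then collapses once one proves the reverse inclusion $[\Gamma^0,\Gamma^0] \subseteq M$. As $\Gamma^0$ is generated by the $\Gamma_O$ with $\overline O\neq Z$ (each generator of $\Gamma^0$ fixes an open set, hence lies in such a $\Gamma_O$), and $[\Gamma_O,\Gamma_O]\subseteq M$, it suffices to show $[s,t]\in M$ for $s\in\Gamma_{O_1}$, $t\in\Gamma_{O_2}$. When $\overline{O_1}\cup\overline{O_2}\neq Z$ the two elements share the fixed open set $Z\setminus(\overline{O_1}\cup\overline{O_2})$ and we are done. The remaining \emph{covering} configuration $\overline{O_1}\cup\overline{O_2}=Z$ I would attack by using extreme proximality to conjugate $s$ to an $s'$ supported inside $V_t=Z\setminus\overline{O_2}$ (so $[s',t]=1$) and transferring along $c[s,t]c^{-1}=[s',ctc^{-1}]$, the point being to arrange that this corrected commutator again pairs two elements with a common fixed open region. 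This covering case is exactly where I expect the real difficulty: the natural compressing elements fix no open set, so matching supports to fixed regions is delicate, and it is the one step that genuinely exploits the richness of an extremely proximal action.

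Granting \ref{item-comm-E} and \ref{item-E-mon}, the remaining parts are formal. For \ref{item-N-centr-triv}, the centralizer $C=C_\Gamma(N)$ of a nontrivial normal $N$ is normal; if $1\neq c\in C$ moves a point $z_0$, choose a small $U\ni z_0$ with $c(U)\cap U=\emptyset$ and $\overline U\neq Z$, and pick $1\neq m\in[\Gamma_U,\Gamma_U]\subseteq N$, which is supported in $U$. Then $cmc^{-1}$ is supported in the disjoint set $c(U)$, so $cmc^{-1}=m$ forces $m=1$, a contradiction. For \ref{item-M-simple}, if $M$ acts extremely proximally then $(M,Z)$ satisfies (EP) and is non-topologically-free (it contains the small-support elements above), so \ref{item-comm-E}--\ref{item-E-mon} apply to $M$; its generators fix open sets, so $M^0=M$ and $\mon(M)=[M,M]$. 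But $[M,M]$ is nontrivial and normal in $\Gamma$, hence contains $\mon(\Gamma)=M$, so $M$ is perfect and $\mon(M)=M$, i.e.\ $M$ is simple.

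Finally \ref{item-Gamma-virt-s}. If $\Gamma^0$ has finite index and finite abelianization, then $M=[\Gamma^0,\Gamma^0]$ has finite index, so $\Gamma=FM$ for a finite $F$ and Lemma \ref{lem-EP-to-cocompact} makes the $M$-action extremely proximal; by \ref{item-M-simple} $M$ is simple, so $\Gamma$ is virtually simple. Conversely, a finite-index simple subgroup has normal core $S\trianglelefteq\Gamma$ of finite index with $S\neq 1$; then $M=\mon(\Gamma)\subseteq S$, and since $S$ is normal in the simple finite-index subgroup it equals it, so $S$ is simple and $M\trianglelefteq S$ nontrivial forces $M=S$. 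Thus $M$ is a finite-index simple subgroup, whence $\Gamma^0\supseteq M$ has finite index and $\Gamma^0/[\Gamma^0,\Gamma^0]=\Gamma^0/M$ is finite.
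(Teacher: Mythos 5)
Your overall architecture matches the paper's: the monolith is $[\Gamma^0,\Gamma^0]$, identified with the subgroup generated by commutators of elements sharing a fixed open set, and parts \ref{item-N-centr-triv}--\ref{item-Gamma-virt-s} then follow formally. Your disjoint-support argument for \ref{item-N-centr-triv} and the normal-core argument for the necessity direction of \ref{item-Gamma-virt-s} are legitimate variants of the paper's shorter arguments, and your \ref{item-M-simple} is essentially the paper's. The double-commutator lemma $[\Gamma_O,\Gamma_O]\subseteq N$ for every non-trivial normal $N$ is correct and is exactly the ingredient the paper imports from Nekrashevych.

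The genuine gap is the one you flag yourself: the inclusion $[\Gamma^0,\Gamma^0]\subseteq M$ in the covering configuration $\overline{O_1}\cup\overline{O_2}=Z$. Your sketch does not work as written: if $s'=csc^{-1}$ is supported in $Z\setminus\overline{O_2}$ then indeed $[s',t]=1$, but the conjugate of $[s,t]$ is $[s',ctc^{-1}]$, and $ctc^{-1}$ is supported in $c(O_2)$, which has no reason to avoid the support of $s'$; you cannot normalize $s$ without denormalizing $t$. The paper's resolution avoids chasing supports altogether. Let $N$ be the subgroup generated by the \emph{good} commutators (pairs with a common fixed open set). By Lemmas \ref{lem-ep-ms} and \ref{lem-mic-supp-not-solv}, $N$ is non-trivial; being normal and non-trivial, it acts \emph{minimally} on $Z$ by Lemma \ref{lem-normal-minimal}. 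Hence for $g$ fixing the open set $U$ and $h$ fixing $V$ one can choose $s\in N$ with $W=U\cap s(V)\neq\emptyset$; then $g$ and $h^s$ fix $W$ in common, so $[g,h^s]\in N$ by definition of $N$, and the identity $[g,h]=[h,s]^g\,[g,h^s]\,[s,h]$ exhibits $[g,h]$ as a product of elements of $N$, because the error terms involve $s\in N$ and $N$ is normal. The idea you were missing is that the moving element $s$ should be taken \emph{inside} $N$ itself, so that the conjugation error is absorbed by $N$ rather than eliminated. With that step supplied, the rest of your proof goes through.
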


\begin{proof}
\ref{item-comm-E} Denote by $N$ the subgroup generated by the set of $[\gamma_1,\gamma_2]$, where $\gamma_1,\gamma_2$ act trivially on a common open set. We show that for every $g,h$ fixing non-empty open sets $U,V$, the commutator $[g,h]$ belongs to $N$. Since $\Gamma^0$ is generated by all these elements $g,h$, this will show that $\Gamma^0 / N$ is abelian. Hence $[\Gamma^0,\Gamma^0] \leq N$, and the other inclusion is clear. 

First note that $N$ is not trivial by Lemmas \ref{lem-mic-supp-not-solv} and \ref{lem-ep-ms}. Therefore $N$ acts minimally on $Z$ according to Lemma \ref{lem-normal-minimal}, and we may find $s \in N$ such that the open set $W = U \cap s(V)$ is non-empty. Since $g$ and $h^s$ fix $W$ by construction, we have $[g,h^s] \in N$. But since $s \in N$, we deduce that $[g,h] = [h,s]^g [g,h^s][s,h]$ is a product of three elements of $N$, and hence belongs to $N$, as desired.

\ref{item-E-mon} We shall show that any non-trivial normal subgroup $N$ contains $[\Gamma^0,\Gamma^0]$. Since $[\Gamma^0,\Gamma^0]$ is itself a non-trivial normal subgroup, this will prove that it is the monolith of $\Gamma$. By a classical commutator manipulation (see e.g.\ Lemma 4.1 from \cite{Nek-fp}), there exists an open set $U$ such that $N$ contains the derived subgroup of $\Gamma_U$. Now let $\gamma_1, \gamma_2$ fixing an open set $V$. If $\gamma$ is such that $\gamma(Z \setminus V) \subset U$, then $\gamma_1^\gamma,\gamma_2^\gamma$ are supported inside $U$, so that $[\gamma_1^\gamma,\gamma_2^\gamma] = [\gamma_1, \gamma_2]^\gamma$ is contained in $N$. Since $N$ is normal, $[\gamma_1, \gamma_2] \in N$. Now all these elements generate $[\Gamma^0,\Gamma^0]$ by \ref{item-comm-E}, hence the conclusion. 

\ref{item-N-centr-triv} If $N$ is a normal subgroup of $\Gamma$, then so is $C_\Gamma(N)$. Therefore they cannot be both non-trivial, because otherwise the intersection would be abelian and would contain $[\Gamma^0,\Gamma^0]$ by the previous paragraph, a contradiction.

\ref{item-M-simple} If the action of $[\Gamma^0,\Gamma^0]$ on $Z$ is extremely proximal, then according to \ref{item-E-mon} the monolith $N$ of $[\Gamma^0,\Gamma^0]$ is non-trivial. Since $N$ is characteristic in $[\Gamma^0,\Gamma^0]$, $N$ is normal in $\Gamma$, and hence contains $[\Gamma^0,\Gamma^0]$ by \ref{item-E-mon}. So $N = [\Gamma^0,\Gamma^0]$ and $[\Gamma^0,\Gamma^0]$ is simple.

\ref{item-Gamma-virt-s} For $\Gamma$ to be virtually simple it is clearly necessary that the normal subgroup $[\Gamma^0,\Gamma^0]$ has finite index in $\Gamma$. Conversely, if this condition holds then the action of $[\Gamma^0,\Gamma^0]$ on $Z$ is extremely proximal (Lemma \ref{lem-EP-to-cocompact}), and $[\Gamma^0,\Gamma^0]$ is simple by \ref{item-M-simple}.
\end{proof}

\begin{defi}
Let $\X$ be a topological space, and let $\Gamma$ be a group acting on $\X$. A non-empty open set $\Omega \subset \X$ is \textbf{wandering} for $\Gamma$ if the translates $\gamma(\Omega)$, $\gamma \in \Gamma$, are pairwise disjoint. We say that $\Omega$ is wandering for $\gamma$ if it is wandering for $\left\langle \gamma\right\rangle$.
\end{defi}

\begin{prop} \label{prop-free-wander}
Let $\Gamma$ and $Z$ as in (EP). Then there exist an open set $\Omega$ and a non-abelian free subgroup $\mathbb{F}_2 = \Delta \leq \Gamma$ such that $\Omega$ is wandering for $\Delta$. 
\end{prop}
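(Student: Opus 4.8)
The plan is to produce two elements with transverse north--south dynamics and then run a ping--pong argument that yields freeness and the wandering set simultaneously.

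First I would record that $Z$ is infinite: if $Z$ were finite, then $\Gamma$ would act by permutations, and compressing the proper closed set $Z \setminus \{z\}$ into a single point would be impossible for a bijection once $|Z| \geq 3$. Being infinite and Hausdorff, $Z$ contains five distinct points, around which I can choose five pairwise disjoint non-empty open sets $A_+, A_-, B_+, B_-, \Omega$ with pairwise disjoint closures.

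The key construction is an element $a \in \Gamma$ with $a(Z \setminus A_-) \subset A_+$. The closed set $C = Z \setminus A_-$ is proper, hence compressible by extreme proximality, so some singleton $\{x\}$ lies in the closure of its $\Gamma$-orbit in $2^Z$. Since this orbit closure is closed and $\Gamma$-invariant, it contains the orbit closure of $\{x\}$, which equals all of $Z$ by minimality; thus it contains $\{y\}$ for some $y \in A_+$. As $\{D : D \subset A_+\}$ is Chabauty-open, there is $a \in \Gamma$ with $a(C) \subset A_+$. Taking complements gives $a^{-1}(Z \setminus A_+) \subset A_-$. Constructing $b$ symmetrically for the pair $(B_-, B_+)$, I obtain $a^{\pm 1}, b^{\pm 1}$ each sending the complement of its repelling set into its attracting set, where the four sets $A_+, A_-, B_+, B_-$ are pairwise disjoint.

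Finally I would run ping--pong using $\Omega$. For a formal reduced word $w = s_1 \cdots s_n$ in $a, b$, apply the letters to $\Omega$ from the right: since $\Omega$ avoids all four sets, $s_n(\Omega)$ lands in the attracting set of $s_n$; at each further step the current image $T$ is the attracting set of the previous letter, which---precisely because the word is reduced---is never the repelling set of the letter being applied, so by disjointness it is pushed into the next attracting set. Hence $w(\Omega) \subset A_+ \cup A_- \cup B_+ \cup B_-$, which is disjoint from $\Omega$. Therefore $w(\Omega) \cap \Omega = \emptyset$ for every non-trivial reduced $w$; this simultaneously shows that $\Delta := \langle a, b \rangle$ is free of rank two (a non-trivial reduced word acts non-trivially, so the map $\mathbb{F}_2 \to \Gamma$ is injective) and that $\Omega$ is wandering for $\Delta$ (since $\delta(\Omega) \cap \Omega = \emptyset$ for all $1 \neq \delta \in \Delta$, equivalently $\delta_1(\Omega) \cap \delta_2(\Omega) = \emptyset$ for $\delta_1 \neq \delta_2$). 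The main obstacle is the construction of $a$ and $b$ with exact control of the attracting set: extreme proximality only compresses $C$ to an uncontrolled point, and it is the interplay with minimality that lets me route the limit singleton into the prescribed open set $A_+$; the remaining difficulty is purely the reduced-word bookkeeping in the ping--pong step, which the disjointness of the four regions handles cleanly.
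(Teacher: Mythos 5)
Your proof is correct and follows essentially the same route as the paper's: four pairwise disjoint ping--pong regions for $a,b$ obtained from extreme proximality, a fifth open set $\Omega$ disjoint from all of them, and the reduced-word induction showing $w(\Omega)$ lands in the union of the four regions, giving freeness and wandering simultaneously. The only difference is cosmetic: the paper cites Glasner for the existence of $a,b$ and for freeness, whereas you construct $a,b$ directly from compressibility together with minimality and the Chabauty topology, which is a correct and self-contained rendering of the same argument.
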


\begin{proof}
Following Glasner \cite{Glas-top-dyn-group}, we consider pairwise disjoint non-empty open sets $U_-,U_+,V_-,V_+$, and elements $a,b \in \Gamma$ such that $a(Z \setminus U_-) \subset U_+$ and $b(Z \setminus V_-) \subset V_+$. Let $W = U_- \cup U_+ \cup V_- \cup V_+$. It follows from a ping-pong argument than any non-trivial reduced word in the letters $a,b$ sends the complement of $W$ inside $W$, so that the subgroup $\Delta$ generated by $a,b$ is free \cite[Th.\ 3.4]{Glas-top-dyn-group}.

Upon reducing $W$ if necessary, we may find an open set $\Omega$ such that $\Omega \cap W = \emptyset$ and $a(\Omega) \subset U_{+}$, $a^{-1}(\Omega) \subset U_{-}$, $b(\Omega) \subset V_{+}$ and $b^{-1}(\Omega) \subset V_{-}$. Induction on the word length shows that if the left-most letter of $\gamma \in \Delta$ is respectively $a,a^{-1},b,b^{-1}$, then $\gamma(\Omega)$ lies respectively inside $U_+,U_-,V_+,V_-$. In particular $\Omega \cap \gamma(\Omega)$ is empty since $\Omega$ is disjoint from $W$, so $\Omega$ is wandering for $\Delta$.
\end{proof}

\begin{prop} \label{prop-embeds-wreath}
Retain notations (EP). Then the wreath product $\Gamma_U \wr \mathbb{F}_2$ embeds into $\Gamma$ for every open subset $U \subsetneq Z$ that is not dense.
\end{prop}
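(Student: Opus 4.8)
The plan is to build an explicit copy of $\Gamma_U \wr \mathbb{F}_2$ inside $\Gamma$, using the wandering domain produced by Proposition \ref{prop-free-wander}. First I would apply that proposition to obtain a non-abelian free subgroup $\Delta = \mathbb{F}_2 \leq \Gamma$ together with a non-empty open set $\Omega$ that is wandering for $\Delta$, so that the translates $\delta(\Omega)$, $\delta \in \Delta$, are pairwise disjoint. Since $U \subsetneq Z$ is not dense, $\overline{U}$ is a proper closed subset of $Z$, hence compressible by extreme proximality; combining compressibility with minimality of the $\Gamma$-action, I would find $t \in \Gamma$ with $t(\overline{U}) \subseteq \Omega$, and in particular $t(U) \subseteq \Omega$. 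Concretely: compress $\overline{U}$ into a small neighbourhood of some point, then translate that point into $\Omega$ using minimality, and invoke continuity of the action.

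With this data I would define a homomorphism $\Phi : \Gamma_U \wr \mathbb{F}_2 \to \Gamma$ as follows. Identify the acting copy of $\mathbb{F}_2$ with $\Delta$ via the identity map, and send the $\delta$-th copy of the base group to $\delta\, t\, \Gamma_U\, t^{-1}\, \delta^{-1} = \Gamma_{\delta t(U)}$, using the isomorphism $\Gamma_U \cong \Gamma_{\delta t(U)}$ induced by conjugation by $\delta t$. Each $\Gamma_{\delta t(U)}$ acts trivially outside $\delta t(U) \subseteq \delta(\Omega)$, so distinct base copies have disjoint supports and therefore commute; this shows that the base $\bigoplus_{\Delta}\Gamma_U$ maps in, and the map is injective on the base because $\Gamma$ acts faithfully on $Z$ (an element of the base maps to $1$ only if each coordinate, being supported in a distinct $\delta(\Omega)$, acts trivially, hence is trivial). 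Finally, conjugation by $\delta_0 \in \Delta$ carries $\Gamma_{\delta t(U)}$ to $\Gamma_{\delta_0\delta\, t(U)}$, which is precisely the regular action of $\mathbb{F}_2$ permuting the coordinates; thus $\Phi$ is a well-defined homomorphism.

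The main point, and the step I expect to require the most care, is the injectivity of $\Phi$ on the whole group, i.e.\ ruling out any unexpected collapsing. Here I would exploit the action on the wandering family $\{\delta(\Omega)\}_{\delta \in \Delta}$. Writing a general element as $b \cdot w$ with $b$ in the base and $w \in \Delta$, the element $\Phi(b)$ stabilises each set $\delta(\Omega)$ setwise: for $\delta$ in the (finite) support of $b$ the corresponding factor maps $\delta t(U)$ into $\delta(\Omega)$ and fixes $\delta(\Omega) \setminus \delta t(U)$, while for the remaining $\delta$ the element $\Phi(b)$ is the identity on $\delta(\Omega)$. On the other hand $w$ sends $\delta(\Omega)$ to $(w\delta)(\Omega)$. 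Hence $\Phi(b\cdot w)$ maps $\delta(\Omega)$ onto $(w\delta)(\Omega)$ for every $\delta$. If $\Phi(b\cdot w) = 1$, then $(w\delta)(\Omega) = \delta(\Omega)$ for all $\delta$; since $\Omega$ is non-empty and the family is wandering, the translates are pairwise distinct, forcing $w\delta = \delta$ and thus $w = 1$. Then $\Phi(b) = 1$, and injectivity on the base gives $b = 1$. This yields $\Gamma_U \wr \mathbb{F}_2 \cong \Phi(\Gamma_U \wr \mathbb{F}_2) \leq \Gamma$, as desired.
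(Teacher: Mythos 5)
Your proof is correct and follows essentially the same route as the paper: the paper takes $\Omega$ and $\Delta$ from Proposition \ref{prop-free-wander}, observes that the $\Delta$-conjugates of $\Gamma_\Omega$ have pairwise disjoint supports and hence generate $\Gamma_\Omega \wr \mathbb{F}_2$ together with $\Delta$, and then embeds $\Gamma_U$ into $\Gamma_\Omega$ by compressing $\overline{U}$ into $\Omega$. You simply carry out the same construction more explicitly, spelling out the injectivity verification that the paper leaves implicit.
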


\begin{proof}
Let $\Omega$ and $\Delta$ as in Proposition \ref{prop-free-wander}, and let $\Lambda$ be the subgroup of $\Gamma$ generated by $\Gamma_\Omega$ and $\Delta$. Since $\Omega$ is wandering for $\Delta$, all the conjugates $\gamma \Gamma_\Omega \gamma^{-1}$ pairwise commute, and it follows that $\Lambda$ is isomorphic to $\Gamma_\Omega \wr \mathbb{F}_2$. Now if $U$ is an in the statement, by extreme proximality the group $\Gamma_U$ is isomorphic to a subgroup of $\Gamma_\Omega$, hence the conclusion.
\end{proof}

The argument in the following proof is borrowed from \cite{Capr-proceed}.

\begin{prop} \label{prop-lin-rep-core}
In the setting (EP), if the action of $\Gamma$ on $Z$ is not topologically free, then $\Gamma$ cannot have any faithful linear representation.
\end{prop}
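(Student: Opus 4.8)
The plan is to derive a contradiction from two ingredients: the abundance of pairwise-commuting non-abelian subgroups forced by extreme proximality, and the rigidity of commutants in a matrix algebra. First I would record that, since the action is not topologically free, Lemma~\ref{lem-ep-ms} makes it micro-supported; consequently, fixing any non-empty open set $U \subsetneq Z$ with $\overline{U} \neq Z$ (which exists because $Z$ is Hausdorff with at least three points), Lemma~\ref{lem-mic-supp-not-solv} shows that $\Gamma_U$ is not solvable, and in particular is non-abelian. The role of extreme proximality is then that, by Proposition~\ref{prop-embeds-wreath}, the wreath product $\Gamma_U \wr \mathbb{F}_2$ embeds in $\Gamma$; reading off the base of this wreath product yields an infinite family $(A_g)_{g \in \mathbb{F}_2}$ of pairwise-commuting subgroups of $\Gamma$, each isomorphic to the non-abelian group $\Gamma_U$.

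Granting this, suppose toward a contradiction that $\Gamma$ admits a faithful representation $\Gamma \hookrightarrow \mathrm{GL}_N(k)$ over a field $k$, and relabel an infinite subfamily of the $A_g$ as $A_1, A_2, \ldots$; by faithfulness these are pairwise-commuting non-abelian subgroups of $\mathrm{GL}_N(k) \subset M_N(k)$. For each $n$ I would consider the commutant
\[ V_n = \left\{ m \in M_N(k) \, : \, ma = am \text{ for all } a \in A_1 \cup \cdots \cup A_n \right\}, \]
a $k$-linear subspace of $M_N(k)$, giving a descending chain $V_1 \supseteq V_2 \supseteq \cdots$. Since $A_{n+1}$ commutes with $A_1, \ldots, A_n$ one has $A_{n+1} \subseteq V_n$; and if $V_n = V_{n+1}$, then $A_{n+1} \subseteq V_{n+1}$, i.e.\ every element of $A_{n+1}$ centralizes $A_{n+1}$, forcing $A_{n+1}$ to be abelian --- a contradiction. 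Hence the chain is strictly decreasing, which is impossible inside the finite-dimensional space $M_N(k)$ of dimension $N^2$. This contradiction shows that $\Gamma$ has no faithful linear representation.

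The conceptual core, and the step I expect to carry the weight, is the commutant argument isolating the statement that a linear group over \emph{any} field cannot contain infinitely many pairwise-commuting non-abelian subgroups; once this is in hand the geometric input is entirely packaged in Proposition~\ref{prop-embeds-wreath} (itself built on the ping-pong construction of Proposition~\ref{prop-free-wander}). The only minor point to check is the existence of a suitable non-dense open $U$, which is immediate from $Z$ being Hausdorff with at least three points. I would stress that the argument is insensitive to the characteristic of $k$ and to the dimension $N$, and that it uses non-solvability of $\Gamma_U$ only through the much weaker conclusion that $\Gamma_U$ is non-abelian.
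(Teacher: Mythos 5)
Your proof is correct, and it shares the paper's geometric reduction while replacing its algebraic endgame with a different (and more self-contained) argument. Both proofs start from the same place: non-topological-freeness makes the action micro-supported (Lemma~\ref{lem-ep-ms}), Lemma~\ref{lem-mic-supp-not-solv} gives a non-abelian $\Gamma_U$, and Proposition~\ref{prop-embeds-wreath} embeds $\Gamma_U \wr \mathbb{F}_2$ into $\Gamma$. The paper then passes to a finitely generated non-abelian subgroup $B \leq \Gamma_U$ so that $B \wr \mathbb{F}_2$ is finitely generated, invokes Gruenberg's theorem that such a wreath product (non-abelian base, infinite top) is not residually finite, and concludes via Malcev's theorem that finitely generated linear groups are residually finite. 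You instead extract from the base of the wreath product infinitely many pairwise-commuting non-abelian subgroups and run the commutant dimension count in $M_N(k)$: the chain $V_1 \supseteq V_2 \supseteq \cdots$ must be strictly decreasing (else some $A_{n+1}$ would centralize itself and be abelian), which is impossible in dimension $N^2$. Your version buys independence from the residual-finiteness machinery and from the need to choose $B$ finitely generated, works uniformly in any characteristic and dimension, and uses only that $\Gamma_U$ is non-abelian rather than non-solvable; the paper's version is shorter on the page because it outsources the work to two classical citations. The one hypothesis you rightly flag --- the existence of a non-empty, non-dense open $U$ --- is indeed immediate since $Z$ is Hausdorff with at least three points.
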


\begin{proof}
Let $U$ be an open subset of $Z$. By Lemmas \ref{lem-mic-supp-not-solv} and \ref{lem-ep-ms}, we may find a non-abelian finitely generated subgroup $B$ inside $\Gamma_U$. Now if we choose $U$ small enough, it follows from Proposition \ref{prop-embeds-wreath} that the finitely generated group $\Lambda = B \wr \mathbb{F}_2$ is isomorphic to a subgroup of $\Gamma$. Since $\Lambda$ is not residually finite \cite{Gruen57}, it admits no faithful linear representation by Malcev's theorem \cite{Malcev-lin-RF}, and a fortiori the same is true for $\Gamma$.
\end{proof}

Recall that a subgroup $\Lambda$ of a group $\Gamma$ is \textbf{commensurated} if all conjugates of $\Lambda$ are commensurable, where two subgroups are commensurable if the intersection has finite index in both.

The beginning of the argument in the proof of the following proposition already appeared in \cite{LB-Wes}. The idea is to extend classical techniques for normal subgroups to certain commensurated subgroups.

\begin{prop} \label{prop-commens-wander}
Let $\Gamma$ and $Z$ as in (EP), and assume that the action of $\Gamma$ on $Z$ is not topologically free. If $\Lambda$ is a commensurated subgroup of $\Gamma$ such that there exists an element of $\Lambda$ admitting a wandering open set, then $\Lambda$ contains the monolith $\mon(\Gamma)$.
\end{prop}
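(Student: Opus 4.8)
The plan is to produce, inside $\Lambda$, a non-trivial \emph{normal} subgroup of $\Gamma$; since $\Gamma$ is monolithic with $\mon(\Gamma)=[\Gamma^0,\Gamma^0]$ contained in every non-trivial normal subgroup (Proposition \ref{prop-core-deriv0}), any such subgroup will automatically contain $\mon(\Gamma)$. The entire difficulty stems from the fact that $\Lambda$ is only \emph{commensurated}: conjugating an element of $\Lambda$ by an arbitrary $\gamma\in\Gamma$ lands in $\Lambda$ only up to finite index. The guiding principle is therefore to manufacture elements of $\Lambda$ using exclusively conjugations by \textbf{powers of elements already lying in $\Lambda$}, so that no finite-index ambiguity is ever incurred.

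The core step I would isolate as a sublemma: \emph{if $h\in\Lambda$ admits a wandering open set $W$, then $[\Gamma_W,\Gamma_W]\le\Lambda$}. Note $\langle h\rangle\le\Lambda$. Fix $s,t\in\Gamma_W$. Since $s^{-1}\Lambda s\cap\Lambda$ and $t^{-1}\Lambda t\cap\Lambda$ have finite index in $\Lambda$, their intersections with $\langle h\rangle$ have finite index in $\langle h\rangle$, so there is a common $N\neq 0$ with $sh^Ns^{-1}, th^Nt^{-1}\in\Lambda$; hence $\sigma:=[s,h^N]$ and $\tau:=[t,h^N]$ lie in $\Lambda$. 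As $W$ is wandering, the tiles $W, h^NW, h^{2N}W$ are pairwise disjoint, so $\sigma$ is supported on $W\sqcup h^NW$, acting as $s$ on $W$ and as $h^Ns^{-1}h^{-N}$ on $h^NW$ (and similarly for $\tau$). A direct support computation then shows that the commutator $[\sigma,\,h^N\tau h^{-N}]$ is supported on the \emph{single} tile $h^NW$ and equals $h^N[s^{-1},t]h^{-N}$: on $W$ and on $h^{2N}W$ only one of the two factors is active, so the commutator is trivial there, while on $h^NW$ it reduces to $[h^Ns^{-1}h^{-N},\,h^Nth^{-N}]$. Since $h^N\in\Lambda$, this commutator lies in $\Lambda$, and conjugating back by $h^{-N}\in\Lambda$ gives $[s^{-1},t]\in\Lambda$; letting $s,t$ range over $\Gamma_W$ yields $[\Gamma_W,\Gamma_W]\le\Lambda$. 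I expect this support bookkeeping to be the main obstacle: the commutator must be arranged so that the two outer pieces vanish and only the controlled middle piece survives, which is exactly what converts a ``finite-index'' statement into a genuine membership in $\Lambda$.

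Finally I would upgrade the sublemma to all translates of the given wandering set. Let $g\in\Lambda$ be the hypothesised element, with wandering open set $\Omega$. For any $\gamma\in\Gamma$, the element $\gamma g\gamma^{-1}$ lies in $\gamma\Lambda\gamma^{-1}$, which is commensurable with $\Lambda$, so some power $\gamma g^k\gamma^{-1}$ ($k\neq 0$) lies in $\Lambda$ and has $\gamma\Omega$ as a wandering open set. The sublemma then gives $[\Gamma_{\gamma\Omega},\Gamma_{\gamma\Omega}]\le\Lambda$ for every $\gamma\in\Gamma$. I set $N=\langle\,[\Gamma_{\gamma\Omega},\Gamma_{\gamma\Omega}]:\gamma\in\Gamma\,\rangle\le\Lambda$. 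Conjugation permutes the generators, $\delta[\Gamma_{\gamma\Omega},\Gamma_{\gamma\Omega}]\delta^{-1}=[\Gamma_{\delta\gamma\Omega},\Gamma_{\delta\gamma\Omega}]$, so $N$ is normal in $\Gamma$; and $N\neq 1$ because the action, being non-topologically-free, is micro-supported (Lemma \ref{lem-ep-ms}), whence $\Gamma_\Omega$ is non-solvable (Lemma \ref{lem-mic-supp-not-solv}) and $[\Gamma_\Omega,\Gamma_\Omega]\neq 1$. As a non-trivial normal subgroup $N$ contains the monolith, we conclude $\mon(\Gamma)\le N\le\Lambda$, as desired.
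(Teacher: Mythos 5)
Your proof is correct and follows essentially the same strategy as the paper's: use the commensuration hypothesis only through powers of the wandering element (so membership in $\Lambda$ is never lost), run a double-commutator computation on disjoint tiles of the wandering set to get $[\Gamma_\Omega,\Gamma_\Omega]\le\Lambda$, transport this to translates of $\Omega$ via conjugated powers, and conclude with Proposition \ref{prop-core-deriv0}. The only cosmetic differences are that the paper uses the slightly slicker two-tile identity $[[g,\lambda^n],h]=[g,h]$ in place of your three-tile computation, and finishes by compressing an arbitrary proper closed set into $\Omega$ and quoting the explicit generating set of $\mon(\Gamma)$, whereas you assemble the subgroups $[\Gamma_{\gamma\Omega},\Gamma_{\gamma\Omega}]$ into a non-trivial normal subgroup of $\Gamma$ inside $\Lambda$ and quote monolithicity.
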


\begin{proof}
Let $\lambda \in \Lambda$ admitting a wandering open set $\Omega$. We shall first prove that $[\Gamma_{\Omega},\Gamma_{\Omega}]$ is contained in $\Lambda$. Let $g,h \in \Gamma_{\Omega}$, and let also $n \geq 1$. Since $\Omega$ is wandering we have $\Omega \cap \lambda^n (\Omega) = \emptyset$. It follows that the commutator $[g,\lambda^n]$ is trivial outside $\Omega \cup \lambda^n (\Omega)$, and coincides with $g$ on $\Omega$ and with $\lambda^n g^{-1} \lambda^{-n}$ on $\lambda^n (\Omega)$. Therefore its commutator with $h$ is trivial outside $\Omega$, and is coincides with $[g,h]$ on $\Omega$. But since $g,h \in \Gamma_{\Omega}$, the elements $[[g,\lambda^n],h]$ and $[g,h]$ actually coincide everywhere, i.e.\ $[[g,\lambda^n],h] = [g,h]$.

Now since $\Lambda$ is commensurated in $\Gamma$, there exists $n_0 \geq 1$ such that $[[g,\lambda^{n_0}],h]$ belongs to $\Lambda$. Applying the previous argument with $n = n_0$, we deduce that $[g,h]$ belongs to $\Lambda$. 

In order to prove the statement, it is enough to prove that $[\Gamma_{C},\Gamma_{C}]$ is contained in $\Lambda$ for every closed subset $C \subsetneq Z$ according to Proposition \ref{prop-core-deriv0}. So let $C$ a proper closed subset of $Z$. By minimality and extreme proximality, there is $\gamma \in \Gamma$ such that $\gamma(C) \subset \Omega$. Fix such a $\gamma$, and choose some integer $n_1 \geq 1$ such that $\gamma^{-1} \lambda^{n_1} \gamma$ belongs to $\Lambda$. Set $\lambda' = \gamma^{-1} \lambda^{n_1} \gamma$ and $\Omega' = \gamma^{-1}(\Omega)$. This $\Omega'$ is wandering for $\lambda'$ and $\lambda' \in \Lambda$, so $[\Gamma_{\Omega'},\Gamma_{\Omega'}]$ is contained in $\Lambda$ by the first paragraph. Since $C \subset \Omega'$, we have $\Gamma_{C} \leq \Gamma_{\Omega'}$, and the proof is complete.
\end{proof}

\begin{prop} \label{prop-commens-dicho}
Let $\Gamma$ and $Z$ as in (EP), and assume that the action of $\Gamma$ on $Z$ is not topologically free. Then there exists a free subgroup $\mathbb{F}_2 \leq \Gamma$ such that for every commensurated subgroup $\Lambda \leq \Gamma$ not containing the monolith $\mon(\Gamma)$, we have $\mathbb{F}_2 \cap \Lambda = 1$.
\end{prop}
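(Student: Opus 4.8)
The plan is to combine Propositions \ref{prop-free-wander} and \ref{prop-commens-wander} directly, so that the free subgroup we seek is exactly the one already produced by the first of these. Concretely, I would fix an open set $\Omega$ and a free subgroup $\mathbb{F}_2 = \Delta \leq \Gamma$ such that $\Omega$ is wandering for $\Delta$, as provided by Proposition \ref{prop-free-wander}. The claim is that this single $\Delta$ works uniformly for all the commensurated subgroups in the statement.

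Next I would argue by contradiction. Suppose $\Lambda \leq \Gamma$ is commensurated, does not contain $\mon(\Gamma)$, and yet $\Delta \cap \Lambda \neq 1$. Pick a non-trivial element $\lambda \in \Delta \cap \Lambda$. The key observation is that $\Omega$ is then wandering for $\lambda$: since $\Delta$ is free it is torsion-free, so $\lambda$ has infinite order and the powers $\lambda^n$ are pairwise distinct elements of $\Delta$; as $\Omega$ is wandering for $\Delta$, the translates $\lambda^n(\Omega)$ are therefore pairwise disjoint, which is precisely the statement that $\Omega$ is wandering for $\lambda$.

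At this point $\Lambda$ is a commensurated subgroup containing an element $\lambda$ that admits a wandering open set, so Proposition \ref{prop-commens-wander} applies and forces $\mon(\Gamma) \leq \Lambda$. This contradicts the hypothesis that $\Lambda$ does not contain the monolith, and hence $\Delta \cap \Lambda = 1$, as desired.

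I do not expect a genuine obstacle here: the substantive work has already been carried out in Proposition \ref{prop-commens-wander}, whose proof extends the commutator techniques for normal subgroups to commensurated subgroups via a wandering set, and in Proposition \ref{prop-free-wander}, which supplies a free group with a common wandering set by a ping-pong argument. The only point requiring a moment's care is the passage from \enquote{wandering for $\Delta$} to \enquote{wandering for $\lambda$}, which relies on the torsion-freeness of $\Delta$; once this is noted, the statement follows immediately by invoking the two earlier propositions.
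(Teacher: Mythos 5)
Your proposal is correct and follows exactly the paper's proof: take the free subgroup with a common wandering open set from Proposition \ref{prop-free-wander}, observe that any non-trivial element of it admits a wandering open set, and apply Proposition \ref{prop-commens-wander}. The extra remark about torsion-freeness just makes explicit what the paper leaves implicit.
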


\begin{proof}
Let $\mathbb{F}_2$ be a free subgroup of $\Gamma$ as in the conclusion of Proposition \ref{prop-free-wander}. If $\Lambda \leq \Gamma$ is a commensurated subgroup such that $\mathbb{F}_2 \cap \Lambda \neq 1$, then in particular $\Lambda$ contains an element admitting a wandering open set. So by Proposition \ref{prop-commens-wander} we have $\mon(\Gamma) \leq \Lambda$. This shows that every commensurated subgroup not containing $\mon(\Gamma)$ intersects $\mathbb{F}_2$ trivially. 
\end{proof}

\begin{cor} \label{cor-epntf-embed-amen}
Let $\Gamma$ and $Z$ as in (EP), and assume that the action of $\Gamma$ on $Z$ is not topologically free. If $G$ is a locally compact amenable group whose connected component $G^0$ is a Lie group, then there exists no injective homomorphism $\Gamma \rightarrow G$.
\end{cor}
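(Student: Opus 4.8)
The plan is to assume an injective homomorphism $\phi\colon\Gamma\to G$ and derive a contradiction, combining the non-linearity of $\Gamma$ (Proposition \ref{prop-lin-rep-core}) with the commensurated-subgroup dichotomy (Proposition \ref{prop-commens-dicho}). First I would record three facts that follow from the earlier results. Since the action is not topologically free it is micro-supported (Lemma \ref{lem-ep-ms}), so by Lemmas \ref{lem-mic-supp-not-solv} and Proposition \ref{prop-core-deriv0}\ref{item-E-mon} the monolith $M:=\mon(\Gamma)=[\Gamma^0,\Gamma^0]$ is non-trivial, with $C_\Gamma(M)=1$ and hence $Z(M)=1$ (Proposition \ref{prop-core-deriv0}\ref{item-N-centr-triv}). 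Moreover $M$ is \emph{non-amenable}: by Lemma \ref{lem-normal-minimal} the normal subgroup $M$ fixes no probability measure on $Z$, whereas an amenable group would fix one. Finally, Proposition \ref{prop-free-wander} (as used in Proposition \ref{prop-lin-rep-core}) gives a free subgroup $\mathbb{F}_2\leq\Gamma$ and even a finitely generated non-residually-finite subgroup $B\wr\mathbb{F}_2\leq\Gamma$, so $\Gamma$ is neither amenable nor residually finite.

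The first main step is to reduce to the case where $G$ is totally disconnected. The conjugation action of $\phi(\Gamma)$ on the Lie algebra $\mathfrak{g}=\mathrm{Lie}(G^0)$ yields a linear representation $\rho\colon\Gamma\to\mathrm{GL}(\mathfrak{g})$, whose kernel consists of the $\gamma$ with $\phi(\gamma)\in C_G(G^0)$ (an element acting trivially on $\mathfrak{g}$ centralizes the connected group $G^0$). If $\rho$ were faithful, $\Gamma$ would be linear, contradicting Proposition \ref{prop-lin-rep-core}. Otherwise $\ker\rho$ is a non-trivial normal subgroup, hence contains $M$, so $\phi(M)$ centralizes $G^0$. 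Now $\phi^{-1}(G^0)$ is normal in $\Gamma$; were it non-trivial it would contain $M$, forcing $\phi(M)\leq G^0\cap C_G(G^0)=Z(G^0)$, an abelian group, against non-amenability of $M$. Thus $\phi^{-1}(G^0)=1$ and $\phi$ induces an injection of $\Gamma$ into the totally disconnected amenable group $Q:=G/G^0$.

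Next I would exploit the totally disconnected structure. Choose a compact open subgroup $W\leq Q$ (van Dantzig); it is commensurated, so $\Lambda:=\phi^{-1}(W)$ is a commensurated subgroup of $\Gamma$ (preimages of commensurated subgroups are commensurated). By Proposition \ref{prop-commens-dicho}, either $M\not\leq\Lambda$, in which case $\mathbb{F}_2\cap\Lambda=1$, i.e.\ $\phi(\mathbb{F}_2)\cap W=1$; since $W$ is open this makes $\phi(\mathbb{F}_2)$ discrete, hence amenable, contradicting that $\mathbb{F}_2$ is not amenable. Or $M\leq\Lambda$ for every such $W$, which means $\phi(M)\leq\bigcap_W W=:Q_1$, a compact (profinite) normal subgroup of $Q$. (If moreover $\phi(\Gamma)\leq Q_1$, then $\Gamma$ is relatively compact, hence residually finite, contradicting the third fact above; so only $M$, not all of $\Gamma$, need be trapped.)

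The remaining case, $\phi(M)\leq Q_1$ with $Q_1$ profinite, forces $M$ to be residually finite, and the final step — the main obstacle — is to show that $M=\mon(\Gamma)$ is \emph{not} residually finite. This is exactly where extreme proximality must be used in an essential way, beyond mere micro-support (there exist residually finite micro-supported groups, such as certain branch groups, whose boundary action is isometric and hence not extremely proximal). I would produce inside $M$ a finitely generated non-residually-finite subgroup of wreath type: using extreme proximality together with the non-amenability of the minimal $M$-action, construct an infinite-order element $t\in M$ admitting a wandering open set $\Omega$, and set $B':=[\Gamma_\Omega,\Gamma_\Omega]\leq M$, which is non-abelian by Lemmas \ref{lem-ep-ms} and \ref{lem-mic-supp-not-solv}. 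Since $\Omega$ is wandering for $t$, the $\langle t\rangle$-conjugates of $B'$ have pairwise disjoint supports, so $\langle B',t\rangle\cong B'\wr\mathbb{Z}$ embeds in $M$ and is not residually finite, the desired contradiction. The delicate point is the construction of the wandering element $t$ \emph{within} $M$: every element of the envelope $\Gamma^0\supseteq M$ is "elliptic" (a product of elements with open fixed-point sets), whereas the hyperbolic elements furnishing wandering domains in Proposition \ref{prop-free-wander} lie outside $\Gamma^0$; so one must manufacture a hyperbolic element of $M$ as a suitable product of elliptic elements, which is the technical heart of the argument.
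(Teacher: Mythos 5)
Your overall strategy is the right one and, up to reordering, is essentially the paper's: combine the commensurated-subgroup dichotomy (Proposition \ref{prop-commens-dicho}) with non-linearity (Proposition \ref{prop-lin-rep-core}) and the non-abelianness of the monolith (Proposition \ref{prop-core-deriv0}). Your first reduction --- showing $\phi(M)$ centralizes $G^0$, hence $\phi^{-1}(G^0)=1$, hence $\Gamma$ injects into the totally disconnected group $Q=G/G^0$ --- is correct; the paper performs the same linear-representation step, only at the end rather than the beginning. The dichotomy step for a compact open subgroup $W\leq Q$ is also correct: either some $W$ misses $M$, in which case $\phi(\mathbb{F}_2)$ is a discrete non-amenable subgroup of the amenable group $Q$, or $\phi(M)\leq W$ for every compact open $W$.

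The problem is your last paragraph, which rests on a false premise and is in any case not a proof. You assert that $Q_1:=\bigcap_W W$, the intersection over all compact open subgroups of $Q$, is \enquote{a compact (profinite) normal subgroup}. In fact $Q_1$ is trivial: by van Dantzig's theorem the compact open subgroups form a neighbourhood basis of the identity in the totally disconnected locally compact group $Q$, and since $Q$ is Hausdorff the intersection of all identity neighbourhoods is $\left\{1\right\}$. So the moment you have $\phi(M)\leq W$ for every compact open $W$, you get $\phi(M)=1$, contradicting injectivity of $\phi$ (as $M\neq 1$), and the proof is finished. This is exactly how the paper concludes, except that it intersects the open subgroups $U\supseteq G^0$ of $G$ to place $\mon(\Gamma)$ inside $G^0$ and only then runs the linearity/centralizer argument to make $\mon(\Gamma)$ abelian. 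Consequently your entire final step --- proving that $M$ is not residually finite by manufacturing a hyperbolic element of $M$ with a wandering domain --- is unnecessary; and as written it is also the one step you do not actually carry out: you flag the construction of such an element inside $M$ as \enquote{the technical heart} without supplying it, so even on its own terms the argument is incomplete. Replace that paragraph by the observation that $Q_1=\left\{1\right\}$ and the proof is correct.
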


\begin{proof}
Argue by contradiction and assume $\Gamma$ embeds in $G$. Let $U$ be an open subgroup of $G$ containing $G^0$ as a cocompact subgroup (the existence of $U$ follows from van Dantzig's theorem \cite[Th.\ 7.7]{HR} applied to the totally disconnected group $G / G^0$). Such a $U$ is commensurated in $G$, so the subgroup $\Gamma \cap U$ is commensurated in $\Gamma$. If there exists $U$ such that $\Gamma \cap U$ does not contain $\mon(\Gamma)$, then according to Proposition \ref{prop-commens-dicho} we may find a non-abelian free subgroup $\mathbb{F}_2 \leq \Gamma$ such that $\mathbb{F}_2 \cap U = 1$. In particular $G$ contains the non-amenable group $\mathbb{F}_2$ as a discrete subgroup, which contradicts amenability of $G$. Therefore $\mon(\Gamma)$ is contained in $U$ for every choice of $U$. Since compact open subgroups form a basis at $1$ in $G / G^0$ by van Dantzig's theorem, it follows that $\mon(\Gamma)$ actually lies inside $G^0$. Now since $G^0$ is a connected Lie group, the group $\mathrm{Aut}(G^0)$ is linear, so the map $G \rightarrow \mathrm{Aut}(G^0)$ induced by the conjugation action of $G$ on $G^0$ is not injective in restriction to $\Gamma$ by Proposition \ref{prop-lin-rep-core}. Therefore this map must vanish on $\mon(\Gamma)$, which means that $\mon(\Gamma)$ actually lies inside the center of $G^0$. In particular $\mon(\Gamma)$ is abelian, which contradicts Proposition \ref{prop-core-deriv0}.
\end{proof}

\section{The proofs of Theorems \ref{thm-intro-URS-lattice-prod} and \ref{thmintro-normal-coamenable-lc}} \label{sec-lattices-urs}

\subsection{Boundary-minimal subgroups} \label{subsec-normal-bound}

In this paragraph we consider the following property:

\begin{defi} \label{def-bnd-min}
Let $G$ be a topological group, and $L$ a closed subgroup of $G$. We say that $L$ is \textbf{boundary-minimal} if there exists a non-trivial $G$-boundary on which $L$ acts minimally.
\end{defi}

It should be noted that being boundary-minimal does not prevent $L$ from being amenable. For instance the action of Thompson's group $T$ on the circle $\mathbb{S}^1$ is a boundary action, and the abelian subgroup of $T$ consisting of rotations acts minimally on $\mathbb{S}^1$. Other examples may be found among the groups acting on trees considered in \S \ref{subsec-background-g(f,f')}, where the stabilizer of a vertex is an amenable subgroup acting minimally on the ends of the tree.

In the sequel we will mainly focus on the case when $L$ is normal in $G$, or more generally when $L$ belongs to a URS (see Proposition \ref{prop-no-disj-wcoam}). By contrast with the previous examples, a normal boundary-minimal subgroup is never amenable, as a normal amenable subgroup of $G$ acts trivially on any $G$-boundary. Recall that Furman showed \cite[Prop.\ 7]{Furman-min-strg} (see also Caprace--Monod \cite[Prop.\ 3]{rel-amen}) that if $N$ is non-amenable normal subgroup of a locally compact group $G$,  there always exists a $G$-boundary on which $N$ acts non-trivially. It turns out that we actually have the following:

\begin{thm} \label{thm-normal-nonam-Gbnd}
Let $G$ be a locally compact group, and $N$ a closed normal non-amenable subgroup. Then there exists a non-trivial $G$-boundary on which the $N$-action is minimal and strongly proximal.
\end{thm}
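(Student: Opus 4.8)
The plan is to produce the required $G$-boundary as the Furstenberg boundary $\partial_{sp} N$ of $N$ itself, equipped with an appropriate $G$-action. Since $N$ is non-amenable, $\partial_{sp} N$ is a non-trivial $N$-boundary by Theorem \ref{thm-amean-bnd}. The main structural observation is that minimality and strong proximality are inherited \emph{upwards}: if a continuous $G$-action on a compact space $B$ extends a given $N$-action, then every $N$-orbit sits inside a $G$-orbit, so $G$-minimality follows from $N$-minimality; and for every $\mu \in \prob(B)$ one has $\overline{N\mu} \subseteq \overline{G\mu}$, so if $\overline{N\mu}$ contains a Dirac mass then so does $\overline{G\mu}$, i.e. $G$-strong-proximality follows from $N$-strong-proximality. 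Consequently it suffices to endow $B = \partial_{sp} N$ with a continuous $G$-action extending the $N$-action: the resulting space is automatically a non-trivial $G$-boundary on which $N$ acts minimally and strongly proximally, which is exactly the desired conclusion (and in particular recovers the non-triviality provided by Furman's Proposition~7 of \cite{Furman-min-strg}).

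The construction of this $G$-action would proceed through the universal property of $\partial_{sp} N$ combined with the conjugation action of $G$ on its normal subgroup $N$. For $g \in G$, the map $\alpha_g(n) = gng^{-1}$ is a topological automorphism of $N$; twisting the $N$-action on $B$ through $\alpha_g$ yields another minimal strongly proximal $N$-space, which one checks is again \emph{universal} among $N$-boundaries (for any $N$-boundary $Y$, untwisting $Y$ by $\alpha_g$ and applying universality of $B$ produces the required map). The universal property then furnishes a unique homeomorphism $\tau_g$ of $B$ intertwining the two $N$-actions via $\alpha_g$, and uniqueness forces the relations $\tau_{gh} = \tau_g \tau_h$ together with $\tau_n =$ the original action of $n$ for $n \in N$. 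This defines an abstract action of $G$ on $B$ by homeomorphisms, extending the $N$-action and compatible with conjugation.

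I expect the main obstacle to be the \emph{joint continuity} of the map $G \times B \to B$, $(g,b) \mapsto \tau_g(b)$: the algebraic identities are forced by universality, but continuity in the $G$-variable is not formal. To secure it I would realize $\partial_{sp} N$ concretely inside the genuine $G$-space $\prob(\partial_{sp} G)$, using Furman's result to know that $N$ acts non-trivially on $\partial_{sp} G$ and hence that $\partial_{sp} N$ genuinely appears. Concretely, since $N \trianglelefteq G$ one may choose, by Zorn's lemma, a minimal non-empty closed convex $N$-invariant subset $Q \subseteq \prob(\partial_{sp} G)$; any two such are either equal or disjoint, so $G$ permutes them, and by Theorem \ref{thm-conv-bnd} the set $Q$ is $N$-irreducible, $N$ acts strongly proximally on it, and $\overline{\mathrm{ext}(Q)}$ is an $N$-boundary on which $N$ is minimal and strongly proximal. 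Because this model lives inside the ambient $G$-space $\prob(\partial_{sp} G)$, its $G$-translates and the induced identifications are weak-$\ast$ continuous, and second countability of $G$ allows one to upgrade these to a jointly continuous $G$-action and to match it with the abstract homeomorphisms $\tau_g$. Carrying out this matching — verifying that the weak-$\ast$ data assemble into a genuine topological $G$-action realizing the abstractly defined one — is the technical heart of the argument; once it is in place, the conclusion follows immediately from the first paragraph.
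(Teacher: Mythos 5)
Your opening reduction is correct: if one can equip a non-trivial minimal strongly proximal $N$-space with a continuous $G$-action extending the $N$-action, that space is automatically a non-trivial $G$-boundary on which $N$ acts minimally and strongly proximally, and the purely algebraic extension of the $N$-action on $\partial_{sp} N$ to $G$ via the universal property and conjugation is also fine. But the joint continuity of $G \times B \to B$, which you rightly single out as the crux, is not established by your third paragraph, and the sketch there has a concrete gap. A minimal non-empty closed convex $N$-invariant subset $Q \subseteq \prob(\partial_{sp} G)$ is not canonical: since such sets are pairwise equal or disjoint, $G$ only \emph{permutes} the family of them, and there is no reason for $Q$ itself to be $G$-invariant. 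So the ``concrete model'' $\overline{\mathrm{ext}(Q)}$ is an $N$-space sitting inside a $G$-space, but it is not itself a $G$-space, and you are back to the original problem of assembling the abstractly defined homeomorphisms $\tau_g$ into a continuous action on a single compact space; the ``matching'' you defer is not a technicality but the entire difficulty, and second countability alone does not resolve it. (Separately, $\overline{\mathrm{ext}(Q)}$ is merely \emph{some} non-trivial $N$-boundary, not $\partial_{sp} N$ --- harmless for the statement, but the identification you assert is unjustified.)

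The paper's proof indicates why a soft argument is unlikely to close this gap: it resolves the continuity problem by structure theory. After reducing to the case where $G$ has trivial amenable radical, it invokes the Burger--Monod theorem that such a $G$ has a finite-index open characteristic subgroup of the form $S \times G_{td}$, with $S$ a product of connected simple Lie groups and $G_{td}$ totally disconnected (together with Lemma \ref{lem-rel-bnd-fi}, which makes the statement insensitive to passing to finite-index open subgroups). If $N$ projects non-trivially to $S$ it maps densely onto a simple factor $S_i$ and $\partial_{sp} S_i$ does the job; otherwise $N$ lies in $G_{td}$, and one produces via van Dantzig a commensurated \emph{open} subgroup $O$ containing $N$ cocompactly, extends the $O$-action on $\partial_{sp} O$ to $G$ by the Dai--Glasner extension theorem for commensurated subgroups, and obtains continuity for free precisely because $O$ is open. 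To complete your approach you would need a substitute for exactly these inputs.
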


Before going into the proof, we introduce the following terminology:

\begin{defi} 
Let $G$ be a topological group, and $H$ a closed subgroup of $G$. We say that a $G$-boundary $X$ is a \textbf{$(G,H)$-boundary} if the $H$-action on $X$ is strongly proximal.
\end{defi}

The usual argument for the existence of a universal $G$-boundary extends to this setting:

\begin{lem} \label{lem-relbnd-univ}
Let $G$ be a topological group, and $H$ a closed subgroup of $G$. Then there exists a unique $(G,H)$-boundary $\partial_{sp}(G,H)$ such that for any $(G,H)$-boundary $X$, there is a $G$-equivariant surjection $\partial_{sp}(G,H) \to X$. 

Moreover if $H=N$ is normal in $G$, then $N$ acts minimally on $\partial_{sp}(G,N)$.
\end{lem}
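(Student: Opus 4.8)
The plan is to reproduce, in this relative setting, the classical construction of the Furstenberg boundary $\partial_{sp} G$, and to exploit normality only at the very end. The one genuine subtlety is set-theoretic: I want to form a product indexed by all $(G,H)$-boundaries, so I must first know these form a set. This is circumvented by the existence of $\partial_{sp} G$ itself: every $(G,H)$-boundary is in particular a $G$-boundary, hence a continuous $G$-equivariant image of $\partial_{sp} G$ by its universal property \cite[Prop.\ 4.6]{Furst-bd-th}, so isomorphism classes of $(G,H)$-boundaries form a set (they are quotients of $\partial_{sp} G$). Let $P = \prod_X X$ be the product of a set of representatives $X$ of $(G,H)$-boundaries, with the diagonal $G$-action. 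Since strong proximality is stable under products for both the $G$- and the $H$-action, $P$ is strongly proximal as a $G$-space and the $H$-action on $P$ is strongly proximal as well. A strongly proximal action is proximal, so $P$ contains a unique minimal closed $G$-invariant subset $M$ (as recorded in Lemma \ref{lem-fp-Xmin}); I set $\partial_{sp}(G,H) := M$.

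Next I would check that $M$ is a universal $(G,H)$-boundary. It is $G$-minimal by construction, and strong proximality passes to closed invariant subsets: $\prob(M)$ embeds as a closed $G$-invariant subset of $\prob(P)$, so a Dirac mass in the $G$-orbit closure of any $\mu \in \prob(M)$ already lies in $\prob(M)$. Hence $M$ is a $G$-boundary; as $H \leq G$ it is also closed and $H$-invariant, and the same restriction argument shows the $H$-action on $M$ is strongly proximal, so $M$ is a $(G,H)$-boundary. For any $(G,H)$-boundary $X$ the projection $\pi_X|_M : M \to X$ has closed $G$-invariant nonempty image, which by minimality of $X$ equals $X$; this is the required universal surjection.

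For uniqueness I would invoke the standard rigidity of boundaries. Two universal objects $U, U'$ admit $G$-equivariant surjections $f : U \to U'$ and $g : U' \to U$, and for any $G$-boundary the only continuous $G$-equivariant self-map is the identity: the graph of such a self-map is a nonempty closed $G$-invariant subset of $X \times X$, hence, by proximality of $X \times X$ and minimality of $X$, contains the unique minimal subset, the diagonal, which forces the map to be the identity. Therefore $gf = \mathrm{id}_U$ and $fg = \mathrm{id}_{U'}$, so $U \cong U'$.

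Finally, for the \emph{moreover} statement with $H = N$ normal, I would use proximality of the $N$-action on $M = \partial_{sp}(G,N)$, which holds since that action is strongly proximal by construction; thus $M$ has a unique $N$-minimal closed subset $M_0$. Normality then forces $M_0$ to be $G$-invariant: for $g \in G$ the translate $g M_0$ is closed, $N$-invariant (as $n\,g M_0 = g\,(g^{-1} n g) M_0 = g M_0$ because $g^{-1} n g \in N$), and $N$-minimal (the homeomorphism $g$ intertwines the $N$-action with itself through the automorphism $n \mapsto g^{-1} n g$), so $g M_0 = M_0$ by uniqueness. Hence $M_0$ is a nonempty closed $G$-invariant subset of the $G$-minimal space $M$, whence $M_0 = M$ and $N$ acts minimally. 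The main obstacle is really just the set-theoretic bookkeeping in the first paragraph; once the product is legitimate, the inheritance properties of strong proximality and the self-map rigidity are routine.
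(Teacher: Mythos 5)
Your proof is correct and follows essentially the same route as the paper: form the product of all $(G,H)$-boundaries, take the unique minimal closed $G$-invariant subset, verify it is a $(G,H)$-boundary with the universal surjection property, get uniqueness from the rigidity of equivariant self-maps of boundaries, and in the normal case use the uniqueness of the minimal $N$-invariant subset together with normality to promote it to a $G$-invariant subset. The extra care you take (the set-theoretic justification via $\partial_{sp}G$ and the spelled-out uniqueness argument) is sound but does not change the argument.
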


\begin{proof}
We consider the product $P$ of all $(G,H)$-boundaries. The $H$-action on $P$ remains strongly proximal. The group $G$ admits a unique minimal closed invariant subset $M$ in $P$, and the $H$-action on $M$ is strongly proximal. Hence $M$ satisfies the desired property. This shows existence, and the argument for uniqueness is the same as in the case $H=G$ \cite[II.4.1]{Gl-PF}.

By strong proximality, the subgroup $H$ has a unique minimal invariant subspace $Y$ in $\partial_{sp}(G,H)$. So when $H$ is a normal subgroup of $G$, we have that $Y$ must be $G$-invariant, and hence $Y = \partial_{sp}(G,H)$ by minimality of $G$.
\end{proof}

As in the classical case $H = G$, the universal property of the space $\partial_{sp}(G,H)$ allows to obtain the following:

\begin{lem} \label{lem-rel-bnd-fi}
Let $G$ be a topological group, and $N \leq L \leq G$ closed subgroups such that $N$ is normal in $G$ and $L$ has finite index in $G$. Then the $L$-action on $\partial_{sp}(L,N)$ extends to a $G$-action, and $\partial_{sp}(L,N)$ and $\partial_{sp}(G,N)$ are isomorphic $G$-spaces.
\end{lem}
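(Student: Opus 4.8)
The plan is to pin down both $\partial_{sp}(L,N)$ and $\partial_{sp}(G,N)$ by playing their universal properties (Lemma \ref{lem-relbnd-univ}) against each other, after checking that each is a boundary for the other group as well. Two standard ingredients will be used repeatedly. First, strong proximality passes to cocompact subgroups: this is the measure analogue of Lemma \ref{lem-EP-to-cocompact}, proved by the same device (writing $G=KL$ with $K$ compact, compressing a measure $\mu$ along $g_i=k_ih_i$, and extracting a convergent subnet of the $k_i$). Second, the rigidity of boundaries: if $X$ is minimal and $Y$ is a boundary, there is \emph{at most one} continuous equivariant map $X\to Y$, since the image of $X$ in $Y\times Y$ under two such maps is minimal and, by proximality of $Y$, meets the diagonal, hence lies in it (see \cite{Gl-PF}). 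In particular every equivariant self-map of a boundary is the identity.

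First I would reduce to the case $L\trianglelefteq G$. Let $L_0=\bigcap_{g\in G}gLg^{-1}$ be the normal core of $L$: it is closed, normal and of finite index in $G$, and since $N$ is normal in $G$ and contained in $L$ we have $N\leq L_0$. Granting the statement for normal finite-index subgroups and applying it both to $L_0\trianglelefteq G$ and to $L_0\trianglelefteq L$, one obtains isomorphisms $\partial_{sp}(L_0,N)\cong\partial_{sp}(G,N)$ and $\partial_{sp}(L_0,N)\cong\partial_{sp}(L,N)$ which combine to the desired conclusion. So from now on I assume $L\trianglelefteq G$.

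The heart of the argument, and the step I expect to be the main obstacle, is to construct a $G$-action on $Y:=\partial_{sp}(L,N)$ extending the $L$-action. For $g\in G$, conjugation $\ell\mapsto g^{-1}\ell g$ is an automorphism of $L$ (as $L\trianglelefteq G$) preserving $N$ (as $N\trianglelefteq G$). Precomposing the $L$-action on $Y$ with this automorphism yields another $(L,N)$-boundary, and because this twisting is a bijective operation on the class of $(L,N)$-boundaries it must again send the universal object to a universal object. By uniqueness of the universal object, where the rigidity above is essential, there is a \emph{unique} $L$-equivariant homeomorphism $\sigma_g:Y\to Y$ satisfying $\sigma_g(\ell y)=(g^{-1}\ell g)\,\sigma_g(y)$. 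Uniqueness forces $\sigma_g\sigma_h=\sigma_{hg}$, so $g\cdot y:=\sigma_{g^{-1}}(y)$ defines a genuine $G$-action; testing the defining relation on $g\in L$ identifies $\sigma_{\ell^{-1}}$ with left multiplication by $\ell$, so this action restricts to the original $L$-action. Joint continuity holds because a closed finite-index subgroup is open, whence $G$ is a finite disjoint union of clopen cosets $g_iL$, on each of which the action reads $(g_i\ell,y)\mapsto\sigma_{g_i^{-1}}(\ell y)$ and is manifestly continuous.

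It remains to verify the two boundary conditions and to conclude. The space $Y$ with this $G$-action is a $(G,N)$-boundary: it is $G$-minimal since already $N\leq L$ acts minimally on $\partial_{sp}(L,N)$ by Lemma \ref{lem-relbnd-univ}, it is $G$-strongly proximal since already $L$ acts strongly proximally, and $N$ acts strongly proximally by construction. Symmetrically, $\partial_{sp}(G,N)$ restricted to $L$ is an $(L,N)$-boundary: it is $L$-minimal because $N\leq L$ acts minimally on it (Lemma \ref{lem-relbnd-univ}), it is $L$-strongly proximal by the cocompactness argument above, and $N$ acts strongly proximally by definition. The universal properties then furnish a $G$-equivariant surjection $\psi:\partial_{sp}(G,N)\to Y$ and an $L$-equivariant surjection $\phi:Y\to\partial_{sp}(G,N)$. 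Both composites $\phi\circ\psi$ and $\psi\circ\phi$ are $L$-equivariant self-maps of boundaries, hence the identity by rigidity, so $\psi$ is a $G$-equivariant homeomorphism with inverse $\phi$. This exhibits $\partial_{sp}(L,N)\cong\partial_{sp}(G,N)$ as $G$-spaces and, in particular, records the promised extension of the $L$-action, completing the proof.
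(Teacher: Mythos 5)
Your proof is correct and takes essentially the same route as the paper, whose own proof simply defers to \cite[II.4.3-4]{Gl-PF} while highlighting exactly the two points you develop in detail: reducing to a finite-index subgroup of $L$ that is normal in $G$ and contains $N$, and using that conjugation by $G$ preserves $N$ to twist the universal $(L,N)$-boundary and invoke its uniqueness. Your write-up merely fills in the standard argument that the paper leaves to the reference.
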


\begin{proof}
The argument is exactly the same as in the usual case, so we refer the reader to \cite[II.4.3-4]{Gl-PF}. The proof uses the universal property from \ref{lem-relbnd-univ} and makes use of the existence of a finite index subgroup $L'$ of $L$ that is normal in $G$. The only thing that needs to be observed here is that $L'$ may be chosen to contain $N$ and that the action by conjugation by elements of $G$ on $L'$ indeed preserves $N$, because we have chosen $N$ to be normal in $G$.
\end{proof}

We now give the proof of Theorem \ref{thm-normal-nonam-Gbnd}:

\begin{proof}
Since $N$ is normal in $G$, the desired conclusion is equivalent to the existence of a non-trivial $(G,N)$-boundary. Upon replacing $G$ by the quotient $G / \mathrm{Rad}(G)$ of $G$ by its amenable radical, and $N$ by its image closure in $G / \mathrm{Rad}(G)$, we may assume without loss of generality that the group $G$ has trivial amenable radical. According to \cite[Th. 3.3.3]{Bur-Mon-bound-coho-rigid} (see also the more recent reference \cite{Burg-LC}), $G$ admits a finite index characteristic open subgroup $G'$ that is isomorphic to a direct product $S \times G_{td}$, where $S = \prod S_i$ is a direct product of (center-free, non-compact) connected simple Lie groups, and $G_{td}$ is a totally disconnected subgroup. Note that we may very well replace $N$ by $N \cap G'$, and since the desired conclusion is invariant by passing to a finite index open subgroup according to Lemma \ref{lem-rel-bnd-fi}, we may actually assume that $G = G' = S \times G_{td}$.

If the normal subgroup $N$ has a non-trivial projection to $S$, then there is a factor $S_i$ on which $N$ maps densely, and it follows that the $G$-action on $\partial_{sp} S_i$ is a non-trivial $(G,N)$-boundary. Hence we may assume that $N$ lies inside $G_{td}$, i.e.\ we are reduced to the case where $G$ is a totally disconnected group.

In this situation, by considering the preimage of a compact open subgroup in the quotient $G/N$, we find an open subgroup $O \leq G$ that is commensurated in $G$ and that contains $N$ as a cocompact subgroup. Since $O$ is commensurated in $G$, the uniqueness of $\partial_{sp} O$ and its invariance under taking finite index closed subgroups of $O$ allow to extend the action of $O$ on $X = \partial_{sp} O$ to an action of $G$ (see \cite[Th. 6.2]{Dai-Glasner}), and this action is continuous since $O$ is open in $G$. Note that $O$ is not amenable since $N$ is not amenable, so $X$ is non-trivial. Moreover the $N$-action on $X$ is minimal and strongly proximal since $N$ is cocompact and normal in $O$ \cite[II.3.1-2]{Gl-PF}, so it follows that $X$ is a non-trivial $(G,N)$-boundary. 
\end{proof}

\subsection{Weakly co-amenable subgroups} \label{subsec-pf-thm-lc}

In this paragraph we consider the following weakening of the notion of co-amenability. 

\begin{defi} \label{def-w-co-am}
Let $G$ be a topological group, and $H$ a subgroup of $G$. We say that $H$ is \textbf{weakly co-amenable} in $G$ if whenever $Q$ is a non-trivial convex compact $G$-space in which $H$ fixes a point, $Q$ is not irreducible.
\end{defi}

The following properties readily follow from the definition.

\begin{prop} \label{prop-wco-elem}
Let $K \leq H \leq G$ be subgroups of $G$.
\begin{enumerate}[label=(\roman*)]
\item \label{item-co-wc} If $H \leq G$ is co-amenable then $H$ is weakly co-amenable.
\item \label{item-wco-n} For a normal subgroup $N \lhd G$, weakly co-amenable is equivalent to co-amenable.
\item \label{item-am-wc} If $H \leq G$ is amenable and weakly co-amenable in $G$, then $G$ is amenable. 
\item \label{item-quo-wc} If $\varphi: G \rightarrow G'$ is continuous with dense image and $H \leq G$ is weakly co-amenable, then $\varphi(H)$ is weakly co-amenable in $G'$.
\item \label{item-sb-wc} If $K$ is weakly co-amenable in $G$, then $H$ is weakly co-amenable in $G$.
\item \label{item-trans-wc} If $K$ is co-amenable in $H$ and $H$ is weakly co-amenable in $G$, then $K$ is weakly co-amenable in $G$.
\end{enumerate}
\end{prop}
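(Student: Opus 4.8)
The plan is to verify all six items directly from the definitions, relying on two standard tools that the paper already uses implicitly: the fixed-point characterization of amenability (an amenable group fixes a point in every convex compact space on which it acts, and conversely a non-amenable group admits a convex compact space with no fixed point), and a Zorn's lemma argument producing, inside any non-empty convex compact $G$-space, a minimal---hence irreducible---non-empty closed convex $G$-invariant subspace (the intersection of a descending chain of such subsets is again non-empty, compact, convex and invariant).

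First I would dispatch the three formal items. For \ref{item-co-wc}, if $H$ is co-amenable and $Q$ is a non-trivial convex compact $G$-space with an $H$-fixed point, co-amenability yields a $G$-fixed point $q_0$, and $\{q_0\}$ is then a proper (since $Q$ is non-trivial) closed convex $G$-invariant subspace, so $Q$ is not irreducible. For \ref{item-sb-wc}, any $H$-fixed point of such a $Q$ is also a $K$-fixed point, so weak co-amenability of $K$ gives that $Q$ is not irreducible. For \ref{item-trans-wc}, given a non-trivial convex compact $G$-space $Q$ with a $K$-fixed point, I would regard $Q$ as a convex compact $H$-space; co-amenability of $K$ in $H$ produces an $H$-fixed point in $Q$, and weak co-amenability of $H$ in $G$ then shows $Q$ is not irreducible.

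For the two items requiring the irreducible-subspace extraction: in \ref{item-wco-n} the direction co-amenable $\Rightarrow$ weakly co-amenable is \ref{item-co-wc}, so I would prove the converse. Let $N \lhd G$ be weakly co-amenable and $Q$ a convex compact $G$-space with an $N$-fixed point. The fixed-point set $Q^N$ is non-empty, convex, compact, and---crucially using normality of $N$---$G$-invariant, with $N$ acting trivially on it. An irreducible closed convex $G$-invariant subspace $Q_0 \subseteq Q^N$ is then fixed pointwise by $N$; weak co-amenability forces $Q_0$ to be trivial, i.e.\ a single $G$-fixed point, so $N$ is co-amenable. Item \ref{item-am-wc} is similar in spirit: if $G$ were non-amenable it would carry a convex compact $G$-space $Q$ without $G$-fixed points; an irreducible subspace $Q_0 \subseteq Q$ is non-trivial (a singleton would be $G$-fixed), yet the amenable subgroup $H$ fixes a point of $Q_0$, contradicting weak co-amenability.

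Finally, for \ref{item-quo-wc}, given $\varphi : G \to G'$ with dense image and a non-trivial convex compact $G'$-space $Q'$ in which $\varphi(H)$ fixes a point, I would pull the action back along $\varphi$ to make $Q'$ a convex compact $G$-space on which $H$ fixes a point; weak co-amenability of $H$ in $G$ gives that $Q'$ is not irreducible as a $G$-space, and one transfers this to $G'$. The only delicate points in the whole proof are this last transfer and the normality step in \ref{item-wco-n}. In \ref{item-quo-wc}, since $\varphi(G)$ is dense in $G'$ and both actions are continuous, a closed convex subspace is $G$-invariant if and only if it is $G'$-invariant, so irreducibility coincides for the two actions; I expect this density identification, together with the use of normality to ensure $Q^N$ (rather than merely some irreducible subspace of $Q$) lies in the $N$-fixed locus, to be the only places a reader needs a line of justification.
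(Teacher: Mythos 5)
Your proposal is correct and follows essentially the same route as the paper: item \ref{item-co-wc} is verified identically, item \ref{item-wco-n} uses the same two ingredients (normality makes the $N$-fixed-point set $G$-invariant, and Zorn's lemma extracts an irreducible closed convex invariant subspace), merely phrased in the direct rather than contrapositive direction, and the remaining four items --- which the paper explicitly leaves to the reader as similar verifications --- are filled in by you with the expected arguments, including the two genuinely delicate points (the density transfer of invariance in \ref{item-quo-wc} and the role of normality in \ref{item-wco-n}).
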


\begin{proof}
\ref{item-co-wc} If $Q$ is non-trivial convex compact $G$-space with $H$-fixed points, then there is a $G$-fixed point by co-amenability of $H$ in $G$, so $Q$ is not irreducible.

\ref{item-wco-n} If $N \lhd G$ is not co-amenable, there is a convex $Q$ such that $\mathrm{Fix}(N)$ is non-empty but $\mathrm{Fix}(G)$ is empty. Since $N$ is normal $\mathrm{Fix}(N)$ is $G$-invariant, so that by Zorn's lemma $\mathrm{Fix}(N)$ contains an irreducible convex $G$-space, which is non-trivial since $\mathrm{Fix}(G)$ is empty. This shows $N$ is not weakly co-amenable.

The proofs of \ref{item-am-wc}, \ref{item-quo-wc}, \ref{item-sb-wc} and \ref{item-trans-wc} are similar verifications, and we leave them to the reader.
\end{proof}

\begin{rmq}
As for co-amenability, it is natural to wonder whether weak co-amenability of $K$ in $G$ implies weak co-amenability of $K$ in $H$. In view of \ref{item-wco-n}, the same counter-examples given in \cite{MonPop-coam} show that the answer is negative in general.
\end{rmq}

By the correspondence between irreducible convex compact $G$-spaces and $G$-boundaries, weak co-amenability admits the following characterization:

\begin{prop} \label{prop-reform-w-coam}
A subgroup $H \leq G$ is weakly co-amenable in $G$ if and only if for every non-trivial $G$-boundary $X$, there is no probability measure on $X$ that is fixed by $H$.
\end{prop}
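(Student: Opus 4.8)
The plan is to use the correspondence furnished by Theorem~\ref{thm-conv-bnd} between $G$-boundaries and irreducible convex compact $G$-spaces, after rephrasing weak co-amenability in contrapositive form: $H$ fails to be weakly co-amenable in $G$ precisely when there exists a non-trivial \emph{irreducible} convex compact $G$-space $Q$ in which $H$ fixes a point. Under this reformulation the statement to be proved becomes the equivalence: such a $Q$ exists if and only if some non-trivial $G$-boundary $X$ carries an $H$-fixed probability measure.

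One implication is straightforward. Starting from an $H$-fixed $\mu \in \prob(X)$ on a non-trivial $G$-boundary $X$, I would simply take $Q = \prob(X)$, which is a non-trivial convex compact $G$-space in which $\mu$ is an $H$-fixed point; it then only remains to check that $\prob(X)$ is irreducible. Here strong proximality enters: if $C \subseteq \prob(X)$ is closed, convex and $G$-invariant, then strong proximality of the $G$-action on $X$ forces $\overline{G\nu}$ to contain a Dirac mass for each $\nu \in C$, so $C$ meets $X$ (viewed as the Dirac masses); minimality of $X$ gives $X \subseteq C$, and convexity gives $C = \prob(X)$. Hence $\prob(X)$ is irreducible, and the existence of an $H$-fixed measure on $X$ witnesses the failure of weak co-amenability, which is the easy ($\Rightarrow$) direction of the proposition.

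For the converse — the substantial direction — I would argue by contraposition: assuming $H$ fixes a point $q$ in some non-trivial irreducible convex compact $G$-space $Q$, I want to manufacture an $H$-fixed probability measure on a non-trivial $G$-boundary. By Theorem~\ref{thm-conv-bnd} the $G$-action on $Q$ is strongly proximal and $X := \overline{\mathrm{ext}(Q)}$ is a non-trivial $G$-boundary, and the barycenter map $\beta \colon \prob(X) \to Q$ is a continuous, affine, $G$-equivariant surjection. If $q$ happened to be an extreme point then $q \in X$ and $\delta_q$ would already be the desired measure, so the entire difficulty is concentrated in the case of an interior fixed point, where one must lift $q$ through $\beta$ to an $H$-fixed measure on $X$.

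I expect this lifting to be the main obstacle. The fiber $\beta^{-1}(q) \subseteq \prob(X)$ is a non-empty compact convex $H$-invariant set, but since $\beta$ is far from injective (irreducible affine flows need not be simplices) and $H$ need not be amenable, one cannot simply invoke a fixed-point theorem to locate an $H$-fixed point inside it; the naive averaging argument is exactly what breaks down. My intended way around this is to exploit strong proximality together with the $H$-invariance of $q$: choosing a representing measure $\nu$ with $\beta(\nu) = q$ and a net $(g_i)$ in $G$ contracting toward a Dirac mass at an extreme point, one controls the translates $g_i\nu$ and $g_i h \nu$ simultaneously (they share the image $g_i q$ under $\beta$, which converges to a single extreme point with a unique representing measure), and attempts to extract an $H$-invariant limit on $X$. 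The delicate point is that this contraction is governed by conjugates of $H$ rather than $H$ itself, so the genuine content of the proof lies in turning the \emph{point} fixed by $H$ into a \emph{measure} fixed by $H$; converting this heuristic into a correct argument is the step that will require real care rather than formal manipulation.
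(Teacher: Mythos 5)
Your first direction is correct and is essentially all the paper itself records: its proof of this proposition is the single line ``Follows from Theorem \ref{thm-conv-bnd}'', and for that implication the argument is exactly yours --- if $H$ fixes $\mu\in\prob(X)$ for a non-trivial boundary $X$, then $\prob(X)$ is a non-trivial convex compact $G$-space with an $H$-fixed point, and it is irreducible by strong proximality, minimality of $X$ and Krein--Milman, so $H$ is not weakly co-amenable.

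The converse is where the real content sits, and while you have located the difficulty correctly, your write-up does not prove it and the repair you sketch cannot be made to work. After reducing to: $Q$ non-trivial irreducible, $H$ fixing $q\in Q$, $X=\overline{\mathrm{ext}(Q)}$ and $\beta\colon\prob(X)\to Q$ the barycenter surjection, the fiber $\beta^{-1}(q)$ is a non-empty compact convex $H$-invariant set, but extracting an $H$-fixed point from it requires either amenability of $H$ or injectivity of $\beta$, neither of which is available. Your contraction heuristic yields no information at all: by strong proximality of the $G$-action on $X$, for \emph{any} two measures $\nu,\nu'\in\prob(X)$ there is a net $(g_i)$ with $g_i\nu$ and $g_i\nu'$ converging to the \emph{same} Dirac mass (apply strong proximality to $\tfrac{1}{2}(\nu+\nu')$ and note that if $\tfrac{1}{2}(\alpha_i+\beta_i)\to\delta_x$ with $\alpha_i,\beta_i$ probability measures, then $\alpha_i\to\delta_x$ and $\beta_i\to\delta_x$). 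Hence the simultaneous convergence of $g_i\nu$ and $g_ih\nu$ to a common $\delta_p$ holds for arbitrary $\nu$ and $h$, whether or not $H$ fixes anything, and no $H$-invariant limit can be extracted from it. The paper does not address this lifting either --- Theorem \ref{thm-conv-bnd} supplies the dictionary between irreducible convex compact $G$-spaces $Q$ and the boundaries $\overline{\mathrm{ext}(Q)}$, but not the passage from an $H$-fixed point of $Q$ to an $H$-fixed measure on $\overline{\mathrm{ext}(Q)}$; and it is precisely this ``if'' direction that is invoked later (Propositions \ref{prop-no-disj-wcoam} and \ref{prop-am-wcoam}), where weak co-amenability is deduced from the absence of $H$-fixed measures on boundaries. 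So you should treat this half as genuinely open in your write-up: either show that the $H$-fixed point of $Q$ is represented by an $H$-invariant measure on $X$ (e.g.\ by showing the relevant irreducible affine flows are quotients of $\prob(X)$ with singleton fibers over the points in question, or by some injectivity or averaging argument), or produce a different non-trivial boundary carrying an $H$-fixed measure; the net-contraction route should be abandoned rather than refined.
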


\begin{proof}
Follows from Theorem \ref{thm-conv-bnd}.
\end{proof}

The following shows how weak co-amenability naturally appears for boundary indivisible groups (see also Proposition \ref{prop-am-wcoam}).

\begin{prop} \label{prop-no-disj-wcoam}
Let $G$ be a boundary indivisible locally compact group, and $L$ a closed subgroup of $G$ that is boundary-minimal and uniformly recurrent. Then $L$ is weakly co-amenable in $G$.
\end{prop}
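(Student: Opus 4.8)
The plan is to reduce everything to the measure-theoretic reformulation of weak co-amenability provided by Proposition \ref{prop-reform-w-coam}: it suffices to show that no non-trivial $G$-boundary carries an $L$-invariant probability measure. So I would argue by contradiction, assuming there is a non-trivial $G$-boundary $X$ together with a measure $\mu \in \prob(X)$ fixed by $L$, and then manufacture a pair of disjoint non-trivial $G$-boundaries, which contradicts the hypothesis that $G$ is boundary indivisible.

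The heart of the argument, and the only place where the assumption that $L$ is uniformly recurrent is really used, is to upgrade the $L$-fixed \emph{measure} on $X$ into an $L$-fixed \emph{point} of $X$. Let $\H = \mathrm{cls}\{gLg^{-1} : g \in G\} \in \urs(G)$ be the URS determined by $L$. Since $L$ fixes $\mu$, the subgroup $L$ lies in the set of closed subgroups fixing a point of the compact $G$-space $\prob(X)$; by the lemma stating that $\{H \in \sub(G) : H \text{ fixes a point in } \prob(X)\}$ is closed and $G$-invariant, this set contains every conjugate of $L$ and hence the whole closure $\H$. Thus $\H$ fixes a point of $\prob(X)$. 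Because $X$ is strongly proximal, the Dirac measures constitute the unique minimal closed $G$-invariant subset of $\prob(X)$, so Lemma \ref{lem-fp-Xmin} applies and yields that $\H$ fixes a point in $X$ itself. In particular $L$ fixes some $x_0 \in X$, i.e.\ $L \leq G_{x_0}$.

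Once a genuine fixed point is available the conclusion is immediate. By boundary-minimality, fix a non-trivial $G$-boundary $Y$ on which $L$ acts minimally. Since $L \leq G_{x_0}$, the larger subgroup $G_{x_0}$ a fortiori acts minimally on $Y$. Applying Lemma \ref{lem-triv-disj} to the pair of minimal compact $G$-spaces $Y$ and $X$, with the point $x_0 \in X$ whose stabilizer $G_{x_0}$ acts minimally on $Y$, I conclude that $X$ and $Y$ are disjoint. As $X$ and $Y$ are both non-trivial $G$-boundaries, this contradicts boundary indivisibility of $G$. Therefore no non-trivial $G$-boundary admits an $L$-fixed probability measure, and $L$ is weakly co-amenable in $G$ by Proposition \ref{prop-reform-w-coam}.

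I expect the substantive step to be precisely the passage from an invariant measure to an invariant point: in general an $L$-fixed measure on a boundary need not arise from an $L$-fixed point, and it is exactly uniform recurrence of $L$ — making its conjugacy-class closure a URS, to which the closedness-of-fixed-points lemma and Lemma \ref{lem-fp-Xmin} can be applied — that forces an actual fixed point. The remaining ingredients, namely strong proximality identifying the unique minimal subset of $\prob(X)$ and the disjointness criterion of Lemma \ref{lem-triv-disj}, are then routine, and boundary indivisibility supplies the final contradiction.
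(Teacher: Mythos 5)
Your proof is correct and follows essentially the same route as the paper: the paper likewise passes from the $L$-fixed measure to an $\H$-fixed (hence $L$-fixed) point of the boundary via Lemma \ref{lem-fp-Xmin}, and then invokes Lemma \ref{lem-triv-disj} together with boundary indivisibility, the only difference being that the paper phrases it directly (showing the boundary with an $L$-fixed measure must be trivial) rather than by contradiction. Your closing remark correctly identifies the measure-to-point upgrade as the step where uniform recurrence is used.
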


\begin{proof}
Write $\H$ for the closure of $L^G$ in $\sub(G)$, which is a URS by assumption. Let $X$ be a non-trivial $G$-boundary on which $L$ acts minimally, and let $Y$ be a $G$-boundary on which $L$ fixes a probability measure. We have to show that $Y$ is trivial. Since $\H$ fixes a point in $\prob(Y)$ and the $G$-action on $\prob(Y)$ is strongly proximal by Theorem \ref{thm-conv-bnd}, $\H$ fixes a point in $Y$ by Lemma \ref{lem-fp-Xmin}. So there exists $y \in Y$ such that $L \leq G_y$, and it follows that $G_y$ acts minimally on $X$. Therefore by Lemma \ref{lem-triv-disj} $X$ and $Y$ are disjoint, and since $X$ is non-trivial and $G$ is boundary indivisible, this is possible only if $Y$ is trivial.
\end{proof}

\subsection{The proof of Theorem \ref{thmintro-normal-coamenable-lc}}

In this paragraph we shall give the proof of Theorem \ref{thmintro-normal-coamenable-lc} from the introduction. We will make use of the following result.

\begin{prop}[Furstenberg] \label{prop-furst}
Let $G$ be a locally compact group, $H \leq G$ a closed subgroup of finite covolume, and $X$ a $G$-boundary. Then $X$ is a $H$-boundary.
\end{prop}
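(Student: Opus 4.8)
The plan is to show directly that the $H$-action on $X$ is both minimal and strongly proximal, the two conditions defining an $H$-boundary. The only extra datum available is the $G$-invariant probability measure $\nu$ on $G/H$ provided by finite covolume, and the device I would use throughout is \emph{averaging support functions over $G/H$}: this manufactures, out of an $H$-invariant closed subset of $\prob(X)$, a $G$-invariant compact convex subset of $\prob(X)$, which can then be pinned down because $X$ (the Dirac masses) is the unique minimal closed $G$-invariant subset of $\prob(X)$ — this last fact is exactly the content of $X$ being a $G$-boundary (strong proximality gives a Dirac in every orbit closure, minimality forces it to fill out $X$). I would first reduce to the case where $X$ is metrizable, and treat the general case at the end by a standard inverse-limit argument.

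For minimality, suppose $Y \subseteq X$ is a non-empty closed $H$-invariant subset. Since $Y$ is $H$-invariant, the continuous function $g \mapsto \max_{x \in gY} f(x)$ descends to $G/H$ for each $f \in C(X)$, and I would set $\Phi(f) = \int_{G/H} \max_{x \in gY} f(x)\, d\nu(gH)$. This $\Phi$ is sublinear, so it is the support function $\sigma_C$ of a unique weak*-compact convex set $C \subseteq \mathcal{M}(X)$; the normalisation $\Phi(\pm 1) = \pm 1$ together with $\Phi(f) \le \max_X f$ forces $C \subseteq \prob(X)$. A change of variables using left $G$-invariance of $\nu$ shows $\Phi(g_0 f) = \Phi(f)$, so $C$ is $G$-invariant. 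As $C$ is a non-empty closed convex $G$-invariant subset of $\prob(X)$, it contains the unique $G$-minimal subset $X$, whence $\sigma_C \ge \sigma_X$, i.e. $\Phi(f) \ge \max_X f$; combined with the trivial reverse inequality this gives $\Phi = \sigma_X$. Thus $\int_{G/H}(\max_X f - \max_{gY} f)\,d\nu = 0$ with non-negative integrand, so for each $f$ one has $\max_{gY} f = \max_X f$ for $\nu$-a.e.\ $gH$. Using separability of $C(X)$ (metrizable case) to run over a countable dense family of $f$, some (indeed a.e.) $g$ satisfies this for all $f$, forcing $gY = X$ and hence $Y = X$.

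For strong proximality I would run the identical averaging, replacing $Y$ by $M = \overline{H\mu} \subseteq \prob(X)$ for an arbitrary $\mu \in \prob(X)$, and using the support function $\sigma_{gM}(f) = \max_{\eta \in gM}\eta(f)$ in place of $\max_{gY} f$. Exactly as above the averaged functional equals $\sigma_X$, so for a.e.\ $g$ one gets $\sigma_{gM} = \sigma_X$, that is $\overline{\operatorname{conv}}(gM) = \prob(X)$. By Milman's theorem the extreme points of $\prob(X)$, namely the Dirac masses $X$, must then lie in $gM$; translating back, $M = \overline{H\mu}$ contains a Dirac mass. Since $\mu$ was arbitrary this is precisely strong proximality of the $H$-action, and no appeal to Theorem~\ref{thm-conv-bnd} or to $M$ being an $H$-boundary is even needed at this step.

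The main obstacle is the passage, for a \emph{single} translate $g$, from the family of $\nu$-a.e.\ identities (one per test function $f$) to a statement valid for all $f$ simultaneously; this is what forces the metrizability hypothesis, via separability of $C(X)$. To remove it I would use that every $G$-boundary is an inverse limit of metrizable $G$-boundaries (as $G$ is second countable): if $H$ failed to act minimally or strongly proximally on $X$, the offending compact set ($Y$, or $M$ disjoint from the Diracs) would survive in some metrizable factor $X'$ — because two disjoint compacts in $\prob(X) = \varprojlim \prob(X'_\alpha)$ have disjoint images at some finite stage — contradicting the metrizable case already proved. The secondary technical point, which I would verify carefully, is that the sublinear functional $\Phi$ genuinely corresponds to a $G$-invariant convex body \emph{inside} $\prob(X)$ rather than merely in $\mathcal{M}(X)$; the normalisation computation above is what guarantees this.
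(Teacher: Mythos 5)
Your argument is correct, but it follows a genuinely different route from the paper's. The paper proves the single statement that \emph{every} closed $H$-invariant subset $Q'$ of $\prob(X)$ contains $X$: it forms the bundle $\X=\{(gH,\mu):\mu\in g(Q')\}$ over $G/H$, considers measures on $\X$ projecting to $m_{G/H}$, pushes them forward into $\prob(\prob(X))$, and invokes strong proximality of the $G$-action on $\prob(X)$ (Theorem \ref{thm-conv-bnd}) to conclude that each $x\in X$ lies in $g(Q')$ for $m_{G/H}$-almost every, hence every, $g$; taking $Q'=\prob(Y)$ and $Q'=\overline{H\mu}$ then gives minimality and strong proximality simultaneously, with no metrizability hypothesis. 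You instead average support functions over $G/H$ to manufacture a $G$-invariant compact convex subset $C$ of $\prob(X)$, pin it down as $\prob(X)$ using the uniqueness of the minimal set, and conclude via positivity of the integrand and Milman's theorem. Both proofs rest on the same two pillars ($m_{G/H}$ and the fact that $X$ is the unique minimal closed $G$-invariant subset of $\prob(X)$); yours trades the paper's measure-theoretic lifting to $\X$ for Hahn--Banach/support-function duality, which is arguably more self-contained. One remark: the entire final third of your proof (the passage from $\nu$-a.e.\ identities to a single $g$ via separability of $C(X)$, and hence the metrizability reduction and the inverse-limit argument) is unnecessary. The integrand $gH\mapsto \max_X f-\max_{gY}f$ (resp.\ $\max_X f-\sigma_{gM}(f)$) is \emph{continuous} on $G/H$, being a supremum of a jointly continuous function over the compact set $Y$ (resp.\ $M$), and $m_{G/H}$ has full support since its support is a non-empty closed $G$-invariant subset of the transitive $G$-space $G/H$. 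A non-negative continuous function with vanishing integral against a fully supported measure is identically zero, so your identities hold for \emph{all} $g$ and all $f$ at once; taking $g=e$ finishes both steps with no separability assumption. With that simplification your argument is complete as stated for arbitrary compact $X$.
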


For completeness we repeat the argument from \cite[Prop.\ 4.4]{Furst-bourb}.

\begin{proof}
Write $Q = \prob(X)$, and consider a closed $H$-invariant subspace $Q' \subseteq Q$. We have to show that $X \subseteq Q'$. The set \[ \X = \left\{(gH,\mu) \, : \, \mu \in g(Q')\right\} \subseteq G/H \times Q \] is a well-defined, closed, $G$-invariant subspace of $G/H \times Q$. Fix a $G$-invariant probability measure $m_{G/H}$ on $G/H$, and consider \[ Y = \left\{ \nu \in \prob(\X) \, : \, p_{G/H}^*(\nu) = m_{G/H} \right\},\] where $p_{G/H}$ is the projection from $G/H \times Q$ onto the first factor, and $p_{G/H}^*$ is the induced push-forward operator. Then $Y$ is a closed (and hence compact) $G$-invariant subspace of $\prob(\X)$, and $p_Q^*: Y \rightarrow \prob(Q)$ is continuous. So $p_Q^*(Y)$ is closed in $\prob(Q)$, and by strong proximality of the $G$-action on $Q$ (Theorem \ref{thm-conv-bnd}), $p_Q^*(Y)$ must intersect $Q$. Now $X$ being the unique minimal closed $G$-invariant subspace of $Q$, one has $X \subseteq p_Q^*(Y)$. For every $x \in X$, we therefore have $\nu_x \in \prob(\X)$ such that $p_{G/H}^*(\nu_x) = m_{G/H}$ and $p_{Q}^*(\nu_x) = \delta_x$. This implies $m_{G/H} \left\{g H \, : \, x \in g(Q')\right\} = 1$ for every $x$, and it easily follows that $X \subseteq Q'$.
\end{proof}

\begin{rmq}
In the case when $H$ is cocompact in $G$, strong proximality of the action of $H$ on $X$ also follows from \cite[II.3.1]{Gl-PF} applied to the action on $\prob(X)$; and minimality follows \cite[IV.5.1]{Gl-PF} from disjointness of the $G$-spaces $G/H$ and $X$ \cite[III.6.1]{Gl-PF}.
\end{rmq}

\begin{thm} \label{thm-G-no-disj-bnd}
Assume that $H$ admits an amenable URS $\H$ that comes from an extremely proximal action, and such that $\env(\H)$ is co-amenable in $H$. Let $G$ be a locally compact group containing $H$ as a closed subgroup of finite covolume. Then:
\begin{enumerate}[label=(\alph*)]
\item \label{item-case-G} $G$ is boundary indivisible.
\item \label{item-case-sequence} More generally if $L$ is a locally compact group such that there is a sequence a topological group homomorphisms $G = G_0 \rightarrow G_1 \rightarrow \ldots \rightarrow G_n = L$ such that either $G_i \rightarrow G_{i+1}$ has dense image, or $G_i \rightarrow G_{i+1}$ is an embedding of $G_i$ as a closed subgroup of finite covolume in $G_{i+1}$; then $L$ is boundary indivisible.
\end{enumerate}

In particular whenever $G$ maps continuously and with dense image to a product $G_1 \times G_2$, one factor $G_i$ must be amenable.
\end{thm}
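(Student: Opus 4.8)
The plan is to prove the two bulleted statements together by isolating two stability properties of boundary indivisibility and then deducing the final assertion from them. The base case for everything is that $H$ itself is boundary indivisible: this is exactly Proposition \ref{prop-A_G-EP}\ref{item-prop-A-EP-2} applied to the group $H$, whose hypotheses (namely that $H$ carries an amenable URS $\H$ coming from an extremely proximal action with $\env(\H)$ co-amenable in $H$) are precisely the standing assumptions of the theorem.

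The two stability properties I would establish are: (i) if $\varphi\colon G_1\to G_2$ is a continuous homomorphism with dense image and $G_1$ is boundary indivisible, then so is $G_2$; and (ii) if $G_1$ embeds as a closed subgroup of finite covolume in $G_2$ and $G_1$ is boundary indivisible, then so is $G_2$. Granting these, statement \ref{item-case-G} is the instance of (ii) with $(G_1,G_2)=(H,G)$, and statement \ref{item-case-sequence} follows by a straightforward induction along the sequence $G=G_0\to\cdots\to G_n=L$, each step being covered by (i) or (ii).

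The crucial point, and the one I expect to be the main obstacle, is to compare disjointness of boundaries relative to a group and relative to a finite-covolume subgroup, since naively the comparison seems to run the wrong way. For (ii), let $X,Y$ be two non-trivial $G_2$-boundaries. By Furstenberg's Proposition \ref{prop-furst} they are (non-trivial) $G_1$-boundaries, so by boundary indivisibility of $G_1$ they are not disjoint as $G_1$-spaces, i.e.\ $X\times Y$ is \emph{not} $G_1$-minimal; I must deduce that $X\times Y$ is not $G_2$-minimal. A priori $G_2$-minimality is weaker than $G_1$-minimality, so the implication looks to fail — but this is exactly where strong proximality saves the argument. If $X\times Y$ were $G_2$-minimal then, being also strongly proximal (strong proximality passes to products), it would be a $G_2$-boundary, and Proposition \ref{prop-furst} would then force it to be a $G_1$-boundary, hence $G_1$-minimal, a contradiction. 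Thus $X\times Y$ is $G_2$-minimal if and only if it is $G_1$-minimal, which is exactly the equivalence of $G_1$- and $G_2$-disjointness and yields boundary indivisibility of $G_2$. For (i) the comparison is easier: since $\varphi$ has dense image, the $G_1$- and $G_2$-orbit closures of any point of any $G_2$-space coincide, so a $G_2$-boundary pulls back to a $G_1$-boundary and $G_1$- and $G_2$-minimality of $X\times Y$ agree outright.

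Finally, for the concluding assertion, suppose $G\to G_1\times G_2$ is continuous with dense image; by (i) the product $G_1\times G_2$ is boundary indivisible. If both factors were non-amenable, then by Theorem \ref{thm-amean-bnd} each $G_i$ would carry a non-trivial boundary $\partial_{sp}G_i$, which I view as a non-trivial $(G_1\times G_2)$-boundary through the projection $G_1\times G_2\to G_i$. These two boundaries are disjoint, because the diagonal $(G_1\times G_2)$-action on $\partial_{sp}G_1\times\partial_{sp}G_2$ is the independent product action, whose orbit closures are products of orbit closures and which is therefore minimal. This contradicts boundary indivisibility of $G_1\times G_2$, so one of the factors $G_i$ must be amenable.
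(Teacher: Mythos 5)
Your proposal is correct and follows essentially the same route as the paper: establish boundary indivisibility of $H$ via Proposition \ref{prop-A_G-EP}, transfer it to $G$ using Proposition \ref{prop-furst} together with the fact that a minimal product of boundaries is itself a boundary (your contrapositive phrasing of this step is just the paper's argument read in the other direction), note stability under dense continuous images, and conclude the product statement by exhibiting $\partial_{sp}G_1$ and $\partial_{sp}G_2$ as disjoint boundaries. No gaps.
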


\begin{proof}
Since $\H$ is amenable, $\H$ comes from an extremely proximal action, and $\env(\H)$ is co-amenable in $H$, the group $H$ is boundary indivisible by Proposition \ref{prop-A_G-EP}. Now by Proposition \ref{prop-furst}, the property of being boundary indivisible is inherited from closed subgroups of finite covolume. Indeed if $X,Y$ are disjoint $G$-boundaries, i.e.\ $X \times Y$ is a $G$-boundary, then $X \times Y$ is also a boundary for $H$ by Proposition \ref{prop-furst}, hence of $X$ or $Y$ must be trivial since $H$ is boundary indivisible. This shows \ref{item-case-G}. Since boundary indivisibility passes to dense continuous images, and is inherited from closed subgroups of finite covolume, \ref{item-case-sequence} follows from \ref{item-case-G}.

Finally if $G_1,G_2$ are as in the last statement and $X_i = \partial_{sp} G_i$, then $X_1 \times X_2$ is a boundary for $G_1 \times G_2$, which is boundary indivisible by the previous paragraph. So one factor $X_i$ must be trivial, which exactly means that $G_i$ is amenable by Theorem \ref{thm-amean-bnd}.
\end{proof}

\begin{rmq}
In the proof of Theorem \ref{thm-G-no-disj-bnd} we obtain that $G$ is boundary indivisible from the same property for $H$, which is itself deduced from Proposition \ref{prop-A_G-EP} (which in turn relies notably on Proposition \ref{prop-same-urs}). We note that the order in which the argument is developed seems to matter, in the sense that the arguments applied to $H$ do not seem to be applicable directly to the group $G$. Indeed we do not know whether a group $G$ as in Theorem \ref{thm-G-no-disj-bnd} falls into the setting of Proposition \ref{prop-same-urs}, i.e.\ we do not know whether all non-trivial $G$-boundaries have the same stabilizer URS. We actually believe this might be false in general.
\end{rmq}

We note at this point that the proof of Theorem \ref{thmintro-normal-coamenable-lc} from the introduction is now complete. Indeed the fact that a group $G$ as in Theorem \ref{thmintro-normal-coamenable-lc} is boundary indivisible is Theorem \ref{thm-G-no-disj-bnd}. Statement \ref{item-intr-urs-wco} follows from Proposition \ref{prop-no-disj-wcoam}, and statement \ref{item-intr-N-am-coam} follows from \ref{item-intr-urs-wco} together with Theorem \ref{thm-normal-nonam-Gbnd}.

The following remark explains a comment from the introduction.

\begin{rmq} \label{rmq-urs-wco-not-co}
Theorem 1.1 from \cite{LB-irr-wreath} show that the countable groups $G(F,F')$ embed as lattices in a group $G$ of the form $G = N \rtimes \aut$, and under sertain assumptions on the permutation groups $F,F'$, all the assumptions of Theorem \ref{thmintro-normal-coamenable-lc} are satisfied (see Section \ref{sec-trees}). If $M$ is a cocompact subgroup of $\aut$ acting minimally on $\partial T_d$, then $L = N \rtimes M$ is a cocompact (hence uniformly recurrent) subgroup of $G$, and $L$ is boundary-minimal in $G$ since $\partial T_d$ is a $G$-boundary. However when $M$ is non-unimodular (e.g.\ if $M$ is a non-ascending HNN-extension of a profinite group $K$ over open subgroups $K_1,K_2$ such that $K_1$ and $K_2$ do not have the same index in $K$), then $M$ is not co-amenable in $\aut$, and $L$ is not co-amenable in $G$. This shows that the conclusion of statement \ref{item-intr-urs-wco} in Theorem \ref{thmintro-normal-coamenable-lc} that $L$ is weakly co-amenable in $G$ cannot be strengthened by saying that $L$ is co-amenable in $G$.
\end{rmq}

\subsection{The proof of Theorem \ref{thm-intro-URS-lattice-prod}}

Recall that if $G$ is a topological group, the \textbf{quasi-center} $\mathrm{QZ}(G)$ of $G$ is the subgroup of $G$ containing the elements $g \in G$ having an open centralizer. Note that $\mathrm{QZ}(G)$ contains the elements having a discrete conjugacy class, so in particular it contains all discrete normal subgroups. Recall also that the \textbf{elliptic radical} of $G$ is the largest normal subgroup of $G$ in which every compact subset generates a relatively compact subgroup. It is a closed characteristic subgroup of $G$.

The following is slightly more complete than Theorem \ref{thm-intro-URS-lattice-prod} from the introduction.

\begin{thm} \label{thm-URS-lattice-prod}
Let $\Gamma$ be a countable group whose Furstenberg URS $\A_\Gamma$ comes from a faithful and extremely proximal action, and assume that $\envA$ is co-amenable in $\Gamma$. Let $G$ be a locally compact group containing $\Gamma$ as a lattice. Consider the following properties:
\begin{enumerate}[label=(\alph*)]
\item \label{item-env-fg} $\envA$ is finitely generated;
\item \label{item-env-fi} $\envA$ has finite index in $\Gamma$, and $\Gamma$ admits a finitely generated subgroup with finite centralizer;
\item \label{item-mon-fi} $\envA$ has finite index in $\Gamma$ and $\envA$ has finite abelianization.
\end{enumerate}
Then:
\begin{itemize}
\item \ref{item-env-fg}, \ref{item-env-fi} both imply that $G$ cannot be a product of two non-compact groups.
\item \ref{item-mon-fi} implies that any continuous morphism with dense image from $G$ to a product of locally compact groups $G \rightarrow G_1 \times G_2$ is such that one factor $G_i$ is compact.
\end{itemize}
\end{thm}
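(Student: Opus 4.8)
The plan is to argue in the two stages indicated in the outline: first use boundary indivisibility to force one factor to be amenable, then invoke the structure theory of Section~\ref{sec-epntf} to upgrade \emph{amenable} to \emph{compact}. Throughout I would assume the faithful extremely proximal action defining $\A_\Gamma$ is not topologically free, since otherwise $\mathcal{S}_\Gamma(Z)=\A_\Gamma$ is trivial and, $\envA$ being co-amenable, $\Gamma$ would be amenable; but $\Gamma$ acts faithfully and extremely proximally on a space with at least three points, which is strongly proximal by Theorem~\ref{thm-ep-sp}, whereas amenable groups have only trivial boundaries by Theorem~\ref{thm-amean-bnd}. Hence Proposition~\ref{prop-core-deriv0} applies: $\Gamma$ is monolithic with $\mon(\Gamma)=[\Gamma^0,\Gamma^0]$ where $\Gamma^0=\envA$, and every non-trivial normal subgroup of $\Gamma$ has trivial centralizer. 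The first stage is then immediate: I apply Theorem~\ref{thm-G-no-disj-bnd} to the identity map $G\to G_1\times G_2$ for statements \ref{item-env-fg},\ref{item-env-fi}, and to the given dense morphism for \ref{item-mon-fi}; in each case one factor, say $G_2$, is amenable. Assuming for contradiction that both factors are non-compact, the remaining task is to show that the amenable factor $G_2$ is in fact compact.

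For the direct-product statements I would write $N_i=\Gamma\cap G_i$; these are normal in $\Gamma$ and commute, so by the trivial-centralizer property at most one of them is non-trivial, and hence at least one projection $p_i|_\Gamma$ is injective. Two mechanisms then produce the contradiction. \emph{(Mechanism A.)} If $\Gamma$ injects into the amenable factor $G_2$, I run the commensurated-subgroup dichotomy as in Corollary~\ref{cor-epntf-embed-amen}: an open subgroup $U\le G_2$ containing $G_2^0$ cocompactly is commensurated, so $\Gamma\cap U$ is commensurated in $\Gamma$, and by Proposition~\ref{prop-commens-dicho} either $\mon(\Gamma)\le U$ or some $\mathbb{F}_2\le\Gamma$ (from Proposition~\ref{prop-free-wander}) meets $U$ trivially and thus sits discretely in the amenable group $G_2$, which is impossible. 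Letting $U$ vary places $\mon(\Gamma)$ inside $G_2^0$; after a Montgomery--Zippin reduction (passing to the quotient by a compact normal subgroup of the identity component so that this component becomes a Lie group) the argument of Corollary~\ref{cor-epntf-embed-amen} concludes, the final contradiction coming from the linearity of the automorphism group of a connected Lie group together with the absence of a faithful linear representation (Proposition~\ref{prop-lin-rep-core}) and the non-abelianness of $\mon(\Gamma)$ (Proposition~\ref{prop-core-deriv0}). \emph{(Mechanism B.)} If instead $\Gamma$ injects only into the non-amenable factor, then $\mon(\Gamma)\le\Gamma\cap G_1$ lies in $\ker(p_2|_\Gamma)$; under the finiteness hypotheses this kernel has finite index in $\Gamma$, hence contains a lattice of $G$, so the closed normal subgroup $\ker p_2$ has finite covolume and $G/\ker p_2$ is compact — forcing its dense image $G_2$ to be compact, the desired contradiction.

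For the morphism statement \ref{item-mon-fi}, $\Gamma$ is virtually simple with simple finite-index monolith $M=\mon(\Gamma)$ by Proposition~\ref{prop-core-deriv0}\ref{item-Gamma-virt-s}, and this rigidity is what allows possibly non-injective maps to be handled. Given a dense morphism $\phi\colon G\to G_1\times G_2$ with $G_2$ amenable, $\phi(M)$ is trivial or simple, so each projection $p_i\phi(M)$ is either trivial or injective. When $p_2\phi(M)\cong M$, Mechanism~A applied to the embedding $M\hookrightarrow G_2$ gives a contradiction; when $p_2\phi(M)=1$, the kernel of $p_2\circ\phi$ contains the finite-index subgroup $M$, hence contains a lattice, so $G/\ker(p_2\circ\phi)$ is compact and its dense image $G_2$ is compact — again a contradiction. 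This robustness of $M$ under the possibly non-injective $\phi$ is exactly why \ref{item-mon-fi} controls arbitrary dense morphisms, whereas \ref{item-env-fg},\ref{item-env-fi} control only the genuine intersections $\Gamma\cap G_i$ and therefore only direct-product decompositions.

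I expect the main obstacle to be Mechanism~A, and more precisely the passage from \emph{amenable} to \emph{compact}. The delicate points are the Montgomery--Zippin reduction to a Lie identity component while keeping track of the image of $\mon(\Gamma)$, which need be neither discrete nor injectively embedded, and the fact that a bare amenable — indeed even a compact — factor may receive dense copies of non-amenable simple groups (as $\mathbb{F}_2$ embeds densely in $\mathrm{SO}(3)$). It is precisely this phenomenon that makes the finiteness hypotheses \ref{item-env-fg}--\ref{item-mon-fi} and the linearity obstruction of Proposition~\ref{prop-lin-rep-core} indispensable: they are what ultimately prevent $\mon(\Gamma)$ from hiding inside a non-compact amenable factor.
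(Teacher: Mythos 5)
Your first stage is correct and matches the paper: after reducing to the non-topologically-free case, Theorem \ref{thm-G-no-disj-bnd} makes one factor, say $G_2$, amenable, and Corollary \ref{cor-epntf-embed-amen} (after a Montgomery--Zippin reduction) shows that the projection of $\Gamma$ to $G_2$ cannot be injective, so that $M=\mon(\Gamma)=[\envA,\envA]$ lies in the kernel. Your treatment of case \ref{item-mon-fi} is then essentially the paper's: $M$ has finite index, hence is a lattice contained in the closed normal subgroup $\varphi^{-1}(G_1\times 1)$, which is therefore cocompact, forcing $G_2$ to be compact. (Minor quibble: it is cleaner to apply Corollary \ref{cor-epntf-embed-amen} to $\Gamma\to G_2$ directly, as the paper does, rather than to $M\hookrightarrow G_2$, which requires re-verifying the hypotheses (EP) for $M$.)

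The genuine gap is in cases \ref{item-env-fg} and \ref{item-env-fi}. Your Mechanism B rests on the assertion that \enquote{under the finiteness hypotheses} the kernel of $p_2|_\Gamma$ has finite index in $\Gamma$. This is only true under \ref{item-mon-fi}. Under \ref{item-env-fg} the envelope may have infinite index in $\Gamma$ (think of $G(F,F')$ with $\Gamma/\Gamma^+\cong\mathbb{Z}$ or $D_\infty$), and under \ref{item-env-fi} the envelope has finite index but its abelianization may be infinite, so $M=[\envA,\envA]$ and a fortiori the kernel $N_1=\Gamma\cap G_1\supseteq M$ need not have finite index; the quotient $\Gamma/N_1$ is merely (virtually) abelian, e.g.\ it could be $\mathbb{Z}$. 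Since Mechanism A is unavailable (Corollary \ref{cor-epntf-embed-amen} has already shown $p_2|_\Gamma$ is \emph{not} injective), the case where $N_1$ is non-trivial of infinite index is simply not handled, and this is precisely where the paper's proof does its real work. Note that your argument never actually uses the hypotheses of \ref{item-env-fg} (finite generation of $\envA$) or \ref{item-env-fi} (a finitely generated subgroup with finite centralizer) --- a sign that something is missing. The paper uses them via Lemma \ref{lem-reduc-fac-cp}: one must produce a subgroup $L\leq G$ whose centralizer contains $G_2$, is open, and meets $\Gamma$ in a finite subgroup. For \ref{item-env-fg} this is done by showing that $H_1=\overline{p_1(\envA)}$ is compactly generated with $H_1/M$ abelian, analyzing its elliptic radical to conclude that $p_1(\envA)$ is a \emph{discrete}, finitely generated normal subgroup of $G$, hence lies in the quasi-center and has open centralizer, to which Lemma \ref{lem-reduc-fac-cp} applies; for \ref{item-env-fi} one reduces to $\Gamma=\envA$, observes that $p_2(\Gamma)$ and hence $G_2$ is abelian and central, so $\Gamma$ is normal in $G$ and lies in the quasi-center, and applies Lemma \ref{lem-reduc-fac-cp} to the given finitely generated subgroup with finite centralizer. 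None of this is recoverable from the finite-index shortcut you propose.
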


\begin{proof}
Of course we may assume that $\envA$ is non-trivial, since otherwise there is nothing to prove. According to Proposition \ref{prop-core-deriv0} we have in particular that $\Gamma$ is monolithic, and $\mon(\Gamma) = [\envA,\envA]$. For simplicity in all the proof we write $E = \envA$ and $M = \mon(\Gamma) = [E,E]$.

Assume that $\varphi: G \rightarrow G_1 \times G_2$ is continuous with dense image, and denote by $p_i$ the projection $G_1 \times G_2 \rightarrow G_i$, $i=1,2$. We will show that one factor must be compact. Upon modding out by the maximal compact normal subgroup of the identity component $G^0$, which intersects $\Gamma$ trivially since $\Gamma$ has no non-trivial finite normal subgroup (Proposition \ref{prop-core-deriv0}), we may also assume that $G^0$ has no non-trivial compact normal subgroup. This implies in particular that $G^0$ is a connected Lie group \cite{MontZip}.

By the assumption that $E$ is co-amenable in $\Gamma$, we can apply Theorem \ref{thm-G-no-disj-bnd}, which says that one factor, say $G_2$, must be amenable. We then apply Corollary \ref{cor-epntf-embed-amen}, which tells us that the map $p_2 \circ \varphi$ is not injective in restriction to $\Gamma$. By definition of $M$ we deduce that $M \leq \varphi^{-1}(G_1 \times 1)$.

Assume now that \ref{item-mon-fi} holds. Then $M$, being of finite index in $\Gamma$, is a lattice in $G$, and is contained in the closed normal subgroup $\varphi^{-1}(G_1 \times 1)$. Therefore we deduce that $\varphi^{-1}(G_1 \times 1)$ is cocompact in $G$, and that $p_2 \circ \varphi(G)$ is a compact subgroup of $G_2$. Since $p_2 \circ \varphi(G)$ is also dense in $G_2$, we have that $G_2$ is compact.

We now have to deal with \ref{item-env-fg}, \ref{item-env-fi}, in which case $\varphi$ is the identity and $G = G_1 \times G_2$. Without loss of generality, we may assume that the projections $p_i(\Gamma)$ are dense. The proofs of the two cases will share a common mechanism, given by the following easy fact:

\begin{lem} \label{lem-reduc-fac-cp}
If there exists a subgroup $L \leq G$ whose centralizer $C_G(L)$ contains $G_2$, $C_G(L)$ is open in $G$, and $C_G(L) \cap \Gamma$ is finite, then $G_2$ is compact.
\end{lem}

Indeed, since $\Gamma$ must intersect an open subgroup $O \leq G$ along a lattice of $O$, it follows that $C_G(L)$ is compact, and a fortiori so is $G_2$. 

We start with case \ref{item-env-fg}. Note that $\Gamma$ cannot embed in the group $\mathrm{Aut}(G_1^0)$ (Proposition \ref{prop-lin-rep-core}), and hence the subgroup $M$ centralizes $G_1^0$. The subgroup $M \cap G_1^0$ is normal in $\Gamma$ and cannot be equal to $M$ (otherwise $M$ would be abelian), so $M \cap G_1^0 = 1$. Consider $H_1 = \overline{p_1(E)}$, which is normal in $G_1$ by density of $p_1(\Gamma)$. Note that $H_1$ is compactly generated in view of the assumption that $E$ is finitely generated. Since $M=[E,E]$, the group $H_1 / M$ is abelian, and therefore of the form $\mathbb{Z}^n \times \mathbb{R}^m \times C$ for some compact group $C$ \cite[Th.\ 9.8]{HR}. It follows that the group $Q_1 = H_1 /H_1^0$ admits a discrete cocompact normal subgroup $\Delta$, which is an extension of $M$ by a free abelian group. Being characteristically simple and non-amenable, the group $M$ has trivial elliptic radical, so the group $\Delta$ also has trivial elliptic radical. Now since $Q_1$ is compactly generated, there is a compact open normal subgroup $K$ of $Q_1$ such that $K \rtimes \Delta$ has finite index in $Q_1$ (see e.g.\ \cite[Lem. 4.4]{BCGM-lat-am}), so we deduce that $Q_1$ has a compact open elliptic radical. Since any connected group has compact elliptic radical \cite{MontZip}, we deduce that $H_1$ has a compact elliptic radical $R$, and $H_1 /R$ is discrete-by-connected. The compact group $R$ is also normal in $G$, and therefore we can mod out by $R$ and assume that $R$ is trivial, so that $H_1^0$ is open in $H_1$. Since $H_1^0$ centralizes $M$, any $\gamma \in E$ such that $p_1(\gamma)$ belongs to $H_1^0$ centralizes $M$, and therefore is trivial by Proposition \ref{prop-core-deriv0}. Therefore $H_1^0$ is open in $H_1$ and intersects the dense subgroup $p_1(E)$ trivially, so it follows that $H_1^0$ is trivial, and $p_1(E)$ is a discrete subgroup of $G$. 

Observe that $p_1(E)$ is centralized by $G_2$ and normalized by $G_1$, and hence is normal in $G$. Being a discrete normal subgroup of $G$, $p_1(E)$ therefore lies in the quasi-center $\mathrm{QZ}(G)$. Since $p_1(E)$ is finitely generated, the centralizer of $p_1(E)$ in $G$ is actually open in $G$. Moreover the subgroup $\Gamma \cap C_G(p_1(E))$ is normal in $\Gamma$ since $C_G(p_1(E))$ is normal in $G$, but clearly does not contain $M$, and hence is trivial by Proposition \ref{prop-core-deriv0}. Therefore we can apply Lemma \ref{lem-reduc-fac-cp} with $L = p_1(E)$, and we obtain the conclusion.

We now deal with \ref{item-env-fi}. Let $Z$ be a minimal compact $\Gamma$-space on which the $\Gamma$-action is faithful and extremely proximal and such that $\mathcal{S}_\Gamma(Z) = \A_\Gamma$. By Proposition \ref{prop-furst} (actually an easy case of it) the action of $E$ on $Z$ is also minimal, and it is extremely proximal by Lemma \ref{lem-EP-to-cocompact}. Moreover the associated stabilizer URS remains equal to $\A_\Gamma$, and is also the Furstenberg URS of $E$ by Proposition \ref{prop-furst-urs-inside-env}. So $E$ satisfies all the assumptions of case \ref{item-env-fi} of the theorem, so it is enough to prove the result under the additional assumption $\Gamma = E$. In this case we have $M = \left[\Gamma, \Gamma\right]$ thanks to Proposition \ref{prop-core-deriv0}, so it follows that $p_2(\Gamma)$ is abelian. By density of the projection the group $G_2$ is also abelian, and hence $G_2$ lies in the center of $G$. Therefore $\Gamma$ is normalized by the dense subgroup $\Gamma G_2$, and it follows that $\Gamma$ is normal in $G$. In particular $\Gamma \leq \mathrm{QZ}(G)$, and the conclusion follows by applying Lemma \ref{lem-reduc-fac-cp} with $L$ a f.g.\ subgroup of $\Gamma$ such that $C_\Gamma(L)$ is finite.
\end{proof}

\begin{rmq}
The conclusions of the theorem also hold for any group $H$ that is commensurable with $G$ up to compact kernels (two groups $G_1,G_2$ are commensurable up to compact kernels if there exist $K_i \leq H_i \leq G_i$ such that $H_i$ is open and of finite index in $G_i$, $K_i$ is a compact normal subgroup of $H_i$, and $H_1 / K_1$ and $H_2 / K_2$ are isomorphic). The arguments follow the same lines, by observing that if $H$ is a finite index open subgroup of $G$ and $K$ a compact normal subgroup of $H$, then the subgroup $\Lambda = \Gamma \cap H$ (which, as a finite index subgroup of $\Gamma$, necessarily contains $M$) embeds as a lattice in $H / K$. We leave the details to the reader.
\end{rmq}

\section{Groups acting on trees} \label{sec-trees}

\subsection{Amenable URS's and groups acting on trees}

In this paragraph $T$ is a locally finite tree, and $H$ acts continuously on $T$ by isometries. The assumption that $T$ is locally finite is not essential here, and the results admit appropriate generalizations for non-locally finite trees (using the compactification from \cite[Prop.\ 4.2]{Mon-Sha-cocy}).

Recall that the $H$-action on $T$ is \textbf{minimal} if there is no proper invariant subtree, and of \textbf{general type} if $H$ has no finite orbit in $T \cup \partial T$. The following is well-known, and essentially goes back to Tits \cite{Tits70} (see also \cite{Pa-Va} and Proposition \ref{prop-folk-EP} for details).

\begin{prop} \label{prop-tree-ep}
If the action of $H \leq \mathrm{Aut}(T)$ is minimal and of general type, then the action of $H$ on $\partial T$ is minimal and extremely proximal.
\end{prop}

Theorem \ref{thm-G-no-disj-bnd} therefore implies the following result:

\begin{cor} \label{cor-tree-lc}
Let $H \leq \mathrm{Aut}(T)$ be a locally compact group whose action on $T$ is continuous, minimal and of general type. Assume that end stabilizers are amenable, and the envelope of $\mathcal{S}_H(\partial T)$ is co-amenable in $H$. Assume $H$ embeds as a subgroup of finite covolume in $G$. Then whenever $G$ maps continuously and with dense image to a product $G_1 \times G_2$, one factor $G_i$ must be amenable.
\end{cor}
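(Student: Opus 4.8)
The plan is to realize this corollary as a direct instance of Theorem \ref{thm-G-no-disj-bnd}, the only work being to verify its hypotheses for the stabilizer URS $\H := \mathcal{S}_H(\partial T)$ attached to the $H$-action on $\partial T$, after which the final assertion of that theorem can be quoted verbatim.

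First I would observe that $\H$ comes from an extremely proximal action. Since the $H$-action on $T$ is continuous, minimal and of general type, Proposition \ref{prop-tree-ep} ensures that the induced $H$-action on $\partial T$ is minimal and extremely proximal; as $\H$ is by definition the stabilizer URS of this action, it comes from an extremely proximal action in the required sense.

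The one point deserving an argument is that $\H$ is amenable, and I would establish it by showing that every subgroup occurring in $\H$ is contained in some end stabilizer. Indeed $\H \subseteq \mathcal{S}_H^\ast(\partial T) = \mathrm{cls}\left\{H_\xi \, : \, \xi \in \partial T\right\}$, so any $K \in \H$ is a Chabauty limit of end stabilizers $H_{\xi_i}$; by compactness of $\partial T$ we may assume $\xi_i \to \xi$, and upper semicontinuity of the map $\mathrm{Stab}: \partial T \to \sub(H)$ then forces $K \leq H_\xi$. Since end stabilizers are amenable by hypothesis and closed subgroups of amenable groups are amenable, every $K \in \H$ is amenable, i.e.\ $\H$ is an amenable URS.

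The remaining hypothesis of Theorem \ref{thm-G-no-disj-bnd}, namely that $\env(\H)$ be co-amenable in $H$, is precisely one of the standing assumptions of the corollary (with $\H = \mathcal{S}_H(\partial T)$). Thus $H$ satisfies all the hypotheses of Theorem \ref{thm-G-no-disj-bnd} and embeds as a closed subgroup of finite covolume in $G$, so the concluding \enquote{in particular} clause of that theorem applies directly: whenever $G$ maps continuously and with dense image to a product $G_1 \times G_2$, one factor $G_i$ is amenable. As the genuine content is carried by Theorem \ref{thm-G-no-disj-bnd} (and ultimately by Proposition \ref{prop-A_G-EP} and Proposition \ref{prop-furst}), no real obstacle arises at this stage; the only mildly delicate point is the amenability of $\H$, which is dispatched above through upper semicontinuity of the stabilizer map rather than through any closure-under-limits black box.
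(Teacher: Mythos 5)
Your proposal is correct and follows exactly the paper's route: the paper derives this corollary directly from Theorem \ref{thm-G-no-disj-bnd}, with Proposition \ref{prop-tree-ep} supplying the extreme proximality of the boundary action. Your additional verification that $\mathcal{S}_H(\partial T)$ is amenable (via upper semicontinuity of the stabilizer map, so that every element of the URS sits inside an end stabilizer) is a correct filling-in of a detail the paper leaves implicit.
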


The conclusion of Corollary \ref{cor-tree-lc} implies in particular that whenever $H$ embeds in $G$ with finite covolume, then $G$ cannot be a product of two non-amenable groups. The following example, which is largely inspired from \cite[Ex.\ II.8]{CaMo-decomp}, shows that the group $G$ can nonetheless be a product of two non-compact groups. 

\begin{ex} \label{ex-sl2-prod-non-cpt}
Let $\mathbf{k} = \mathbb{F}_p(\!(t)\!)$ be the field of Laurent series over the finite field $\mathbb{F}_p$, and let $\alpha \in \Aut(\mathbf{k})$ be a non-trivial automorphism of $\mathbf{k}$. The group $L = \mathrm{SL}(2,\mathbf{k})$ acts on a $(p+1)$-regular tree, $2$-transitively and with amenable stabilizers on the boundary. This action extends to a continuous action of $H = L \rtimes_{\alpha} \mathbb{Z}$, so that $H$ satisfies all the assumptions of Corollary \ref{cor-tree-lc}. Nevertheless $H$ embeds diagonally in the product $G = (L \rtimes \Aut(\mathbf{k})) \times \mathbb{Z}$ as a closed subgroup of finite covolume since $G/H$ is compact and $H$ and $G$ are unimodular.
\end{ex}

We will need the following fact. If $A$ is a subtree of $T$, by the fixator of $A$ we mean the subgroup fixing pointwise $A$. 

\begin{prop} \label{prop-env-A-tree}
Let $\Gamma \leq \mathrm{Aut}(T)$ be a countable group whose action on $T$ is minimal and of general type, and such that end-stabilizers in $\Gamma$ are amenable. Then $\A_\Gamma = \mathcal{S}_\Gamma(\partial T)$, and $\envA$ is the subgroup generated by fixators of half-trees.
\end{prop}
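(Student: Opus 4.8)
The plan is to handle the two assertions separately. The equality $\A_\Gamma = \mathcal{S}_\Gamma(\partial T)$ is a direct application of the general machinery: by Proposition \ref{prop-tree-ep} the $\Gamma$-action on $\partial T$ is minimal and extremely proximal, and since the general type hypothesis forces $\partial T$ to be infinite (in particular to have at least three points), Theorem \ref{thm-ep-sp} makes it strongly proximal, so $\partial T$ is a $\Gamma$-boundary. The hypothesis that every end stabilizer $\Gamma_\xi$ is amenable then lets me invoke Proposition \ref{prop-background-A_G}(b): for any boundary $X$ one has $\A_\Gamma \preccurlyeq \mathcal{S}_\Gamma(X)$, with equality as soon as some point of $X$ has amenable stabilizer. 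Applied to $X=\partial T$ and any end $\xi$, this yields $\A_\Gamma = \mathcal{S}_\Gamma(\partial T)$ at once.

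For the envelope, the first part gives $\env(\A_\Gamma)=\env(\mathcal{S}_\Gamma(\partial T))$, and Lemma \ref{lem-equiv-fix-pt} (with $X=\partial T$) identifies this with the subgroup generated by those $\gamma\in\Gamma$ for which $\fix(\gamma)\subseteq\partial T$ has non-empty interior. So it suffices to show this generating set and the set of fixators of half-trees generate the same subgroup. One inclusion is immediate: a half-tree fixator fixes every end lying beyond that half-tree, and the ends beyond a half-tree form a non-empty clopen subset of $\partial T$, so such a $\gamma$ has $\fix(\gamma)$ with non-empty interior. The real content is the converse, which I would establish by proving the sharper statement that any $\gamma$ with $\fix(\gamma)$ of non-empty interior already fixes some half-tree pointwise.

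To prove this, suppose $\fix(\gamma)\supseteq\partial T_e$, the clopen set of ends beyond a half-tree $T_e$ with root vertex $x_0$ (legitimate since such sets form a basis). As $\gamma$ fixes the infinite set $\partial T_e$ pointwise it preserves $T_e$ and fixes the median vertex of any three of these ends, so $\gamma$ is elliptic and $F=\fix_T(\gamma)$ is a non-empty subtree. Three geometric facts then drive the argument: (i) minimality of general type forces $T$ to have no leaves, so every vertex of $T_e$ lies on a geodesic ray to some end of $\partial T_e$; (ii) $F$ is convex, so for $v_0\in F$ and $\xi\in\partial F$ one has $[v_0,\xi)\subseteq F$; and (iii) an isometry fixing a vertex $w$ and an end $\xi$ fixes the entire ray $[w,\xi)$ pointwise. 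Using (ii) with $\xi\in\partial T_e$, the ray $[v_0,\xi)$ penetrates arbitrarily deep into $T_e$, so $F$ contains a vertex $w\in T_e\setminus\{x_0\}$; letting $U_w$ be the half-tree of all points beyond $w$ (away from $x_0$), we have $\partial U_w\subseteq\partial T_e\subseteq\fix(\gamma)$ and $w\in F$. By (i) every vertex of $U_w$ lies on a ray from $w$ to an end of $\partial U_w$, and (iii) shows $\gamma$ fixes each such ray pointwise; hence $\gamma$ fixes $U_w$ pointwise and is a half-tree fixator, completing the reverse inclusion.

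I expect the third paragraph to be the main obstacle: converting the purely boundary hypothesis that $\gamma$ fixes an open set of ends into pointwise fixation of an honest half-tree inside $T$. This is exactly where the no-leaves consequence of minimality is needed (to guarantee $U_w$ is "filled out" by rays to $\partial U_w$) together with the ray-rigidity observation (iii); without both, one only controls the convex hull of $\partial T_e$ rather than a genuine half-tree, which would not suffice.
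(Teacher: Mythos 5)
Your proof is correct and takes essentially the same route as the paper's: Proposition \ref{prop-tree-ep} plus Theorem \ref{thm-ep-sp} to see that $\partial T$ is a $\Gamma$-boundary with amenable stabilizers, Proposition \ref{prop-background-A_G} for $\A_\Gamma = \mathcal{S}_\Gamma(\partial T)$, and Lemma \ref{lem-equiv-fix-pt} for the envelope. The paper compresses your entire third paragraph into the single sentence \enquote{since half-trees form a basis of the topology in $\partial T$, the statement follows}; your verification that fixing an open set of ends pointwise forces pointwise fixation of an actual half-tree (via the no-leaves consequence of minimality together with ray-rigidity) is precisely the content left implicit there, and it is sound.
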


\begin{proof}
Since the $\Gamma$-action on $\partial T$ is extremely proximal, it is also strongly proximal by Theorem \ref{thm-ep-sp}. So $\partial T$ is a $\Gamma$-boundary with amenable stabilizers, and we deduce that $\A_\Gamma = \mathcal{S}_\Gamma(\partial T)$ by Proposition \ref{prop-background-A_G}.

Now according to Lemma \ref{lem-equiv-fix-pt}, the subgroup $\env(\mathcal{S}_\Gamma(\partial T)) = \envA$ is generated by the elements $\gamma \in \Gamma$ whose fixed point set in $\partial T$ has non-empty interior. Since half-trees form a basis of the topology in $\partial T$, the statement follows.
\end{proof}

Before going to the proof of Corollary \ref{cor-intro-tree-disc}, we make the following observation:

\begin{rmq} \label{rmq-virt-spl}
For $\Gamma$ acting on $T$ (action minimal and general type) such that the action on $\partial T$ is not topologically free, virtual simplicity of $\Gamma$ is equivalent to $\Gamma^0$ being of finite index in $\Gamma$ and $\Gamma^0$ having finite abelianization, where $\Gamma^0$ is the subgroup generated by fixators of half-trees. See statement \ref{item-Gamma-virt-s} of Proposition \ref{prop-core-deriv0}.
\end{rmq}

\begin{proof}[Proof of Corollary \ref{cor-intro-tree-disc}]
In view of Proposition \ref{prop-env-A-tree}, the assumptions on $\Gamma$ imply that the Furstenberg URS of $\Gamma$ comes from a faithful extremely proximal action. The fact that end-stabilizers are all non-trivial means that the action of $\Gamma$ on $\partial T$ is not topologically free, and by the above observation virtual simplicity of $\Gamma$ is equivalent to $\envA$ being of finite index in $\Gamma$ and with finite abelianization. The first statement of the corollary therefore follows from Theorem \ref{thm-URS-lattice-prod}, case \ref{item-mon-fi}, and the second statement from Theorem \ref{thmintro-normal-coamenable-lc}.
\end{proof}

\subsection{Groups with prescribed local action} \label{subsec-background-g(f,f')}

In the next paragraphs we will illustrate the results of the previous sections on a family of groups acting on trees, which contains instances of discrete and non-discrete groups. The purpose of this paragraph is to recall the definition and give a brief description of known properties of these groups.

We will denote by $\Omega$ a set of cardinality $d \geq 3$ and by $T_d$ a $d$-regular tree. The vertex set and edge set of $T_d$ will be denoted respectively $V_d$ and $E_d$. We fix a coloring $c: E_d \rightarrow \Omega$ such that neighbouring edges have different colors. For every $g \in \aut$ and every $v \in V_d$, the action of $g$ on the star around $v$ gives rise to a permutation of $\Omega$, denoted $\sigma(g,v)$, and called the local permutation of $g$ at $v$. These permutations satisfy the identity \begin{equation} \label{eq:loc-perm} \sigma(gh,v) = \sigma(g,hv) \sigma(h,v) \end{equation} for every $g,h \in \aut$ and $v \in V_d$. 


Given a permutation group $F \leq \Sy$, the group $U(F)$ introduced by Burger and Mozes in \cite{BM-IHES} is the group of automorphisms $g \in \aut$ such that $\sigma(g,v) \in F$ for all $v$. It is a closed cocompact subgroup of $\aut$.

\begin{defi}
Given $F \leq F'\leq \Sy$, we denote by $G(F,F')$ the group of automorphisms $g \in \aut$ such that $\sigma(g,v) \in F'$ for all $v$ and $\sigma(g,v) \in F$ for all but finitely many $v$.
\end{defi}

That $G(F,F')$ is indeed a subgroup of $\aut$ follows from (\ref{eq:loc-perm}), and we note that we have $U(F) \leq G(F,F') \leq U(F')$. We make the following observation for future reference.

\begin{rmq} \label{rmq-indep}
As it follows from the definition, any element $\gamma \in G(F,F')$ fixing an edge $e$ can be (uniquely) written as $\gamma = \gamma_1 \gamma_2$, where each $\gamma_i$ belongs to $G(F,F')$ and fixes one of the two half-trees defined by $e$.
\end{rmq}

We recall that a permutation group is \textbf{semi-regular} if it acts freely, and \textbf{regular} if it acts freely and transitively. In the sequel we always assume that $F'$ preserves the $F$-orbits in $\Omega$ (this property ensures that the local action is indeed isomorphic to $F'$, see \cite[Lem.\ 3.3]{LB-ae}). The groups $G(F,F')$ satisfy the following properties (see \cite{LB-ae}):

\begin{enumerate}
	\item The group $G(F,F')$ is dense in the locally compact group $U(F')$. In particular $G(F,\Sy)$ is a dense subgroup of $\aut$.
	\item $G(F,F')$ admits a locally compact group topology (defined by requiring that the inclusion of $U(F)$ is continuous and open), and the action of $G(F,F')$ on $T_d$ is continuous but not proper as soon as $F \neq F'$. Endowed with this topology, the group $G(F,F')$ is compactly generated.
	\item stabilizers of vertices and stabilizers of ends in $G(F,F')$ are respectively locally elliptic and (locally elliptic)-by-cyclic. In particular they are amenable.
	\item $G(F,F')$ is a discrete group if and only if $F$ is semi-regular. When this is so, the group $G(F,F')$ is therefore a finitely generated group, and stabilizers of vertices and stabilizers of ends in $G(F,F')$ are respectively locally finite and (locally finite)-by-cyclic.	
\end{enumerate}

When $F$ is semi-regular and $F \neq F'$, the groups $G(F,F')$ are instances of groups obtained from a more general construction described in \cite[Sec.\ 4]{LB-cs} (more precisely, a variation of it), which provides discrete groups with a continuous Furstenberg URS (the later being the stabilizer URS associated to the action on the boundary of the tree on which these groups act). In the particular case of the groups $G(F,F')$, the Furstenberg URS can be explicitly described, see Proposition 4.28 and Corollary 4.29 in \cite{LBMB}.

\medskip

\textit{In the sequel whenever we use letters $F$ and $F'$, we will always mean that $F,F'$ are permutation groups on a set $\Omega$, that $F'$ contains $F$ and preserves the $F$-orbits in $\Omega$. Following \cite{Tits70}, we will denote by $G(F,F')^+$ the subgroup of $G(F,F')$ generated by fixators of edges, and by $G(F,F')^\ast$ the subgroup of index two in $G(F,F')$ preserving the bipartition of $T_d$. 
}

\medskip

The following result, also obtained in \cite[Prop.\ 9.16]{CRWes-approx}, supplements simplicity results obtained in \cite{LB-ae}, where the index of the simple subgroup was found explicitly under appropriate assumptions on the permutation groups.

\begin{prop} \label{prop-g-virt-simpl}
The group $G(F,F')$ has a simple subgroup of finite index if and only if $F$ is transitive and $F'$ is generated by its point stabilizers.
\end{prop}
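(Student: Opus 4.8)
The plan is to reduce virtual simplicity to a finite-index statement about the Tits subgroup, and then to compute the relevant quotient by Bass--Serre theory. Throughout write $G = G(F,F')$, viewed as an abstract group acting on $T_d$, and set $(F')^+ := \langle F'_\omega : \omega \in \Omega\rangle$, so that ``$F'$ is generated by its point stabilisers'' means $F' = (F')^+$. First I would identify the relevant subgroups: the subgroup $G(F,F')^+$ generated by edge-fixators coincides with the group $\envA$ generated by fixators of half-trees (Proposition \ref{prop-env-A-tree}). Indeed a half-tree fixator fixes both endpoints of the defining edge, hence fixes that edge and lies in $G(F,F')^+$, while Remark \ref{rmq-indep} writes every edge-fixator as a product of two half-tree fixators. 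The $G$-action on $\partial T_d$ is faithful, minimal and extremely proximal (Proposition \ref{prop-tree-ep}), and it fails to be topologically free exactly when $F'$ has a non-trivial point stabiliser; I would assume this (the semi-regular case, where $G(F,F')^+$ is trivial, is degenerate and one checks directly that $G$ is then not virtually simple, the right-hand side failing as well, since a nontrivial semi-regular $F'$ is not generated by its trivial point stabilisers). Then $(G,\partial T_d)$ satisfies (EP), and statement \ref{item-Gamma-virt-s} of Proposition \ref{prop-core-deriv0} (see Remark \ref{rmq-virt-spl}) gives that $G$ is virtually simple if and only if $G(F,F')^+$ has finite index in $G$ and finite abelianization.

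The heart of the argument is the computation of the quotient by $G(F,F')^+$. Since $F'$ stabilises each $F$-orbit setwise, every element of $G$ preserves the partition of $E_d$ into the $k = |\Omega/F|$ classes of edges whose colour lies in a fixed $F$-orbit. Passing to the type-preserving subgroup $\Gast$ of index $2$ (which contains $G(F,F')^+$, edge-fixators being type-preserving), I expect the quotient graph $\Gast \backslash T_d$ to be two vertices joined by $k$ edges, so its fundamental group is free of rank $k-1$. A vertex-stabiliser surjects onto $F'$ via its local action, and the fixators of the incident edges generate the preimage of $(F')^+$; hence, trivialising all edge-fixators in the Bass--Serre presentation should yield
\[ \Gast \,/\, G(F,F')^+ \;\cong\; \mathbb{F}_{k-1} \ast (F'/(F')^+) \ast (F'/(F')^+). \]
This group is finite if and only if $k=1$ and $F' = (F')^+$, that is, if and only if $F$ is transitive and $F'$ is generated by its point stabilisers; otherwise it contains a non-trivial free product and is infinite. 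I expect this Bass--Serre bookkeeping --- pinning down the quotient graph and verifying the vertex-group-modulo-edge-group contributions --- to be the main technical obstacle.

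It then remains to assemble the two directions. If $F$ is transitive and $F'$ is generated by its point stabilisers, the displayed quotient is trivial, so $[G : G(F,F')^+] = 2$; moreover $G(F,F')^+$ is non-trivial, and by Tits' simplicity theorem \cite{Tits70} (the independence property being supplied by Remark \ref{rmq-indep}, and the action being minimal and of general type) it is simple, hence perfect with trivial abelianization. Thus both conditions of Proposition \ref{prop-core-deriv0}\,\ref{item-Gamma-virt-s} hold and $G$ is virtually simple. Conversely, if either condition fails then $k \geq 2$ or $F' \neq (F')^+$, so the displayed quotient, and hence $G(F,F')^+$ has infinite index in $G$; by Proposition \ref{prop-core-deriv0}\,\ref{item-Gamma-virt-s} the group $G$ is not virtually simple. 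This establishes the equivalence.
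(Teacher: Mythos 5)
Your argument is correct, and it takes a genuinely different route from the paper's. The paper disposes of both directions by citation: necessity and the fact that $[G(F,F'):G(F,F')^+]=2$ under the hypotheses are quoted from \cite[Prop.\ 4.7]{LB-ae}, simplicity and openness of the monolith from \cite[Cor.\ 4.9]{LB-ae}, and the only work done in-house is the identification $\mon(\Gamma)=[G(F,F')^+,G(F,F')^+]$ via Remark \ref{rmq-indep} and Proposition \ref{prop-core-deriv0}, followed by the observation that the abelianization of the compactly generated group $G(F,F')^+$ is finitely generated and generated by torsion (images of edge-fixators), hence finite. You share the same backbone (Remark \ref{rmq-indep} identifying $G(F,F')^+$ with the half-tree-fixator subgroup, and Proposition \ref{prop-core-deriv0}\,\ref{item-Gamma-virt-s} as the virtual-simplicity criterion), but you replace both citations by self-contained arguments: a Bass--Serre computation of $\Gast/G(F,F')^+$, and Tits' simplicity theorem for the finite-abelianization step. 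Your displayed formula $\Gast/G(F,F')^+\cong \mathbb{F}_{k-1}\ast D\ast D$ with $D=F'/(F')^+$ is correct and in fact agrees with the paper's own later Proposition \ref{prop-explicit-G/G^+}, which identifies this quotient with $U(D)^\ast$ acting on $T_r$ ($r$ the number of $(F')^+$-orbits): since $D$ is semi-regular on those $r$ orbits with $k$ orbits of its own ($F$-orbits and $F'$-orbits coincide because $F'$ preserves the $F$-orbits), Bass--Serre gives $U(D)^\ast\cong D\ast D\ast\mathbb{F}_{k-1}$, matching your formula in all the paper's sample cases. The only points you should nail down in the ``bookkeeping'' you flag are that the local action map $G_v\to F'$ is onto with kernel inside the group generated by the incident edge-fixators, and that those edge-fixators map onto the point stabilizers $F'_a$ --- both are \cite[Lem.\ 3.3--3.4]{LB-ae} and are implicitly used in the paper's proof of Proposition \ref{prop-explicit-G/G^+} as well. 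What your route buys is independence from \cite{LB-ae} and a uniform treatment of both directions; what the paper's buys is brevity and avoidance of any graph-of-groups presentation for the (generally non-discrete) group $G(F,F')$.
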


\begin{proof}
These conditions are necessary by \cite[Prop.\ 4.7]{LB-ae}. Conversely, assume $F$ transitive and $F'$ generated by its point stabilizers. By \cite[Prop.\ 4.7]{LB-ae} again, $G(F,F')^+$ has index two in $G(F,F')$, so in particular it is compactly generated. If $M$ is the monolith of $G(F,F')$, which is simple and open in $G(F,F')$ by \cite[Cor.\ 4.9]{LB-ae}, we have to show $M$ has finite index. According to Remark \ref{rmq-indep}, $G(F,F')^+$ is also the subgroup generated by fixators of half-trees, and therefore by Proposition \ref{prop-core-deriv0} $M$ is the commutator subgroup of $G(F,F')^+$. The abelianization of $G(F,F')^+$ is therefore a finitely generated abelian group, which is generated by torsion elements since $G(F,F')^+$ is generated by locally elliptic subgroups (fixators of edges). Therefore this abelianization is finite, and it follows that $M$ has finite index in $G(F,F')$.
\end{proof}

\subsection{Boundaries of $G(F,F')$} \label{subsec-bnd-g(f,f')}

In this paragraph we use results from the previous sections in order to study the boundaries of the discrete groups $G(F,F')$. The following result shows that several properties of the set of boundaries are governed by the permutation groups, and that rigidity phenomena occur under mild conditions of the permutation groups.

\begin{thm} \label{thm-boundaries-G(F,F')}
Assume that $F$ is semi-regular, $F \neq F'$, and write $\Gamma = G(F,F')$. The following are equivalent: 
\begin{enumerate}[label=(\roman*)]
\item \label{item-for-F'} The subgroup of $F'$ generated by its point stabilizers has at most two orbits in $\Omega$.
\item \label{item-env-egal} $\Gamma / \envA$ is isomorphic to one of $C_2$, $D_\infty$ or $D_\infty \rtimes C_2$.
\item \label{item-quo-env-moy} $\envA$ is co-amenable in $\Gamma$.
\item \label{item-same-A} $\mathcal{S}_\Gamma(X) = \mathcal{A}_\Gamma$ for every non-trivial $\Gamma$-boundary $X$.
\item \label{item-bnd-dsjt} $\Gamma$ is boundary indivisible.
\end{enumerate}
\end{thm}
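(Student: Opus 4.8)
The plan is to work entirely with $E := \envA$, which by Proposition~\ref{prop-env-A-tree} together with Remark~\ref{rmq-indep} is the normal subgroup $G(F,F')^+$ generated by the fixators of half-trees, and to reduce all five conditions to the structure of the quotient $\Gamma/E$. First I would record the geometric input: by Proposition~\ref{prop-tree-ep} the $\Gamma$-action on $\partial T_d$ is minimal and extremely proximal, hence strongly proximal (Theorem~\ref{thm-ep-sp}); since end-stabilizers are amenable, Proposition~\ref{prop-env-A-tree} gives $\A_\Gamma=\mathcal{S}_\Gamma(\partial T_d)$, so $\A_\Gamma$ comes from a faithful extremely proximal action and $E$ is exactly its envelope. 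As $E\trianglelefteq\Gamma$, condition \ref{item-quo-env-moy} is literally the amenability of $\Gamma/E$. I would then run a hub-and-spoke argument around the combinatorial identification of $\Gamma/E$: prove \ref{item-for-F'}$\Rightarrow$\ref{item-env-egal}$\Rightarrow$\ref{item-quo-env-moy}, deduce \ref{item-quo-env-moy}$\Rightarrow$\ref{item-same-A} and \ref{item-quo-env-moy}$\Rightarrow$\ref{item-bnd-dsjt} from the URS machinery, and close the loop by the contrapositives $\neg$\ref{item-for-F'}$\Rightarrow\neg$\ref{item-same-A} and $\neg$\ref{item-for-F'}$\Rightarrow\neg$\ref{item-bnd-dsjt}.

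The combinatorial core is to compute $\Gamma/E$ from $F'^+:=\langle F'_\omega:\omega\in\Omega\rangle\trianglelefteq F'$ and its orbits on $\Omega$. Since point-stabilizers of $F'$ lie in $F'^+$, the group $F'/F'^+$ acts freely on the set $\overline\Omega$ of $F'^+$-orbits, and the number of these orbits is the invariant governing \ref{item-for-F'}. I would show $\Gamma/E$ is assembled from this local data, matching the three cases: one orbit corresponds to $F$ transitive and $F'$ generated by point-stabilizers, where Proposition~\ref{prop-g-virt-simpl} (via Proposition~\ref{prop-core-deriv0}) gives $[\Gamma:E]=2$, i.e.\ $\Gamma/E\cong C_2$; two orbits produce a surviving translation ($\mathbb{Z}$) direction along the tree, giving $D_\infty$ or $D_\infty\rtimes C_2$ according to whether the bipartition-swap is present; and three or more orbits force $\Gamma/E$ to be non-amenable, in fact to admit a quotient that is a product of two non-amenable groups (this is the regime of the non-abelian free quotients of Proposition~\ref{prop-explicit-G/G^+}). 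This establishes \ref{item-for-F'}$\Leftrightarrow$\ref{item-env-egal} and the non-amenability half of $\neg$\ref{item-for-F'}, while \ref{item-env-egal}$\Rightarrow$\ref{item-quo-env-moy} is immediate since $C_2$, $D_\infty$ and $D_\infty\rtimes C_2$ are amenable.

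For the URS spokes, note that once \ref{item-quo-env-moy} holds, $\Gamma$ is non-amenable while $\Gamma/E$ is amenable, so $E\neq 1$ and $\A_\Gamma$ is non-trivial (continuous); part~\ref{item-prop-A-EP-2} of Proposition~\ref{prop-A_G-EP} then applies verbatim and yields simultaneously that every non-trivial $\Gamma$-boundary has stabilizer URS equal to $\A_\Gamma$ (that is \ref{item-same-A}) and that $\Gamma$ is boundary indivisible (that is \ref{item-bnd-dsjt}). To close the loop I argue from $\neg$\ref{item-for-F'}, where $\Gamma/E$ is non-amenable. For $\neg$\ref{item-bnd-dsjt}: whenever $\Gamma/E$ surjects onto a product $Q_1\times Q_2$ of two non-amenable groups, the boundary $\partial_{sp}Q_1\times\partial_{sp}Q_2=\partial_{sp}(Q_1\times Q_2)$ pulls back to a $\Gamma$-boundary, exhibiting $\partial_{sp}Q_1$ and $\partial_{sp}Q_2$ as disjoint non-trivial $\Gamma$-boundaries. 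When $E\neq 1$ there is a cleaner route: take $Y$ the pullback of $\partial_{sp}(\Gamma/E)$ and $X=\partial T_d$; then $E$ acts trivially on $Y$ and minimally on $X$ by Lemma~\ref{lem-env-act-min-ep}, so every point-stabilizer in $Y$ acts minimally on $X$ and Lemma~\ref{lem-triv-disj} makes $X,Y$ disjoint. For $\neg$\ref{item-same-A}, the same quotients furnish a non-trivial boundary whose stabilizer URS strictly dominates $\A_\Gamma$ (using $E\neq1$), or, in the degenerate case $E=1$, a non-$C^*$-simple non-amenable quotient supplies a non-topologically-free boundary with $\mathcal{S}_\Gamma(X)\neq\A_\Gamma$.

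The hard part will be the combinatorial heart: pinning down $G(F,F')/G(F,F')^+$ as an explicit group from the orbit structure of $F'^+$, and proving that three or more orbits force not merely non-amenability but the presence of quotients onto products of non-amenable groups, i.e.\ enough independent boundary actions to defeat boundary indivisibility. The most delicate point is the degenerate case $F'^+=1$ (so $E=1$ and $\A_\Gamma$ trivial), where the clean URS construction through $\partial T_d$ collapses — $\partial T_d$ becomes a factor of $\partial_{sp}\Gamma$ rather than disjoint from it — and one must instead extract the disjoint boundaries, and the non-topologically-free boundary needed for $\neg$\ref{item-same-A}, directly from the free and product quotients produced by the combinatorial model.
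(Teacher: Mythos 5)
Your proposal is correct and follows essentially the same route as the paper: identify $\envA$ with $G(F,F')^+$ (Lemma \ref{lem-env-G(F,F')^+}), compute $\Gamma/G(F,F')^+$ from the orbit structure of $F'^+$ (this is exactly Proposition \ref{prop-explicit-G/G^+} and the trichotomy $r=1,2,\ge 3$ following it) to get \ref{item-for-F'}$\Rightarrow$\ref{item-env-egal}$\Rightarrow$\ref{item-quo-env-moy}, derive \ref{item-same-A} and \ref{item-bnd-dsjt} from Proposition \ref{prop-A_G-EP}, and refute \ref{item-bnd-dsjt} when $r\ge 3$ by pulling back a boundary from the virtually free quotient and showing it is disjoint from $\partial T_d$ via Lemma \ref{lem-triv-disj}; your \enquote{cleaner route} is precisely the paper's closing argument.

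Two remarks. First, the case you single out as the most delicate, $F'^+=1$ (hence $E=1$ and $\A_\Gamma$ trivial), is vacuous under the hypotheses: since $F$ is semi-regular and $F'$ preserves each $F$-orbit, a semi-regular $F'$ would satisfy $|F'|\le |F|$ on any orbit, forcing $F=F'$. Thus $F\ne F'$ guarantees a non-trivial point stabilizer in $F'$, so $F'^+\ne 1$, $G(F,F')^+\ne 1$, and $\A_\Gamma$ is genuinely continuous; the fallback you sketch for that case (extracting a non-topologically-free boundary from a \enquote{non-$C^\ast$-simple non-amenable quotient}) is the weakest and vaguest part of your plan, and it is never needed. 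Second, your claim that $r\ge 3$ forces $\Gamma/E$ to surject onto a product of two non-amenable groups is an unnecessary detour and needs more care than you allow (a virtually free group does not automatically surject onto such a product; e.g.\ $\mathbb{F}_2$ does not surject onto $\mathbb{F}_2\times\mathbb{F}_2$, so one would have to choose the two quotients with no common factor). The disjointness of $\partial T_d$ from any non-trivial boundary factoring through $\Gamma/E$ already defeats both \ref{item-same-A} and \ref{item-bnd-dsjt}, exactly as in the paper.
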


We will need preliminary results before proving Theorem \ref{thm-boundaries-G(F,F')}.

\begin{lem} \label{lem-env-G(F,F')^+}
Assume that $F$ is semi-regular. The envelope of the Furstenberg URS of $G(F,F')$ is equal to $G(F,F')^+$.
\end{lem}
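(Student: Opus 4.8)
The plan is to apply Proposition~\ref{prop-env-A-tree} to $\Gamma = G(F,F')$ and then to recognize the resulting group of half-tree fixators as $G(F,F')^+$.

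First I would verify the hypotheses of Proposition~\ref{prop-env-A-tree}. Since $F$ is semi-regular the group $\Gamma$ is discrete, hence countable (indeed finitely generated), and its action on $T_d$ is minimal and of general type (this is among the standard properties of these groups recalled in \S\ref{subsec-background-g(f,f')}; it is in particular what makes the boundary action extremely proximal, see \cite{LB-ae}). Moreover end-stabilizers in $\Gamma$ are (locally finite)-by-cyclic, hence amenable, as recalled above. Proposition~\ref{prop-env-A-tree} therefore applies and tells us both that $\A_\Gamma = \mathcal{S}_\Gamma(\partial T_d)$ and that $\envA$ is exactly the subgroup of $\Gamma$ generated by the fixators of half-trees.

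It then remains to identify this subgroup with $G(F,F')^+$, which by definition is generated by fixators of edges. On the one hand, an automorphism fixing pointwise a half-tree $H$ bounded by an edge $e$ also fixes $e$: it fixes the endpoint $v$ of $e$ lying in $H$ together with all but one of the neighbours of $v$, whence it fixes the last neighbour and the edge $e$ as well. Thus every half-tree fixator is an edge fixator, and the group generated by half-tree fixators is contained in $G(F,F')^+$. On the other hand, by Remark~\ref{rmq-indep} every element fixing an edge $e$ factors as a product of two elements, each fixing one of the two half-trees determined by $e$; hence $G(F,F')^+$ is contained in, and therefore equal to, the group generated by half-tree fixators (this identification was already used in the proof of Proposition~\ref{prop-g-virt-simpl}). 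Combining the two steps gives $\envA = G(F,F')^+$. The only point requiring (routine) care is the verification that the $\Gamma$-action on $T_d$ is minimal and of general type, which is precisely what licenses the use of Proposition~\ref{prop-env-A-tree}; everything else is formal.
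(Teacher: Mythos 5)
Your proof is correct and follows essentially the same route as the paper: apply Proposition \ref{prop-env-A-tree} to identify $\envA$ with the subgroup generated by fixators of half-trees, then use Remark \ref{rmq-indep} for the inclusion $G(F,F')^+ \leq \envA$ and the (easy) observation that a half-tree fixator fixes the bounding edge for the reverse inclusion. The paper's proof is just a more compressed version of exactly this argument.
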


\begin{proof}
Write $\Gamma = G(F,F')$ and $\Gamma^+ = G(F,F')^+$. According to Proposition \ref{prop-env-A-tree}, $\envA$ is the subgroup generated by fixators of half-trees in $\Gamma$. Therefore the inclusion $\envA \leq \Gamma^+$ is clear. The converse inclusion also holds true by Remark \ref{rmq-indep}, so equality follows.
\end{proof}

In view of Lemma \ref{lem-env-G(F,F')^+} and Proposition \ref{prop-A_G-EP}, we are led to consider the quotient $G(F,F') / G(F,F')^+$, and in particular study when it is amenable. To this end, we will denote by $F'^+$ the subgroup of $F'$ generated by its point stabilizers, and write $D = F'/F'^+$. Since $F'^+$ is normal in $F'$, we have an action of $F'$ on the set of orbits of $F'^+$, which factors through a free action of $D$.

\begin{prop} \label{prop-explicit-G/G^+}
The group $Q = G(F,F')^\ast / G(F,F')^+$ is isomorphic to the group $U(D)^\ast$, where $D = F'/F'^+$ is viewed as a semi-regular permutation group on the set of orbits of $F'^+$ in $\Omega$. If moreover $F$ is transitive, one has $Q = D \ast D$.
\end{prop}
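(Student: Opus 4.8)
The plan is to produce the isomorphism explicitly by reducing local permutations modulo $F'^+$. Write $\Gamma = G(F,F')$, $\Gamma^+ = G(F,F')^+$ and $\Gamma^\ast = G(F,F')^\ast$, and recall from Lemma \ref{lem-env-G(F,F')^+} together with Remark \ref{rmq-indep} that $\Gamma^+$ is exactly the subgroup generated by fixators of half-trees. First I would record two elementary facts about the permutation groups. Since every point stabilizer of $F'$ lies in $F'^+$, the quotient $D = F'/F'^+$ acts freely on the set $Y = \Omega/F'^+$ of $F'^+$-orbits; hence $D$ is semi-regular, and the universal group $U(D)$ (acting on the $|Y|$-regular tree $T_{d'}$ properly coloured by $Y$) is discrete with the rigidity property that any of its elements is determined by the image of a single vertex together with the local permutation there. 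Second, because $F'^+ \trianglelefteq F'$, each $\sigma(g,v) \in F'$ permutes the $F'^+$-orbits and so induces $\bar\sigma(g,v) \in D$, and the cocycle relation $\sigma(gh,v) = \sigma(g,hv)\sigma(h,v)$ descends to $\bar\sigma(gh,v) = \bar\sigma(g,hv)\bar\sigma(h,v)$.

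Next I would assemble a homomorphism $\Phi : \Gamma^\ast \to U(D)^\ast$ from this data. The main technical obstacle is that there is no simplicial map $T_d \to T_{d'}$ (a single edge of $T_{d'}$ corresponds to a whole $F'^+$-orbit of colours in $T_d$), so $\Phi(g)$ must be built by hand: from $\bar\sigma(g,\cdot)$ and the cocycle relation one produces a coherent system of partial maps on the balls of $T_{d'}$, and the rigidity of the discrete group $U(D)$ guarantees that these glue to a unique automorphism $\Phi(g)$ with $\sigma(\Phi(g),\cdot) = \bar\sigma(g,\cdot)$; functoriality of the gluing makes $\Phi$ a homomorphism, landing in $U(D)^\ast$ since $\Gamma^\ast$ preserves the bipartition. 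I would then show $\ker\Phi = \Gamma^+$. The inclusion $\Gamma^+ \subseteq \ker\Phi$ is easy: a fixator of a half-tree has trivial local permutations on the fixed side and fixes the corresponding edge of $T_{d'}$ pointwise, and by semi-regularity of $D$ an element of $U(D)$ fixing a single edge of $T_{d'}$ is trivial. The reverse inclusion $\ker\Phi \subseteq \Gamma^+$ is the heart of the argument: given $g$ with $\Phi(g)=1$, I would use the independence encoded in Remark \ref{rmq-indep} to factor $g$, working outward level by level on $T_d$, as a product of elements each fixing a half-tree, thereby exhibiting $g \in \Gamma^+$.

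For surjectivity, this is where transitivity of $F$ enters: when $F$ is transitive one has $F'^+ F = F'$ (fixing $\omega$, any $f' \in F'$ agrees with a suitable $f \in F$ on $\omega$, so $f^{-1}f' \in (F')_\omega \le F'^+$), hence the reduction $F \to D$ is onto; by germ-rigidity of semi-regular universal groups the restriction $\Phi|_{U(F)} : U(F) \to U(D)$ is then surjective, and since $U(F) \le \Gamma$ we conclude $\Phi(\Gamma^\ast) = U(D)^\ast$, giving $\Gamma^\ast/\Gamma^+ \cong U(D)^\ast$ by the first isomorphism theorem. Finally, to obtain the explicit free product, note that transitivity of $F$ forces $F'$, and hence $D$, to be transitive on $Y$; being also semi-regular, $D$ is then regular. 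For a regular $D$ the group $U(D)$ has trivial edge-fixators (semi-regularity) and vertex stabilizers isomorphic to $D$ (the elements of constant local permutation), and it is edge-transitive, so its quotient graph is a single segment; Bass–Serre theory then yields $U(D)^\ast \cong D \ast_{\{1\}} D = D \ast D$, completing the proof.
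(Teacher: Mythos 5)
Your overall strategy --- reduce local permutations modulo $F'^+$ to obtain a homomorphism onto $U(D)$ with kernel $G(F,F')^+$ --- is exactly the paper's, but two essential steps are missing. First, the construction of $\Phi$ is not actually carried out. Your premise that \enquote{there is no simplicial map $T_d \to T_{d'}$} is wrong: the paper builds precisely such a map, by realizing $T_d$ as the Cayley graph of the free Coxeter group $\ast_d C_2$ and $T_r$ as the Cayley graph of the quotient obtained by identifying generators whose colours lie in the same $F'^+$-orbit; this yields a surjective (non-locally-injective) graph morphism $p: T_d \to T_r$, and one sets $\varphi(g)(x) := p(g\tilde x)$. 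Your alternative --- gluing \enquote{partial maps on balls} out of the data $\bar\sigma(g,\cdot)$ using rigidity of $U(D)$ --- cannot work as stated, because reduced local permutations alone never tell you where any vertex of $T_{d'}$ is sent; the position information is exactly what $p$ carries, and its well-definedness is itself a nontrivial combinatorial lemma (fibers of $p$ are characterized by colour sequences reducing to concatenations of even palindromes). A related fact you skip, and which makes everything work, is that the class of $\sigma(g,v)$ in $D$ is independent of $v$ (local permutations at adjacent vertices differ by an element of a point stabilizer of $F'$, hence of $F'^+$).

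Second, and more seriously, your argument for $\ker\Phi \subseteq \Gamma^+$ would fail. You propose to apply Remark \ref{rmq-indep} and peel off half-tree fixators \enquote{level by level}, but that remark only applies to elements fixing an edge, while a kernel element need only preserve each fiber of $p$; these fibers are infinite (as soon as $F'^+\neq 1$), and the kernel contains elements acting hyperbolically on $T_d$ --- indeed $G(F,F')^+$ itself, being a nontrivial normal subgroup of a group acting minimally and of general type, contains hyperbolic elements. The paper's proof spends most of its effort exactly here: given $g$ in the kernel and a vertex $v$, the palindrome structure of the colour sequence from $v$ to $g(v)$ is used to build vertex-stabilizing correctors $g_1,\dots,g_n$ with local permutations in $F'^+$ that fold $g(v)$ back onto $v$, after which a separate lemma (an element fixing a vertex with local permutation there in $F'^+$ lies in $G(F,F')^+$) concludes. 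None of this appears in your sketch. Finally, you only establish surjectivity of $\Phi$ when $F$ is transitive, whereas the isomorphism $Q \cong U(D)^\ast$ is asserted for all $F$; the paper instead deduces surjectivity from vertex-transitivity of $G(F,F')$ on $T_r$ together with the fact that vertex stabilizers of $G(F,F')$ realize the full local group $F'$, hence all of $D$ (in fact the composite $F\to D$ is always onto under the standing assumption that $F'$ preserves the $F$-orbits, but this requires an argument you do not give). Your last step, $U(D)^\ast \cong D\ast D$ for transitive $F$, is fine and matches the paper.
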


\begin{proof}
We let $O_1,\ldots,O_r \subseteq \Omega$ be the orbits of $F'^+$, and we will freely identify the set of orbits with the integers $\left\{1,\ldots,r\right\}$. For every $a \in \Omega$, there is a unique $i \in \left\{1,\ldots,r\right\}$ such that $a \in O_{i}$, and we denote $i=i_a$.

We view the tree $T_d$ as the Cayley graph of the free Coxeter group of rank $d$, namely the group defined by generators $x_1,\ldots,x_d$ and relators $x_j^2 =1$ for all $j$. When adding relations of the form $x_a = x_b$ whenever $i_a = i_b$ (i.e.\ $a$ and $b$ are in the same $F'^+$-orbit), we obtain a free Coxeter group of rank $r$. It has a Cayley graph that is a regular tree of degree $r$, and we have a surjective map $p: T_d \rightarrow T_r$.

Two elements $v,w \in \ast_d C_2 = \left\langle x_1,\ldots,x_d \right\rangle $ have the same image in $\ast_r C_2$ if and only if one can write $w = v \prod w_j x_{a_j} x_{b_j} w_j^{-1}$ for some words $w_j$ and colors $a_j,b_j$ such that $i_{a_j} = i_{b_j}$. Since the inverse of $w_j$ is equal to the word obtained from $w_j$ by reversing the order, we have:

\begin{lem} \label{lem-same-fiber}
Two vertices $v,w$ of $T_d$ have the same projection in $T_r$ if and only if the distance between $v$ and $w$ is even, say $d(v,w) = 2m$, and if  $(a_1,\ldots,a_{2m})$ is the sequence of colors of the unique geodesic from $v$ to $w$, then the word $(i_{a_1},\ldots,i_{a_{2m}})$ is a concatenation of palindromes of even length.
\end{lem}

\begin{lem} \label{lem-local-perm-constant-modulo}
For every $g \in G(F,F')$ and every vertex $v$ on $T_d$, the image of $\sigma(g,v)$ in $D = F'/F'^+$ does not depend on $v$. We denote by $\sigma_g \in D$ the corresponding element, which is trivial when $g \in G(F,F')^+$.
\end{lem}

\begin{proof}
If $v,w$ are adjacent vertices and $a$ is the color of the edge between them, then $\sigma(g,v)(a) = \sigma(g,w)(a)$. So $\sigma(g,v) \sigma(g,w)^{-1} \in F'^+$. The first statement follows by connectedness. The fact that $\sigma_g$ is trivial on $G(F,F')^+$ is then clear because $g \mapsto \sigma_g$ is a morphism according to the first statement, which vanishes on fixators of edges.
\end{proof}

Note that the set of edges of $T_r$ inherits a natural coloring by the integers $1,\ldots,r$.

\begin{lem} \label{lem-action-new-tree}
There is a natural morphism $\varphi: G(F,F') \rightarrow \mathrm{Aut}(T_r)$ such that $\ker(\varphi) = G(F,F')^+$ and $\mathrm{Im}(\varphi) = U(D)$.
\end{lem}

\begin{proof}
We shall first define an action of $G(F,F')$ on the set of vertices of $T_r$. Let $g \in G(F,F')$. Let $v,w$ be two vertices of $T_d$, and $(a_1,\ldots,a_{n})$ be the sequence of colors from $v$ to $w$. If $v = v_1, \ldots, v_{n+1} = w$ are the vertices between $v$ and $w$, then the sequence of colors from $g(v)$ to $g(w)$ is $(\sigma(g,v_1)(a_1),\ldots,\sigma(g,v_n)(a_{n}))$. If $\sigma_g$ is the element defined in Lemma \ref{lem-local-perm-constant-modulo}, then one has $i_{\sigma(g,v_j)(a_j)} = \sigma_g(i_{a_j})$ for all $j=1,\ldots,n$. This shows in particular that if $v,w$ satisfy the condition of Lemma \ref{lem-same-fiber}, then the same holds for $g(v)$ and $g(w)$. This means that for every vertex $x$ of $T_r$, the formula \begin{equation} \label{eq:action-Tr} \varphi(g)(x) := p(g \tilde{x}), \end{equation} where $\tilde{x}$ is any vertex of $T_d$ such that $p(\tilde{x}) = x$, is a well-defined action of $G(F,F')$ on $T_r$. The fact that the tree structure is preserved is clear. Note that for every $g \in G(F,F')$, all the local permutations of $\varphi(g)$ are equal to $\sigma_g$: for every vertex $x$ of $T_r$, one has $\sigma(\varphi(g),x) = \sigma_g$. In particular the image of $\varphi$ lies inside $U(D)$.

We shall prove that $\ker(\varphi) = G(F,F')^+$. Let $g \in G(F,F')$ fixing an edge in $T_d$. Then $\varphi(g)$ also fixes an edge of $T_r$ by (\ref{eq:action-Tr}). Moreover one has $\sigma_g = 1$ (Lemma \ref{lem-local-perm-constant-modulo}), and it follows from the previous paragraph that all local permutations of $\varphi(g)$ are trivial. This implies $g \in \ker(\varphi)$. Conversely, we let $g$ be an element of $\ker(\varphi)$, and we prove that $g \in G(F,F')^+$. Note that since $\varphi(g)$ is trivial, one has $\sigma_g = 1$, i.e.\ all local permutations of $g$ are in $F'^+$. Now let $v$ be any vertex. By Lemma \ref{lem-same-fiber}, the sequence of colors $(a_1,\ldots,a_{2n})$ from $v$ to $g(v)$ gives rise to a sequence $(i_{a_1},\ldots,i_{a_{2n}})$ that is a concatenation of palindromes. For simplicity we treat the case where $(i_{a_1},\ldots,i_{a_{2n}})$ is a palindrome; the general case consists in repeating the argument for this case. Let $v = v_1, \ldots, v_{2n+1} = g(v)$ be the vertices between $v$ and $g(v)$ (note that $v_n$ is the midpoint between $v$ and $g(v)$). Since $(i_{a_1},\ldots,i_{a_{2n}})$ is a palindrome, one easily checks that there are elements $g_1, \ldots, g_n$ such that $g_j$ belongs to the stabilizer of $v_j$ in $G(F,F'^+)$ and $g' = g_1 \ldots g_n g$ fixes the vertex $v$ (this is obtained by successively folding the geodesic $[v,g(v)]$ onto itself starting from its midpoint in order to bring back $g(v)$ to $v$ with $g_1 \ldots g_n$). We now invoke the following easy fact, whose verification is left to the reader.

\begin{lem} \label{lem-plus-inversion}
Let $\gamma \in G(F,F')$ fixing a vertex $w$, and such that $\sigma(\gamma,w) \in F'^+$. Then $\gamma \in G(F,F')^+$.
\end{lem}

We apply Lemma \ref{lem-plus-inversion} to $g'$ and all $g_j$'s, and deduce that $g  = g_n^{-1} \ldots g_1^{-1} g'$ belongs to $G(F,F')^+$ as desired.

The last thing that remains to be proved in the statement of Lemma \ref{lem-action-new-tree} is that the image of $\varphi$ is equal to $U(D)$. The fact that $\varphi(g)$ always belongs to $U(D)$ has already been observed. For the converse inclusion, observe that since $G(F,F')$ acts transitively on the vertices of $T_r$ (as it is already the case on $T_d$), it is enough to check that the image of $\varphi$ contains $U(D)_x$ for some vertex $x$ of $T_r$. Now since $D$ acts freely on $\left\{1,\ldots,r\right\}$, the map $U(D)_x \rightarrow D$, $\gamma \mapsto \sigma(\gamma,x)$, is an isomorphism. Therefore it is enough to see that any action on the star around $x$ on $T_r$ can realized by an element of $G(F,F')$, and this is indeed the case (see e.g.\ Lemma 3.4 in \cite{LB-ae}).
\end{proof}

To finish the proof of the proposition, remark that the image of $G(F,F')^\ast$ by $\varphi$ is precisely $U(D)^\ast$. When $F$ is transitive then $D$ is also transitive, so that $U(D)^\ast$ has two orbits of vertices and one orbit of edges, and therefore splits as the free product $D \ast D$.
\end{proof}

\begin{rmq}
The case $F=F'$ is allowed in Proposition \ref{prop-explicit-G/G^+}, so that the conclusion also holds for the groups $U(F)$ from \cite{BM-IHES}.
\end{rmq}

Proposition \ref{prop-explicit-G/G^+} naturally leads us to isolate the following three situations. We keep the previous notation, so that $r$ is the number of orbits of $F'^+$ in $\Omega$, $D = F' / F'^+$ and $Q = G(F,F')^\ast / G(F,F')^+$:

\begin{enumerate}[leftmargin = 0.7cm]
\item $r=1$. In this case $T_r$ is a segment of length one, and $D$ and $Q$ are trivial.
\item \label{item-r=2} $r=2$. $T_r$ is a bi-infinite line, and this case splits into two disjoint sub-cases:
\begin{enumerate}[label=(\alph*)]
\item \label{item-F-intrans} If $F'$ is intransitive then $D$ is trivial, and $Q = U(1)^\ast = \mathbb{Z}$ (generated by a translation of $T_r$ of length $2$).
\item \label{item-F-trans} If $F'$ is transitive then we have $D=\mathrm{Sym}(2)$ and $Q = D \ast D = D_\infty$.
\end{enumerate}
\item $r \geq 3$. Then $Q = U(D)^\ast$ is a virtually free group (since it acts vertex transitively and with trivial edge stabilizers of $T_r$).
\end{enumerate}

Theorem \ref{thm-boundaries-G(F,F')} says that all properties stated there hold true if and only if $r \in \left\{1,2\right\}$. A sufficient condition for having $r=1$ is for instance that $F$ acts transitively on $\Omega$, $F \neq F'$ and $F'$ acts primitively, or quasi-primitively on $\Omega$. Recall that a permutation group is quasi-primitive if every non-trivial normal subgroup acts transitively.

But Theorem \ref{thm-boundaries-G(F,F')} also applies beyond the case of quasi-primitive permutation groups. For example a situation giving rise to case (\ref{item-r=2}) \ref{item-F-intrans} is when $F'$ has a fixed point and acts transitively on the complement. Examples giving rise to case (\ref{item-r=2}) \ref{item-F-trans} are for instance obtained by taking $F' = \mathrm{Sym}(n) \wr C_2 = \mathrm{Sym}(n) \wr \left\langle \tau \right\rangle$ acting naturally on $2n$ letters, and $F$ the subgroup generated by $((c_n,c_n),1)$ and $((1,1),\tau)$, where $c_n$ is a cycle of order $n$.

\begin{proof}[Proof of Theorem \ref{thm-boundaries-G(F,F')}]
\ref{item-for-F'} $\Rightarrow$ \ref{item-env-egal} follows from Lemma \ref{lem-env-G(F,F')^+} and Proposition \ref{prop-explicit-G/G^+} (and the discussion following its proof). \ref{item-env-egal} $\Rightarrow$ \ref{item-quo-env-moy} is clear. \ref{item-quo-env-moy} $\Rightarrow$ \ref{item-same-A} is Proposition \ref{prop-A_G-EP}. \ref{item-same-A} $\Rightarrow$ \ref{item-bnd-dsjt} is guaranteed by Proposition \ref{prop-same-urs} and the fact that $\mathcal{A}_\Gamma$ is not a point. Finally assume that \ref{item-for-F'} does not hold, i.e.\ $F'^+$ has at least three orbits in $\Omega$, and write $\Gamma^+ = G(F,F')^+$. By Proposition \ref{prop-explicit-G/G^+}, the group $Q = \Gamma / \Gamma^+$ has a subgroup of finite index that is free of rank at least 2. So there exist non-trivial $Q$-boundaries, and a fortiori these are non-trivial $\Gamma$-boundaries. If $X$ is such a boundary, then $\Gamma^+$ acts trivially on $X$. Since $\Gamma^+$ also acts minimally on $\partial T_d$, it follows that $X$ and $\partial T_d$ are disjoint $\Gamma$-boundaries, contradicting \ref{item-bnd-dsjt}. Therefore property \ref{item-bnd-dsjt} implies property \ref{item-for-F'}, and the proof is complete.
\end{proof}

\subsection{Weakly co-amenable subgroups} \label{subsec-coam-g(f,f')}

In this paragraph we show that subgroups of the groups $G(F,F')$ satisfy the following dichotomy:

\begin{prop} \label{prop-am-wcoam}
Assume that $F$ is regular and $F'$ is primitive. Then any subgroup of $G(F,F')$ is either (locally finite)-by-cyclic (and hence amenable) or weakly co-amenable.
\end{prop}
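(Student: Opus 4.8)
The plan is to split subgroups according to amenability and to exploit that the two alternatives are mutually exclusive: since $\Gamma=G(F,F')$ is non-amenable, a subgroup that is simultaneously amenable and weakly co-amenable would force $\Gamma$ to be amenable by Proposition \ref{prop-wco-elem}\ref{item-am-wc}. Hence it suffices to prove that every amenable subgroup is (locally finite)-by-cyclic and that every non-amenable subgroup is weakly co-amenable. First I would record the consequences of the hypotheses. As $F$ is regular it is semi-regular, so $\Gamma$ is a finitely generated discrete group in which vertex stabilizers are locally finite and end stabilizers are (locally finite)-by-cyclic; moreover the $\Gamma$-action on $\partial T_d$ is minimal and extremely proximal (Proposition \ref{prop-tree-ep}) and, end stabilizers being non-trivial, is not topologically free. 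Since $F'$ is primitive and non-regular (it properly contains the transitive regular group $F$), its point stabilizers are non-trivial, so $F'^+$ is a non-trivial normal subgroup of the primitive group $F'$ and is therefore transitive. Thus we are in the case of a single $F'^+$-orbit, and Theorem \ref{thm-boundaries-G(F,F')} applies: $\Gamma$ is boundary indivisible and every non-trivial $\Gamma$-boundary $X$ satisfies $\mathcal{S}_\Gamma(X)=\A_\Gamma$, which is amenable.

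For the amenable subgroups I would invoke the structure theory of groups acting on trees. A subgroup $L\le\Gamma$ that contains no non-abelian free group cannot be of general type, hence is elementary: it fixes a vertex of $T_d$, fixes an end, or stabilizes a pair of ends. In the first two cases $L$ lies in a vertex stabilizer or an end stabilizer and is therefore (locally finite)-by-cyclic, a property inherited by subgroups. In the last case $L$ stabilizes a line, and its subgroup of index at most two fixing both ends lies in an end stabilizer, yielding the same conclusion. Conversely a non-amenable $L$ must be of general type, since every elementary subgroup of $\Gamma$ is amenable. So for subgroups of $\Gamma$ the properties \emph{amenable} and \emph{elementary} coincide, and the amenable ones are exactly those of the asserted algebraic form.

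For the non-amenable subgroups I would prove weak co-amenability through the measure-theoretic reformulation of Proposition \ref{prop-reform-w-coam}: it is enough to show that a non-amenable $L$ fixes no probability measure on any non-trivial $\Gamma$-boundary $X$. The decisive input is that the $\Gamma$-action on any such $X$ is topologically amenable, because its stabilizer URS $\mathcal{S}_\Gamma(X)=\A_\Gamma$ is amenable (here the identification of all boundary stabilizer URS's obtained in the first step is essential). If $L$ fixed a probability measure $\nu$ on $X$, then averaging an almost-equivariant family of maps $X\to\prob(\Gamma)$ against the $L$-invariant measure $\nu$ produces an almost-invariant mean on $\Gamma$ for $L$, forcing $L$ to be amenable — a contradiction. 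Taking the contrapositive, every non-amenable $L$ is weakly co-amenable, which together with the previous paragraph establishes the dichotomy.

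The main obstacle is exactly this last transfer. On the single boundary $\partial T_d$ one verifies directly, via the north–south dynamics of hyperbolic tree elements (a general-type $L$ contains hyperbolic elements with disjoint endpoint pairs, and an invariant measure would have to be supported on each such pair), that a general-type subgroup fixes no probability measure; but weak co-amenability demands ruling out invariant measures on \emph{every} non-trivial boundary at once, and an arbitrary $\Gamma$-boundary is neither a factor of $\partial T_d$ nor dominates it in a way that lets one push measures between them. The clean way past this is precisely the topological amenability of the boundary actions furnished by the amenable URS $\A_\Gamma$, which upgrades the statement "no invariant measure on $\partial T_d$" to "the measure-fixing subgroups of every boundary are amenable"; getting this upgrade is where the real work of the proof lies.
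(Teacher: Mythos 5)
Your reduction to showing that every non-amenable (equivalently, general-type) subgroup $L$ fixes no probability measure on any non-trivial $\Gamma$-boundary is exactly the paper's starting point (via Proposition \ref{prop-reform-w-coam} and Theorem \ref{thm-boundaries-G(F,F')}), and your treatment of the amenable half via the elliptic/parabolic/lineal classification is fine. But the step you yourself identify as "where the real work lies" contains a genuine gap: the implication \enquote{$\mathcal{S}_\Gamma(X)=\A_\Gamma$ is amenable, hence the $\Gamma$-action on $X$ is topologically amenable} is false in general. Amenability of point stabilizers is a consequence of topological amenability, not the other way around (the action of any group on its Furstenberg boundary has amenable stabilizer URS, yet it is topologically amenable only when the group is exact). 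Even the weaker statement your averaging argument would need --- that a subgroup preserving a measure supported on points with amenable stabilizers is amenable --- fails in general (a non-abelian free group acting freely and measure-preservingly on a profinite completion). So the contradiction you aim for is not reached, and no correct substitute is supplied. A telltale sign is that your argument only uses primitivity of $F'$ through the weak consequence that $F'^+$ is transitive; the actual proof needs primitivity in a much stronger way.

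What the paper does instead is purely dynamical and stays inside Chabauty space. Given an $L$-invariant $\mu$ on $X$, it uses Proposition \ref{prop-GW-1.2} to lift along the almost 1-1 extension $\eta\colon\tilde X\to X$ and push forward along $\pi\colon\tilde X\to\A_\Gamma$, obtaining a closed $L$-invariant subset $R\subseteq\prob(\A_\Gamma)$. Strong proximality of the general-type $L$-action on $\partial T_d$ (of which $\A_\Gamma$ is a factor) forces $R$ to contain Dirac masses; since $L$ fixes no point of $\A_\Gamma$, one gets two \emph{distinct} $H,H'\in\A_\Gamma$ with $\delta_H,\delta_{H'}\in R$, and upper semi-continuity of stabilizers shows $\mu$ is supported on the common fixed points of $H$ and $H'$. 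The punchline is Lemma \ref{lem-prim-2urs-all}: primitivity of $F'$ guarantees $\langle H,H'\rangle=G(F,F')^\ast$, of index two in $\Gamma$, so every point of $\operatorname{supp}\mu$ has $\Gamma$-orbit of size at most two --- contradicting minimality of the non-trivial boundary $X$. If you want to rescue your route, you would have to actually \emph{prove} topological amenability of the action on $\A_\Gamma$ (e.g.\ by a Brown--Ozawa-type argument for tree actions with amenable vertex stabilizers) and then transfer it through the almost 1-1 extension; that is a substantial independent argument, not a formal consequence of the URS being amenable.
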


We will need the following lemma.

\begin{lem} \label{lem-prim-2urs-all}
Assume that $F'$ acts primitively on $\Omega$, and take two subgroups $H_1 \neq H_2$ in the Furstenberg URS of $G(F,F')$. Then $\left\langle H_1, H_2\right\rangle = G(F,F')^*$.
\end{lem}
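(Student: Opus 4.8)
The plan is to reduce the statement to a concrete computation about end-fixators and then carry out that computation by exploiting primitivity of $F'$ at the level of local actions. First I would identify the members of $\A_\Gamma$. By Proposition \ref{prop-env-A-tree}, $\A_\Gamma = \mathcal{S}_\Gamma(\partial T_d)$, so $\A_\Gamma$ is the closure in $\sub(\Gamma)$ of the subgroups $\Gamma_\xi^0$, where $\Gamma_\xi^0$ is the group of elements fixing a half-tree pointing to $\xi$ (equivalently, fixing a neighbourhood of $\xi$ in $\partial T_d$). Since $F$ is regular, $\Gamma$ is discrete, and a Chabauty-limit argument shows $\Gamma_{\xi_i}^0 \to \Gamma_\xi^0$ whenever $\xi_i \to \xi$: the inclusion $\liminf \supseteq \Gamma_\xi^0$ holds because any element fixing a half-tree towards $\xi$ fixes one towards $\xi_i$ for $i$ large; conversely no hyperbolic element can survive in the limit (a hyperbolic element fixes no half-tree), which forces $\limsup \subseteq \Gamma_\xi^0$. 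Thus $\xi \mapsto \Gamma_\xi^0$ is a continuous bijection onto $\A_\Gamma$ (this is also contained in the description of $\A_\Gamma$ in \cite{LBMB}); in particular each $H\in\A_\Gamma$ equals $\Gamma_{\xi(H)}^0$ for a unique end, and $H_1\neq H_2$ give distinct ends $\xi\neq\eta$. It therefore suffices to prove $\langle \Gamma_\xi^0,\Gamma_\eta^0\rangle = G(F,F')^\ast$ for $\xi\neq\eta$.

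Write $\Gamma^+ = G(F,F')^+$ and $K = \langle \Gamma_\xi^0, \Gamma_\eta^0\rangle$. One inclusion is immediate: $\Gamma_\xi^0,\Gamma_\eta^0 \subseteq \env(\A_\Gamma) = \Gamma^+$ by Lemma \ref{lem-env-G(F,F')^+}, so $K \leq \Gamma^+$; moreover $\Gamma^+ = G(F,F')^\ast$ here, since primitivity forces $F'^+ = F'$ (a single $F'^+$-orbit), whence the quotient $Q$ of Proposition \ref{prop-explicit-G/G^+} is trivial. For the reverse inclusion it is enough to show $\mathrm{Fix}(e)\subseteq K$ for every edge $e$, as these generate $\Gamma^+$ (Remark \ref{rmq-indep}). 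Let $L=(\ldots,w_{-1},w_0,w_1,\ldots)$ be the geodesic from $\eta$ to $\xi$. For an edge $e_k=\{w_k,w_{k+1}\}$ on $L$, Remark \ref{rmq-indep} writes any $\gamma\in\mathrm{Fix}(e_k)$ as $\gamma=\gamma_1\gamma_2$ with $\gamma_1$ fixing the $\eta$-side and $\gamma_2$ the $\xi$-side, so $\gamma_1\in\Gamma_\eta^0$, $\gamma_2\in\Gamma_\xi^0$ and $\mathrm{Fix}(e_k)\subseteq K$.

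The heart of the argument is to upgrade this to all edges by showing $K$ acts transitively on edges, for then $\mathrm{Fix}(e)=g\,\mathrm{Fix}(e_0)\,g^{-1}\subseteq K$ for suitable $g\in K$. Primitivity enters through the local action. At a vertex $w_k\in L$ the two incident edges of $L$ carry distinct colours $a,b$ (neighbouring edges are differently coloured). Elements of $\Gamma_\xi^0$ fixing the $\xi$-side of $e_k$ realize at $w_k$ every permutation of the point-stabiliser $F'_{b}$ (using that $G(F,F')$ realizes any prescribed $F'$-action at a vertex, \cite[Lem.\ 3.4]{LB-ae}), and symmetrically $\Gamma_\eta^0$ realizes all of $F'_{a}$ at $w_k$. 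Since $F'$ is primitive and, outside the degenerate regular case $F=F'$ (in which $\A_\Gamma$ is a point and the statement is vacuous), distinct points have distinct stabilisers, maximality of point-stabilisers gives $\langle F'_a,F'_b\rangle = F'$. Hence at every geodesic vertex $w_k$ the stabiliser of $w_k$ in $K$ surjects onto $F'$ via $g\mapsto\sigma(g,w_k)$, so $K$ permutes the edges at $w_k$ transitively. An induction on the distance to $L$ then propagates both the full local $F'$-action and transitivity on each part of the bipartition to every vertex, giving edge-transitivity of $K$. This yields $\mathrm{Fix}(e)\subseteq K$ for all $e$, hence $\Gamma^+\leq K$, and finally $K=\Gamma^+=G(F,F')^\ast$.

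I expect the main obstacle to be making the global transitivity bootstrap airtight: full local $F'$-action is produced only at the geodesic vertices $w_k$, and one must carefully conjugate and propagate it, respecting the bipartition that $K$ preserves, to reach an arbitrary vertex and hence all edges. The preliminary reduction identifying every URS element with an end-fixator $\Gamma_\xi^0$ also deserves care, since a priori $\A_\Gamma$ is only the closure of the end-fixators over the continuity locus; the point that no hyperbolic element can lie in a Chabauty limit of such elliptic subgroups is precisely what closes that gap.
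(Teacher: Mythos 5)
Your proof is correct in substance but reaches the conclusion by a different route than the paper. Both arguments use primitivity in the same way: at a vertex $v$ on the geodesic $[\eta,\xi]$, the two geodesic edges carry distinct colours $a,b$, elements of $\Gamma_\xi^0$ and $\Gamma_\eta^0$ realize the point stabilizers $F'_b$ and $F'_a$ as local permutations at $v$, and $\langle F'_a,F'_b\rangle=F'$ by maximality. The divergence is in what one does with this. The paper introduces the auxiliary subgroups $K(v)$ (elements fixing $v$ with local permutations in $F$ elsewhere), shows $K(v)\leq\langle\Gamma_\xi^0,\Gamma_\eta^0\rangle$ by decomposing each element of $K(v)$ as a product of half-tree fixators, and then concludes in one line by citing \cite[Cor.\ 3.10]{LB-ae}, which says that $K(v)$ and $K(v')$ for neighbouring $v,v'$ already generate $G(F,F')^\ast$. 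You instead prove that $\langle\Gamma_\xi^0,\Gamma_\eta^0\rangle$ acts transitively on edges and conjugate the fixators of geodesic edges (which lie in $\langle\Gamma_\xi^0,\Gamma_\eta^0\rangle$ by Remark \ref{rmq-indep}) around the whole tree; you also identify $G(F,F')^+=G(F,F')^\ast$ via Proposition \ref{prop-explicit-G/G^+} and $r=1$. Your route is more self-contained (it does not import the generation result from \cite{LB-ae}) at the cost of the transitivity bootstrap, which you only sketch; it does go through: full local $F'$-action at a vertex $v$ makes all neighbours of $v$ lie in one orbit of the stabilizer, conjugation transports full local action along the orbit, and induction on the distance to the edge $\{w_0,w_1\}$ shows every vertex lies in the orbit of $w_0$ or $w_1$ with full local action, whence edge-transitivity. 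Note that constructing, for each $\sigma\in F'_b$, an element of $\Gamma_\xi^0$ realizing $\sigma$ at $w_k$ uses semi-regularity of $F$ (local permutations in $F$ fixing a colour are trivial, so the element fixes the whole half-tree beyond $e_k$); this is fine in the intended application but worth making explicit.

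One small logical slip in your preliminary reduction: to get $\limsup\Gamma_{\xi_i}^0\subseteq\Gamma_\xi^0$ it is not enough to exclude hyperbolic elements from the limit; a priori an elliptic element could fix half-trees towards each $\xi_i$ while fixing only a ray, and no half-tree, towards $\xi$. For $G(F,F')$ with $F$ semi-regular this cannot happen (an element fixing a ray has eventually trivial local permutations along it, hence fixes a half-tree), but as written the argument does not address it. Since all you actually need is that every element of $\A_\Gamma$ is of the form $\Gamma_\xi^0$, which is exactly \cite[Prop.\ 4.28]{LBMB} and is what the paper cites, this does not affect the proof.
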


\begin{proof}
Write $\Gamma = G(F,F')$. Recall from \cite[Prop.\ 4.28]{LBMB} that the Furstenberg URS of $\Gamma$ consists of subgroups $\Gamma_\xi^0$, $\xi \in \partial T_d$, where $\Gamma_\xi^0$ is the set of elements acting trivially on a neighbourhood of $\xi$. Given $\xi \neq \eta \in \partial T_d$, we show that the subgroup $\Lambda$ generated by $\Gamma_\xi^0$ and $\Gamma_\eta^0$ must be equal to $G(F,F')^*$. 

Take a vertex $v$ on the geodesic from $\xi$ to $\eta$, let $e_1,e_2$ be the edges containing $v$ and pointing towards $\xi$ and $\eta$, and $a,b$ the colors of $e_1,e_2$. Denote by $K(v)$ the subgroup of $\Gamma$ consisting of elements $\gamma$ fixing $v$ and such that $\sigma(\gamma,w) \in F$ for every $w \neq v$. Since $F'$ is primitive, $F'$ is generated by the point stabilizers $F_a'$ and $F_b'$. This implies that every element of $K(v)$ may be written as a product of elements fixing either the half-tree defined by $e_1$ containing $\xi$, or the half-tree defined by $e_2$ containing $\eta$, so that $K(v) \leq \Lambda$. Since $v$ was arbitrary, we also have $K(v') \leq \Lambda$ for $v'$ a neighbour of $v$ on the geodesic $[\xi,\eta]$. The conclusion now follows since for two neighbouring vertices $v,v'$, the subgroups $K(v),K(v')$ always generate $G(F,F')^*$ \cite[Cor.\ 3.10]{LB-ae}. 
\end{proof}

\begin{proof}[Proof of Proposition \ref{prop-am-wcoam}]
Write $\Gamma = G(F,F')$, and let $\Lambda$ be a subgroup of $\Gamma$ that is non-amenable, equivalently whose action of $T_d$ is of general type. By Proposition \ref{prop-reform-w-coam} we have to show that $\Lambda$ fixes no probability measure on any non-trivial $\Gamma$-boundary. Argue by contradiction and assume that $X$ is a non-trivial $\Gamma$-boundary on which $\Lambda$ fixes a probability measure $\mu$. According to Theorem \ref{thm-boundaries-G(F,F')}, we have $\mathcal{S}_\Gamma(X) = \mathcal{A}_\Gamma$. Therefore by Proposition \ref{prop-GW-1.2} there exist an almost 1-1 extension $\eta: \tilde{X} \rightarrow X$ and a factor map $\pi: \tilde{X} \rightarrow \mathcal{A}_\Gamma$. 

Let $Q \subset \prob(\tilde{X})$ be the set of $\nu$ such that $\eta^* \nu = \mu$, and write $R = \pi^*(Q)$, which is a closed $\Lambda$-invariant subset of $\prob(\mathcal{A}_\Gamma)$. Since the action of $\Lambda$ on $\partial T_d$ is strongly proximal and since $\A_\Gamma$ is a factor of $\partial T_d$ \cite[Prop.\ 2.10-4.28]{LBMB}, we deduce that $R$ contains some Dirac measures. Let $H \in \A_\Gamma$ such that there is $\nu \in \prob(\tilde{X})$ with $\eta^* \nu = \mu$ and $\pi^* \nu = \delta_H$. Such a measure $\nu$ must be supported in the set of $(x,H) \in \tilde{X}$, and it follows that $\mu$ is supported in the set of $H$-fixed points in $X$ (because $(x,H) \in \tilde{X}$ implies that $H \leq G_x$ by upper semi-continuity of the stabilizer map). But since $\Lambda$ does not fix any point in $\A_\Gamma$, we may find another $H' \in \A_\Gamma$ such that $\delta_{H'} \in R$, so that the same argument shows that $H'$ also acts trivially on the support of $\mu$. By Lemma \ref{lem-prim-2urs-all} the subgroups $H,H'$ generate $G(F,F')^*$, which is of index two in $\Gamma$. Therefore any point in the support of $\mu$ has a $\Gamma$-orbit of cardinality at most two, which is absurd since $\Gamma$ acts minimally on $X$ and $X$ is non-trivial by assumption.
\end{proof}

\begin{rmq}
Assume that $F$ is regular and $F'$ is primitive, and write $\Gamma = G(F,F')$. Let $\Lambda \leq \Gamma$ be a subgroup generated by two hyperbolic elements with sufficiently far apart axis. Then $\Lambda$ is not co-amenable in $\Gamma$ (see e.g.\ the argument in the proof of Theorem 2.4 in \cite{CaMo-Iso-disc}), but $\Lambda$ is weakly co-amenable in $\Gamma$ by Proposition \ref{prop-am-wcoam}.
\end{rmq}

\begin{rmq}
We mention that when $F'$ is primitive, following the proof of Corollary 4.14 from \cite{LBMB} (with minor modifications), one could prove that every non-trivial $G(F,F')$-boundary factors onto $\partial T_d$. This would provide an alternative proof of Proposition \ref{prop-am-wcoam}.
\end{rmq}

\subsection{Lattice embeddings of the groups $G(F,F')$} \label{subsec-g(f,f')-lat-em}

In this section we apply previous results of the article to the family of groups $G(F,F')$ and deduce some properties of general locally compact groups containing a group $G(F,F')$ as a lattice.

\begin{rmq}
As mentioned earlier, examples of lattice embeddings for the groups $G(F,F')$ are described in \cite{LB-irr-wreath}, and maybe it is worth pointing out that instances of lattice embeddings of these groups also appeared in \cite{LB-ae}. Indeed under appropriate assumptions on permutation groups $F \leq F',H \leq H'$, the inclusion of $G(F,F')$ in $G(H,H')$ has discrete and cocompact image \cite[Cor.\ 7.4]{LB-ae}.
\end{rmq}

\begin{cor} \label{cor-g(f,f')-lat-obstruc}
Assume that $F$ is regular, and that $F'$ is generated by its point stabilizers. Let $G$ be a locally compact group containing $G(F,F')$ as a lattice. Then the conclusions of Corollary \ref{cor-intro-tree-disc} hold.
\end{cor}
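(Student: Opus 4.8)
The strategy is to verify that $\Gamma = G(F,F')$, acting on $T = T_d$, satisfies all the hypotheses of Corollary \ref{cor-intro-tree-disc}, after which conclusions (a), (b), (c) follow by direct application. Three things need to be checked: that the $\Gamma$-action on $T_d$ is minimal and of general type, that all end-stabilizers are non-trivial and amenable, and that $\Gamma$ is virtually simple.

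First I would record two elementary consequences of the standing hypotheses. Since $F$ is regular it is in particular semi-regular, so $\Gamma = G(F,F')$ is a discrete finitely generated group, and $F$ is transitive on $\Omega$. Moreover $F \neq F'$: as $F'$ is generated by its point stabilizers and $F' \neq 1$ (it acts on $d \geq 3$ letters), at least one point stabilizer $F'_a$ is non-trivial, whereas a regular group has only trivial point stabilizers. This fact will be used twice below.

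For minimality and general type I would use that $\Gamma$ contains $U(F)$ and acts transitively on the vertices and edges of $T_d$, a standard feature of these groups (see \cite{BM-IHES,LB-ae}): vertex-transitivity gives minimality and rules out finite orbits in $T_d$, while transitivity of $U(F)$ on $\partial T_d$ for $F$ transitive (Burger--Mozes) forces $\Gamma$ to act transitively, hence without finite orbit, on the infinite set $\partial T_d$. Thus the action is minimal and of general type. For end-stabilizers, the recalled structure of $G(F,F')$ for semi-regular $F$ gives that stabilizers of ends are (locally finite)-by-cyclic, hence amenable. Their non-triviality is exactly where the assumption on $F'$ enters concretely: given $\xi \in \partial T_d$, pick an edge $e = \{v,w\}$ on the ray to $\xi$ with $w$ on the $\xi$-side, set $a = c(e)$, and choose $1 \neq \tau \in F'_a$; the automorphism $\gamma$ with $\sigma(\gamma,v) = \tau$ and all other local permutations trivial lies in $\Gamma$, fixes pointwise the half-tree containing $\xi$ (since $\tau(a) = a$), and is non-trivial, so $1 \neq \gamma \in \Gamma_\xi$.

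Finally, virtual simplicity is precisely Proposition \ref{prop-g-virt-simpl}: $F$ is transitive (being regular) and $F'$ is generated by its point stabilizers, so $G(F,F')$ has a simple subgroup of finite index. With all the hypotheses of Corollary \ref{cor-intro-tree-disc} verified, its conclusions follow. The step requiring the most care is the non-triviality of \emph{all} end-stabilizers, which is exactly the point where the hypothesis that $F'$ is generated by — hence has non-trivial — point stabilizers is essential; this is precisely the feature distinguishing these examples from the Burger--Mozes lattices in products of trees, for which some end-stabilizers are trivial.
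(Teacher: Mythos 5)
Your proof takes essentially the same route as the paper: the paper's own proof is a one-liner observing that the hypotheses on $F,F'$ give virtual simplicity of $G(F,F')$ via Proposition \ref{prop-g-virt-simpl}, so that Corollary \ref{cor-intro-tree-disc} applies; the remaining hypotheses (discreteness, minimality, general type, amenability and non-triviality of end stabilizers) are among the properties of $G(F,F')$ recalled in \S\ref{subsec-background-g(f,f')}, which you simply verify more explicitly. One local inaccuracy in your verification of non-triviality of $\Gamma_\xi$: an automorphism $\gamma$ with $\sigma(\gamma,v)=\tau\neq 1$ cannot have \emph{all} other local permutations trivial. Indeed if $\tau$ moves a colour $b$, then the local permutation at the neighbour $u$ of $v$ across the edge of colour $b$ must send $b$ to $\tau(b)\neq b$, so it is not the identity. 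The correct statement (Lemma 3.4 of \cite{LB-ae}, in the spirit of Remark \ref{rmq-indep}) is that one can realize $\sigma(\gamma,v)=\tau$ with $\sigma(\gamma,u)\in F$ for every $u\neq v$, taking these to be trivial on the half-tree containing $\xi$ --- this is exactly where $\tau(a)=a$ is used. Such a $\gamma$ still lies in $G(F,F')$, is non-trivial, and fixes the half-tree containing $\xi$, so your conclusion stands; the slip is cosmetic and the overall argument is correct.
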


\begin{proof} 
The assumptions on $F,F'$ imply that $G(F,F')$ is virtually simple by Proposition \ref{prop-g-virt-simpl}, so Corollary \ref{cor-intro-tree-disc} applies.
\end{proof}

\begin{rmq}
In the setting of Corollary \ref{cor-g(f,f')-lat-obstruc}, although $G(F,F')$ cannot be a lattice in a product, it happens that there exist non-discrete groups $G_1,G_2$ such that $G(F,F')$ embeds as a discrete subgroup of $G_1 \times G_2$ with injective and dense projection to each factor. For instance if $F_1,F_2$ are permutation groups such that $F \lneq F_i \leq F'$ and we set $G_i = G(F_i,F')$, then the diagonal embedding of $G(F,F')$ in $G_1 \times G_2$ has this property as soon as $F_1 \cap F_2 = F$. See \cite[Lem.\ 3.4 and \S 7.1]{LB-ae}.
\end{rmq}

\bibliographystyle{abbrv}
\bibliography{biburs}

\end{document}